\numberwithin{equation}{section}
\newtheorem{definition}{Definition}[section]
\newtheorem{theorem}{Theorem}[section]
\newtheorem{corollary}[theorem]{Corollary}
\newtheorem{lemma}[theorem]{Lemma}
\newtheorem{proposition}[theorem]{Proposition}
\newtheorem{remark}[theorem]{Remark}
\newcommand{\bke}[1]{\left ( #1 \right )}
\newcommand{\bkt}[1]{\left [ #1 \right ]}
\newcommand{\bket}[1]{\left \{ #1 \right \}}
\newcommand{\norm}[1]{\left \| #1 \right \|}
\newcommand{\abs}[1]{\left | #1 \right |}
\newcommand\De{\Delta}
\newcommand{\R}{\mathbb{R}}
\renewcommand{\div}{\mathop{\rm div}\nolimits}
\newcommand{\pd}{\partial}
\newcommand{\nb}{\nabla}
\renewcommand{\bar}[1]{\overline{#1}}
\newcommand{\EQ}[1]{\begin{equation}\begin{split} #1 \end{split}\end{equation}}
\newcommand{\EQN}[1]{\begin{equation*}\begin{split} #1 \end{split}\end{equation*}}
\newcommand{\EN}[1]{\begin{enumerate} #1 \end{enumerate}}
\DeclarePairedDelimiter{\oldnormaux}{\bracevert}{\bracevert}
\NewDocumentCommand{\oldnorm}{som}{%
  \IfBooleanTF{#1}
    {\oldnormaux*{#3}}
    {\IfNoValueTF{#2}
       {\oldnormaux*{\vphantom{dq}#3}}
       {\oldnormaux[#2]{#3}}%
    }%
}
\begin{document}
\title[Inviscid-Incompressible-Phenotype Limits in Tissue Growth Models]{
Joint inviscid, incompressible, and continuous phenotype limit in nonlocal models of tissue growth
}

\author[C.-C. Lai]{Chen-Chih Lai}
\address{\noindent
Department of Mathematics,
Columbia University, New York, NY, 10027, USA}
\email{cclai.math@gmail.com}

\author[H. Lu]{Hongbo Lu}
\address{\noindent
Department of Applied Mathematics and Statistics,
Stony Brook University, Stony Brook, NY, 11794, USA}
\email{hongbo.lu@stonybrook.edu}

\begin{abstract}
We rigorously derive the joint limit of vanishing viscosity, singular pressure, and discrete-to-continuous phenotype structure in a class of tissue growth models. Starting from a viscoelastic (Brinkman-type) system describing multiple interacting phenotypes, where pressure depends nonlinearly on total cell density, we establish convergence to an incompressible Darcy-type model with a continuous phenotypic structure. Our analysis builds upon recent advances on the Brinkman-to-Darcy limit and incompressible transitions by David, Jacob, and Kim [arXiv:2503.18870], and on hydrodynamic limits for phenotype-structured populations by Debiec, Mandal, and Schmidtchen [J. Differential Equations 2025].
A key novelty lies in combining all three singular limits simultaneously, under uniform a priori estimates, compactness in space-phenotype-time, and a generalized entropy-dissipation structure. This provides a unified framework for modeling constrained tissue growth with mechanical feedback and phenotypic plasticity.
\end{abstract}
\maketitle

\tableofcontents

\section{Introduction}\label{sec:intro}
Mathematical models of tissue growth have become an essential tool to understand morphogenesis, tumor evolution, and pattern formation in biological systems. These models often combine mechanical interactions, modeled via pressure-driven flows, with phenotypic heterogeneity, described by structured population densities across discrete or continuous traits. In recent years, significant attention has been devoted to studying the asymptotic limits of such models, particularly in relation to:
\EN{
\item {\bf Viscosity effects}, where the tissue behaves either as a viscoelastic medium (Brinkman flow) or a porous medium (Darcy flow);
\item {\bf Incompressibility constraints}, reflecting a congested environment with maximal packing density;
\item {\bf Phenotypic adaptation}, where a discrete set of traits approximates a continuous spectrum.
}

Each of these asymptotic regimes--inviscid, incompressible, and continuous phenotype--has been previously studied in isolation or in limited combinations.
In mechanics-driven growth models, three asymptotic mechanisms have emerged as central:
(1) the \emph{inviscid (Brinkman to Darcy) limit} ($\nu\to0$); (2) the \emph{stiff-pressure (incompressible/Hele–Shaw) limit} ($k\to\infty$); and (3) \emph{phenotype aggregation} from finitely many types to a continuum of traits ($N\to\infty$).

The \emph{inviscid limit}, which connects viscoelastic Brinkman-type tissue models to Darcy-type porous medium models, was rigorously established in several settings, notably for linear and power-law pressures \cite{ES-JMFM2025} and for systems with nonlinear degenerate diffusion \cite{DDMS-SIMA2024}. These results build on compactness and entropy structures that are robust to modeling variations, and have been extended to nonlocal and cross-diffusive settings \cite{DS-NA2025}.

The \emph{incompressible limit}, classically derived by letting the pressure exponent go to infinity in porous medium laws, yields Hele–Shaw-type free boundary models \cite{Perthame2014,Perthame2015, PQTV-arXiv2014, AKY-IOP2014}, with applications to congestion in tumor and crowd models \cite{Maury2010,Kim2018}. Identifying the limiting problem typically requires viscosity-solution arguments \cite{CIL-BAMS1992,Mellet-JFA2017,KimPozar-TAMS2018}.

The paper \cite{DJK-arXiv2025} is the first to rigorously establish the joint inviscid and incompressible limit for a class of congestion-averse growth models. Their analysis develops a novel family of energy evolution equations (EEEs) that remain valid across degenerating viscosity and pressure laws, circumventing the loss of compactness that typically arises in these singular limits. The resulting limit is a Hele-Shaw-type free boundary problem, derived directly from a compressible Brinkman system with general convex internal energies. Importantly, this framework can handle a variety of nonlinear pressure laws, and the analysis avoids relying on the regularity of the pressure or velocity fields—previously a major obstacle in such joint limits.

Separately, the \emph{continuous phenotype limit} has been developed in the context of selection–mutation and heterogeneity frameworks \cite{LLP-KRM2017,Lorenzi2016-BiologyDirect,LorenziPouchol-Nonlinearity2020,VillaChaplainLorenzi-SJAM2021}. Most recently, \cite{DMS-JDE2025} established the passage from finitely many phenotypes to a continuum of traits under fixed viscosity, as well as the joint limit of vanishing viscosity and infinite species count, yielding a Darcy-type model with continuous phenotypic structure.


Our contribution is to \emph{unify all three axes}—inviscid, incompressible, and continuous phenotype—into a single rigorous framework. Specifically, we study the \emph{triple limit} $(N,k,\nu)$, passing to a continuum of phenotypes while simultaneously imposing \emph{hard congestion} and \emph{vanishing Brinkman regularization}. This requires reconciling the free-boundary/viscosity approach used in stiff-pressure limits with the compactness and dissipation mechanisms from inviscid Brinkman to
Darcy limits, all within phenotype-structured transport–reaction systems that may include nonlocal and anisotropic couplings \cite{Perthame2014,Mellet-JFA2017,ES-JMFM2025,DDMS-SIMA2024,DS-NA2025}.

Conceptually, our analysis clarifies interactions and possible commutation among these singular limits, positioning the resulting continuum-trait \emph{Darcy–Hele–Shaw} dynamics as a unifying endpoint across viscoelastic, phenotypic, and congestion regimes \cite{Vauchelet-arXiv2017,Evans2010,David-CPAM2024,David-JDE2021}. This provides a robust foundation for understanding how phenotypic plasticity interacts with mechanical saturation in constrained biological tissues.

\subsection{Mathematical model}
Fix $T>0$. Denote $Q_T=\R^d\times[0,T]$. 
We introduce the model presented in \cite{Tang-CAMB2013, Perthame2015, DMS-JDE2025}:
\begin{equation}\label{eq-1}
    \begin{cases}
      \partial_t n_{N,k,\nu}^{(i)} = \div(n_{N,k,\nu}^{(i)}\nb W_{N,k,\nu}) + n_{N,k,\nu}^{(i)} G_i(p_{N,k,\nu})\ \text{ in $Q_T$},\\
      p_{N,k,\nu} = \frac{k}{k-1} \bke{\frac{1}{N} \sum_{i=1}^N n_{N,k,\nu}^{(i)}}^{k-1} = \Pi_k(n_{N,k,\nu}^{(1)},...,n_{N,k,\nu}^{(N)})\ \text{ in $Q_T$},\\
      W_{N,k,\nu}-\nu \Delta W_{N,k,\nu} = p_{N,k,\nu}\ \text{ in $Q_T$},
    \end{cases}
\end{equation}
coupled with the compactly supported initial data
\[
n_{N,k,\nu}^{(i)}\big|_{t=0}\in L^1(\R^d)\cap L^\infty(\R^d)
\]
Here, $n_{N,k,\nu}^{(i)},p_{N,k,\nu},W_{N,k,\nu}$ are non-negative functions of spatial variable $x$ and time variable $t$, $n_{N,k,\nu}^{(i)}$ stands for density of tumor cells with different phenotypes, $p_{N,k,\nu}$ stands for pressure of tumor cells, and $W_{N,k,\nu}$ is a potential function that governs the change in density of tumor cells. $\nu>0$ is a fixed parameter describing friction, $k>2$ is a fixed parameter describing the stiffness of the pressure law, and $G_i(p)$ is a $C^1$ function $G_i: \mathbb{R} \mapsto \mathbb{R}$ such that $G_i'(p) \leq -c < 0$ for all values of $p$ and $G_i(0)>0$ is uniformly bounded from above for all $i$. 
We denote $p_{M_i}$ to be the unique zero of $G_i$, so that $G_i(p_{M_i}) = 0$. (Reference can be found in \cite{Kim2018})

To study the case where $N \to \infty$, we will need some additional assumptions on the collection of these functions $G_i$. In particular, we assume that there is a $C^1$ function $G(p,a): \mathbb{R}^+ \times [0,1] \to \mathbb{R}$, so that $\frac{\partial G}{\partial p} \leq -c <0$ for all values of $p,a$, and $G(0,a)$ is uniformly bounded from above for all $a$. Here $a$ is a continuous parameter that denotes the phenotypes of each cancer cell species in the tumor. And for each given $N$, we have $G_i(p) = G(p,\frac{i}{N})$ for $1 \leq i \leq N$. Under this new notation, we can define a function $n_{N,k,\nu}(x,t,a)$ for each value of $a$ according to equation \eqref{eq-1} with the assumption that for all values of $a$, the initial condition $n_{N,k,\nu}(x,0,a)$ also satisfies $\norm{ \frac{\partial n(x,0,a)}{\partial a} }_{L^1(\mathbb{R}^d)} \leq M$ for some $M$ independent of $a$, and write each $n_{N,k,\nu}^{(i)}(x,t)$ as 
\EQ{\label{eq-def-n}
n_{N,k,\nu}^{(i)}(x,t) =: n_{N,k,\nu}(x,t,\frac{i}{N}).
}

For each finite $N$, we consider 
\EQ{\label{eq-def-nbar}
\bar{n}_{N,k,\nu} = \frac{1}{N}\sum_{i=1}^N n_{N,k,\nu}^{(i)}.
}
For sake of convenience, denote $F_{N,k,\nu}^{(i)} = \frac{n_{N,k,\nu}^{(i)}}{\sum_{i=1}^N n_{N,k,\nu}^{(i)}}$ at where $\bar{n}_{N,k,\nu}>0$, and $F_{N,k,\nu}^{(i)} = 0$ elsewhere. Note that the equation for $\bar{n}_{N,k,\nu}$ reads:
\EQ{\label{eq-n}
\partial_t \bar{n}_{N,k,\nu} = \div(\bar{n}_{N,k,\nu} \nabla W_{N,k,\nu}) + \bar{n}_{N,k,\nu} \sum_{i=1}^N F_{N,k,\nu}^{(i)} G_i(p_{N,k,\nu}).
}

To begin, we recall the definition of a weak solution to \eqref{eq-1} and the existence theorem established in \cite{DMS-JDE2025}.

\begin{definition}
A pair $(n_{N,k,\nu}^{(i)}, p_{N,k,\nu}, W_{N,k,\nu})_{i=1}^N$ is a weak solution of \eqref{eq-1} with nonnegative initial data $n_{N,k,\nu}^{(i)}\big|_{t=0}\in L^1(\R^d)\cap L^\infty(\R^d)$ if $p_{N,k,\nu} = \Pi_{k}(n_{N,k,\nu}^{(1)},\ldots,n_{N,k,\nu}^{(N)})$ almost everywhere, and
\EQN{
\int_0^T \int_{\R^d} &n_{N,k,\nu}^{(i)} \pd_t\varphi dxdt - \int_0^T \int_{\R^d} n_{N,k,\nu}^{(i)}\nb W_{N,k,\nu}\cdot\nb\varphi dxdt\\
&= - \int_0^T \int_{\R^d} n_{N,k,\nu}^{(i)} G_i(\Pi_k(n_{N,k,\nu}^{(1)},\ldots,n_{N,k,\nu}^{(N)})) dxdt - \int_{\R^d} \bke{n_{N,k,\nu}^{(i)}\big|_{t=0}}(x)\varphi(x,0) dx,
}
for any test function $\varphi\in C^\infty_c(Q_T)$, and also
\[
W_{N,k,\nu} - \nu\De W_{N,k,\nu} = \Pi_k(n_{N,k,\nu}^{(1)},\ldots,n_{N,k,\nu}^{(N)}),
\]
almost everywhere in $Q_T$.
\end{definition}

\begin{lemma}[Existence of weak solutions \text{\cite{DJK-arXiv2025, DMS-JDE2025}}]
\label{lem:weak-solution}
There exists a weak solution $(n_{N,k,\nu}^{(i)}, W_{N,k,\nu})_{i=1}^N$ of the system \eqref{eq-1} such that 
\EN{
\item $n_{N,k,\nu}^{(i)}\in L^q(Q_T)$, for $1\le q\le\infty$, uniformly in $N,k,$ and $\nu$, 

\item $p_{N,k,\nu}=\Pi_k(n_{N,k,\nu}^{(1)},\ldots,n_{N,k,\nu}^{(N)})\in L^1(0,T;L^1(\R^d))$, uniformly in $N,k,$ and $\nu$,

\item $\nb W_{N,k,\nu}\in L^2(Q_T)$, uniformly in $k$ and $\nu$.
}
\end{lemma}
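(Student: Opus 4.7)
The plan is to combine the construction of \cite{DMS-JDE2025}, which handles the phenotype-structured transport-reaction framework, with the uniform-in-$(k,\nu)$ estimates of \cite{DJK-arXiv2025} on the joint Brinkman-to-Darcy and stiff-pressure limit. For fixed $(N,k,\nu)$, I would add artificial diffusion $\ve\De n^{(i)}$ to each phenotype equation in \eqref{eq-1}. Given any $(n^{(i)})\in L^2(Q_T)$, the Brinkman relation $W-\nu\De W=\Pi_k(n^{(1)},\ldots,n^{(N)})$ is uniquely solvable in $H^2(\R^d)$ by standard elliptic theory, and each regularized phenotype equation $\pd_t n^{(i)}-\ve\De n^{(i)}=\div(n^{(i)}\nb W)+n^{(i)}G_i(p)$ becomes linear in $n^{(i)}$; a Schauder fixed-point argument in $L^2(Q_T)$ gives an $\ve$-level solution, and $\ve\to 0$ is passed via Aubin-Lions compactness, reproducing the scheme of \cite{DMS-JDE2025}.

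For the uniform $L^q$ and pressure bounds, the key ingredient is a $k$- and $\nu$-uniform $L^\infty$ bound on $\bar n_{N,k,\nu}$, obtained via the Perthame-Quir\'os-V\'azquez saturation mechanism applied to \eqref{eq-n}: as soon as $\bar n\ge 1$ one has $p\ge k/(k-1)$, so the hypothesis $G_i'\le -c$ forces $\sum_i F_{N,k,\nu}^{(i)}G_i(p)<0$, yielding $\norm{\bar n(t)}_{L^\infty}\le\max\bket{\norm{\bar n|_{t=0}}_{L^\infty},\,1+C/k}$ uniformly in $\nu$. Each individual phenotype equation is a linear advection-reaction equation with prescribed coefficients, so the maximum principle propagates the $L^\infty$ bound to each $n^{(i)}$ uniformly in $N,k,\nu$; interpolation with an $L^1$ estimate, obtained by integrating \eqref{eq-1} and applying Gr\"onwall with reaction bounded by $\sup_i G_i(0)$, covers every intermediate $q$. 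Since $p=\frac{k}{k-1}\bar n^{k-1}\le e$ uniformly in $k$ on the compactly supported, finite-speed-propagating set $\{\bar n>0\}$, the uniform bound $p\in L^1(0,T;L^1)$ is immediate.

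For the $L^2$ bound on $\nb W$ uniform in $(k,\nu)$ I would derive a generalized entropy-dissipation identity. Testing \eqref{eq-n} against $e'(\bar n)$, with $e$ chosen as the internal energy conjugate to $p$ (so that $\bar n\,e''(\bar n)=p'(\bar n)$, hence $\bar n\,\nb e'(\bar n)=\nb p$) and substituting the Brinkman relation $\nb p=\nb W-\nu\nb\De W$ yields
\[
\frac{d}{dt}\int_{\R^d}e(\bar n)\,dx+\int_{\R^d}\abs{\nb W}^2\,dx+\nu\int_{\R^d}\abs{\De W}^2\,dx=\int_{\R^d}e'(\bar n)\,\bar n\sum_{i=1}^N F_{N,k,\nu}^{(i)}G_i(p)\,dx,
\]
whose right-hand side is controlled uniformly in $(N,k,\nu)$ via the $L^\infty\cap L^1$ bounds on $\bar n$ and boundedness of the $G_i$ on $[0,\norm{p}_{L^\infty}]$. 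Integrating in time then gives the claimed estimate.

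The hard part I expect is the simultaneous $k$- and $\nu$-uniformity of $\norm{\bar n}_{L^\infty}$. Unlike in the pure porous-medium setting, the Brinkman drift $\nb W$ is only $L^2$-controlled a priori, so standard pointwise barrier arguments from the classical incompressible limit do not apply directly, and the phenotype-structured reaction replaces the scalar $G(p)$ by the weighted average $\sum_i F^{(i)}_{N,k,\nu}G_i(p)$. Verifying that the saturation mechanism survives this averaging---which it does, because the $G_i$ share a uniform sign and monotonicity once $p\gec 1$---and choosing $e'(\bar n)$ so that the Brinkman term contributes the nonnegative dissipation $\nu\int\abs{\De W}^2$ on the left-hand side, rather than a potentially bad $\nu$-dependent remainder, are the key technical points that unify the treatment across all three singular limits.
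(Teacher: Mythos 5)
The paper itself does not prove this lemma: it is imported verbatim from \cite{DJK-arXiv2025, DMS-JDE2025}, and the quantitative content is re-derived later as Proposition \ref{prop-1} and Lemma \ref{lem-1}. Measured against those proofs, your construction sketch (artificial viscosity, Schauder fixed point, Aubin--Lions) is consistent with the scheme of \cite{DMS-JDE2025}, and your entropy identity for the $\nabla W$ bound --- testing \eqref{eq-n} with $e'(\bar n)$ where $\bar n e''(\bar n)=p'(\bar n)$, so that the Brinkman relation produces $\int|\nabla W|^2+\nu\int(\Delta W)^2$ on the left and a right-hand side controlled by $\|p\|_{L^\infty(0,T;L^1)}$ --- is essentially Lemma \ref{lem-1} rewritten in energy form (the paper instead integrates the pressure equation \eqref{eq-2} and uses $(p-W)\Delta W=-\nu(\Delta W)^2\le 0$); that part is sound.

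The genuine gaps are in the uniform $L^\infty$ and $L^1$ bounds. First, your saturation claim is wrong as stated: $\bar n\ge 1$ gives $p\ge k/(k-1)\approx 1$, but $G_i'\le -c$ with $G_i(0)>0$ only forces $G_i(p)<0$ once $p$ exceeds the homeostatic value $p_{M_i}$, which can be much larger than $1$; so the reaction term need not be negative at $\bar n\approx 1$, and the asserted bound $\|\bar n(t)\|_{L^\infty}\le\max\{\|\bar n|_{t=0}\|_{L^\infty},1+C/k\}$ does not follow. Moreover, even with the correct threshold, the obstruction you yourself flag --- that at a maximum of $\bar n$ the drift contributes $\bar n\,\Delta W$, which has no sign unless one exploits the Brinkman structure ($\Delta W=(W-p)/\nu$ with $W=K_\nu*p\le\sup p$ by positivity of the kernel) --- is never resolved in your sketch; the paper closes exactly this point by citing the maximum principle of \cite[Lemma 2.1]{Tang-CAMB2013} for the pressure, and then reads off $\bar n=((k-1)p/k)^{1/(k-1)}\le\max(1,C)$. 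Second, your route to $p\in L^1$ via compact support and finite speed of propagation is not justified uniformly in $\nu$: the drift $\nabla W=\nabla K_\nu*p$ is only bounded like $\nu^{-1/2}\|p\|_{L^\infty}$, so the support control degenerates as $\nu\to 0$. It is also unnecessary: since $\|\bar n\|_{L^\infty}^{k-2}$ is uniformly bounded, one has the pointwise inequality $p=\frac{k}{k-1}\bar n^{k-1}\le C\,\bar n$, and the Gr\"onwall $L^1$ bound on $\bar n$ transfers directly to $p$, which is precisely the paper's argument.
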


\subsection{Main results}\label{sec:main-results}

We are now in a position to state our main result. It shows that the family of weak solutions constructed in Lemma \ref{lem:weak-solution} admits, in the joint limit $N\to\infty$, $k\to\infty$, and $\nu\to0$, a subsequence converging to a limit that solves the macroscopic system in the weak sense.

\begin{theorem}\label{thm 2}
For $N\in\mathbb{N}$, $k>2$, $\nu>0$, let $(n_{N,k,\nu}^{(i)}, p_{N,k,\nu}, W_{N,k,\nu})_{i=1}^N$ be the weak solution of the system \eqref{eq-1} derived from Lemma \ref{lem:weak-solution}.
Let $n_{N,k,\nu}$ and $\bar{n}_{N,k,\nu}$ be defined as in \eqref{eq-def-n} and \eqref{eq-def-nbar}, respectively.
Then there exist functions $n=n(x,t,a)$, $\bar{n}=\bar{n}(x,t)$ and $p=p(x,t)$ such that, as $N \to \infty$, $k\to \infty$ and $\nu \to 0$, up to a subsequence,
\EQN{
n_{N,k,\nu}(\cdot,\cdot,a) &\rightharpoonup  n(\cdot,\cdot,a)\ \text{ in $L^2(Q_T)$},\ \text{ uniformly for all $a\in[0,1]$},\\ 
\bar{n}_{N,k,\nu}&\rightharpoonup \bar{n}\ \text{ in $L^2(Q_T)$},\\
W_{N,k,\nu}\text{ and }p_{N,k,\nu} &\to p\ \text{ in $L_{\rm loc}^2(Q_T)$.}
}
Moreover, this limit $[n,\bar{n},p]$ solves the following system of equations in the weak sense:
\begin{equation}
\left\{
\begin{array}{l}
      \partial_t n(x,t,a) = \div(n(x,t,a)\nabla p(x,t)) + n(x,t,a) G(p,a)\ \text{ for $(x,t,a)\in Q_T\times[0,1]$},\\
      \displaystyle \partial_t \bar{n}(x,t) = \div(\bar{n}(x,t)\nabla p(x,t)) + \int_{0}^{1} n(x,t,a) G(p,a)\, da\ \text{ for $(x,t)\in Q_T$},\\
      \displaystyle
      p(x,t)\bke{\Delta p(x,t) + \int_{0}^{1} n(x,t,a) G(p,a)\, da} = 0\ \text{ for $(x,t)\in Q_T$.}
    \end{array}
    \right.
\end{equation}
\end{theorem}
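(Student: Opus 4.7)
The argument proceeds in three stages: (a) derive a priori estimates uniform in $(N,k,\nu)$; (b) extract convergent subsequences and upgrade the convergence of the pressure/potential pair to strong $L^2_{\rm loc}(Q_T)$; (c) pass to the limit in each equation and derive the Hele--Shaw complementarity condition. For Step (a), beyond the bounds in Lemma \ref{lem:weak-solution} I would establish: (i) an $L^2(0,T;\dot H^1(\R^d))$ bound on $p_{N,k,\nu}$, obtained by testing \eqref{eq-n} against $p_{N,k,\nu}$ and exploiting the identity $\bar n_{N,k,\nu}\nb p_{N,k,\nu}=\nb\bar n_{N,k,\nu}^k$ together with the Brinkman equation, in the spirit of the energy evolution equations of \cite{DJK-arXiv2025} which yield $k$- and $\nu$-independent constants; (ii) a time-regularity estimate (e.g.\ $\pd_t p_{N,k,\nu}\in L^2(0,T;H^{-1})$ or BV-in-time) via an Aronson--B\'enilan inequality adapted to the viscoelastic regularization; (iii) an $a$-regularity bound $\sup_{a\in[0,1]}\|\pd_a n_{N,k,\nu}(\cdot,t,a)\|_{L^1_x}\le C(T)$, obtained as in \cite{DMS-JDE2025} by differentiating \eqref{eq-1} in $a$ and applying Gronwall to propagate the assumed initial bound $\|\pd_a n(\cdot,0,a)\|_{L^1}\le M$.

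For Step (b), the weak limits $n_{N,k,\nu}(\cdot,\cdot,a)\rightharpoonup n(\cdot,\cdot,a)$ and $\bar n_{N,k,\nu}\rightharpoonup\bar n$ in $L^2(Q_T)$ follow from Lemma \ref{lem:weak-solution} and reflexivity, while uniformity in $a\in[0,1]$ comes from the equicontinuity supplied by (iii) via an Arzel\`a--Ascoli argument in $C([0,1];L^2(Q_T)_{\rm weak})$. Aubin--Lions combined with (i)--(ii) upgrades $p_{N,k,\nu}\to p$ to strong $L^2_{\rm loc}(Q_T)$. The inviscid convergence then follows by testing the Brinkman equation against $W_{N,k,\nu}-p_{N,k,\nu}$:
\EQN{
\norm{W_{N,k,\nu}-p_{N,k,\nu}}_{L^2(Q_T)}^2 = \nu\int_{Q_T}\nb W_{N,k,\nu}\cdot\nb(p_{N,k,\nu}-W_{N,k,\nu})\,dx\,dt \lec \nu,
}
where the right-hand bound uses (i) together with the $L^2$ bound on $\nb W_{N,k,\nu}$ from Lemma \ref{lem:weak-solution}. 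Hence $W_{N,k,\nu}\to p$ strongly in $L^2_{\rm loc}(Q_T)$ as well.

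For Step (c), I would decompose the transport flux as $n_{N,k,\nu}\nb W_{N,k,\nu}=n_{N,k,\nu}\nb p_{N,k,\nu}+n_{N,k,\nu}\nb(W_{N,k,\nu}-p_{N,k,\nu})$: the second term vanishes distributionally because $\nb(W-p)=\nu\nb\De W$ is $O(\sqrt{\nu})$ in $H^{-1}(Q_T)$, while the first term passes to the limit once strong convergence of one factor is secured by the EEE framework. The reaction term $\frac1N\sum_i G_i(p_{N,k,\nu})n^{(i)}_{N,k,\nu}$ is rewritten as $\int_0^1 G(p_{N,k,\nu},a)\td n_{N,k,\nu}(x,t,a)\,da$ with $\td n$ the piecewise-constant-in-$a$ extension, and converges to $\int_0^1 G(p,a)n(x,t,a)\,da$ by the strong convergence of $p$, the $C^1$ dependence of $G$, the weak convergence of $n(\cdot,\cdot,a)$, and the equicontinuity from (iii) (recognizing the sum as a Riemann sum). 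Finally, the complementarity relation is obtained by observing that $p_{N,k,\nu}=\frac{k}{k-1}\bar n_{N,k,\nu}^{k-1}$ together with uniform $L^\infty$ bounds force $\bar n\le 1$ and $p(\bar n-1)=0$ in the limit; multiplying the limiting $\bar n$-equation by $p$ and using $p\bar n=p$ then gives $p(\De p+\int_0^1 nG\,da)=0$.

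The principal obstacle is item (i) in Step (a): producing an $L^2_t\dot H^1_x$ bound on $p_{N,k,\nu}$ that is simultaneously uniform in all three degenerating parameters. Each limit destroys a different compactness mechanism---viscous regularization vanishes as $\nu\to 0$, the pressure law stiffens as $k\to\infty$, and the phenotype quadrature loses resolution as $N\to\infty$---so the generalized entropy--dissipation structure must be calibrated so that cancellations survive all three regimes at once. A secondary subtlety is identifying the limit of $n_{N,k,\nu}\nb p_{N,k,\nu}$ in Step (c), which requires strong convergence of one factor; this is where the compactness technology of \cite{DJK-arXiv2025}, extended to accommodate the continuous phenotype integral as in \cite{DMS-JDE2025}, plays its decisive role.
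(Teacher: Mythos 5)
There is a genuine gap, and it sits exactly where you flag it: items (i) and (ii) of your Step (a) are not established, and the rest of your argument (Aubin--Lions for $p_{N,k,\nu}$, the $O(\nu)$ bound on $\norm{W_{N,k,\nu}-p_{N,k,\nu}}_{L^2}^2$ via $\nu\iint\nb W\cdot\nb(p-W)$, and the passage to the limit in $n\nb p$) all presuppose a uniform $L^2_t\dot H^1_x$ bound on the \emph{pressure} itself plus time compactness. In the Brinkman system the velocity is $-\nb W_{N,k,\nu}$, not $-\nb p_{N,k,\nu}$, and the only gradient quantity controlled uniformly in $(N,k,\nu)$ is $\nb W_{N,k,\nu}\in L^2(Q_T)$ (Lemma \ref{lem-1}); testing \eqref{eq-n} with $p_{N,k,\nu}$ does not produce a $k$-uniform bound on $\nb p_{N,k,\nu}$, and an Aronson--B\'enilan estimate is not available for the viscoelastic regularization. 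The paper explicitly identifies this lack of compactness for $p_{N,k,\nu}$, $W_{N,k,\nu}$, $\bar n_{N,k,\nu}$ as the main difficulty, and its route avoids your (i)--(ii) entirely: $\norm{W-p}_{L^2}=O(\sqrt\nu)$ is obtained from the $L^1$ bound on $-p_{N,k,\nu}\De W_{N,k,\nu}$ (Lemmas \ref{lem-1}--\ref{lem-2}), and strong convergence of $W_{N,k,\nu}$, $\nb W_{N,k,\nu}$, $p_{N,k,\nu}$ is extracted from the energy-evolution/entropy-dissipation framework of \cite{DJK-arXiv2025} (Lemmas \ref{lem-4}--\ref{lem-7}), combined with the monotonicity comparison of Lemma \ref{lem-8} (using that $G$ is decreasing), an energy-identity comparison with the limit (Lemma \ref{lem-9}: weak convergence plus convergence of norms), and a Moreau--Yosida argument (Lemma \ref{lem-10}). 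None of this machinery, which is what actually carries the triple limit, appears in your plan; deferring it as ``the principal obstacle'' leaves the proof incomplete at its decisive step.

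A second, smaller gap is your derivation of the complementarity relation. Writing $p(\bar n-1)=0$ and then ``multiplying the limiting $\bar n$-equation by $p$'' requires pairing $p$ with $\pd_t\bar n$ and with $\div(\bar n\nb p)$ at a regularity level you have not secured (and strong compactness of $\bar n_{N,k,\nu}$ is precisely what is missing). The paper instead derives complementarity at the level of the approximations: Lemma \ref{lem-p-weak} shows
$\abs{\iint_{Q_T}\psi\,(p_{N,k,\nu}-V)_+\bke{\De W_{N,k,\nu}+\sum_i F^{(i)}_{N,k,\nu}G_i(p_{N,k,\nu})}\,dxdt}=O(1/k)$,
and then passes to the limit in this identity (with $V=0$, using the strong convergence of $W_{N,k,\nu},\nb W_{N,k,\nu}$ and the Riemann-sum identification of the reaction term via Proposition \ref{prop-2} and Lemma \ref{lem-nGp}) to get $p\bke{\De p+\int_0^1 nG\,da}=0$ weakly. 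Your treatment of the reaction term as a Riemann sum using the $a$-continuity estimate is in the same spirit as the paper's Proposition \ref{prop-2} and Lemma \ref{lem-nGp}, and that part of the plan is sound; but the pressure-compactness and complementarity steps need to be rebuilt along the entropy-dissipation lines above rather than via Aubin--Lions and a multiplied limit equation.
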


The main difficulty in studying the limit $N \to \infty, k \to \infty, \nu \to 0$ arises from the lack of compactness properties for $\bar{n}_{N,k,\nu}$ and $p_{N,k,\nu}$, together with the absence of regularity results for $n_{N,k,\nu}(x,t,a)$ in the variable $a$. For the system \eqref{eq-1}, it is challenging to establish time-compactness for $p_{N,k,\nu}$ to derive regularity of $n_{N,k,\nu}$ from $W_{N,k,\nu}$, and to prove comparison results for $n_{N,k,\nu}(\cdot,\cdot,a)$ across different values of $a$. As a consequence, standard a priori estimates do not imply strong compactness of $p_{N,k,\nu}$, $W_{N,k,\nu}$, or $n_{N,k,\nu}$. This create new challenges, as it is difficult to conclude that terms such as $n_{N,k,\nu} \nabla W_{N,k,\nu}$ or  $\sum_{i=1}^N \frac{n_{N,k,\nu}(x,t,\frac{i}{N})}{N} G(p_{N,k,\nu},\frac{i}{n})$ converge weakly to their expected limits $n \nabla p$ and $\int_{0}^{1} n(x,t,a) G(p,a)\, da$. 
More subtly, as $k \to \infty$, identifying a separate equation for $p$ independent of $\bar{n}$ becomes even more difficulty, since no egularity results are available for terms such as $p_{N,k,\nu}\Delta W_{N,k,\nu}$.

Our proof strategy is as follows.
We first establish uniform a priori estimates with respect to $N,k,\nu$, and then show that the energy dissipation property introduced in \cite{DJK-arXiv2025} can be uniformly extended to all values of $N$. These estimates are presented in Section \ref{sec:apriori}, while the dissipation results are presented in Section \ref{sec:energy}.
Using these estimates together with the dissipation property and $\Gamma$-convergence, we then obtain strong convergence of $W_{N,k,\nu}$ and $\nabla W_{N,k,\nu}$, which allows us to identify the limit of the term $n_{N,k,\nu} \nabla W_{N,k,\nu}$.
Next, by combining the bounded variance estimate for $\norm{n_{N,k,\nu}(\cdot,\cdot,a)}_{L^1(Q_T)}$ in the variable $a$ with the earlier results, we identify the limit of $\sum_{i=1}^N \frac{n_{N,k,\nu}(x,t,\frac{i}{N})}{N} G(p_{N,k,\nu},\frac{i}{n})$. 
Finally, by combining all the uniform estimates with the triple-limit convergence results, we derive an equation for $p$ along a suitable sequential limit. All of these limit identifications are presented in Section \ref{sec:limit}.

\subsection{Future directions and open problems}

Our paper sheds light on the convergence of this Brinkman-type system under the continuous phenotype limit.
Several natural extensions remain open:  
\EN{
\item \emph{Spatially dependent growth rates.} 
A natural direction is to allow each $G_i$ to depend on $x$. 
In biological environments, on the millimeter scale (see \cite{Perthame2015}), it is reasonable to assume that the growth rate for each tumor phenotype varies with spatial location. 

\item \emph{Alternative kernels.} 
Another direction is to explore different choices of the kernel $K_{\nu}$ in the equation $W_{N,k,\nu} = K_{\nu} * p_{N,k,\nu}$. Since the kernel represents the nonlocal potential driving tumor migration, one may consider kernels that are more localized (or more delocalized, respectively). 

\item \emph{Mutation effects.} 
It is also natural to incorporate mutation terms into the equations for $n_{N,k,\nu}(x,t,a)$. Suppose probability coefficients $m_{ab}$ describe the rate at which species with phenotype $a$ mutate into phenotype $b$. Then each $n_{N,k,\nu}(x,t,\frac{j}{N})$ would acquire a decay term $- n_{N,k,\nu}(x,t,\frac{j}{N}) \sum_{i \neq j} m_{ji}$, together with a growth term $\sum_{i \neq j} m_{ij}\, n_{N,k,\nu}(x,t,\frac{i}{N})$.
Such an extension is biologically relevant, as mutation rates across different tumor phenotypes are known to be significant \cite{Tomlinson1996}.
}

\bigskip\noindent{\bf Acknowledgements.}
The authors thank Michael I. Weinstein for very stimulating discussions. 
CL is supported in part by the Simons Foundation Math + X Investigator Award \#376319 (MIW) and AMS-Simons Travel Grant.

\section{Preliminaries}\label{sec3}


\subsection{A priori estimates}\label{sec:apriori}

We first establish some key estimates. The methods used to prove these estimates are similar in spirit to those presented in \cite{DJK-arXiv2025} and \cite{DMS-JDE2025}, and the proofs are presented for completeness.

\begin{proposition}\label{prop-1}
$\bar{n}_{N,k,\nu}$ and all $n_{N,k,\nu}^{(i)}$ are uniformly bounded with respect to $N,k$ and $\nu$ in $L^q(Q_T)$ for any $q\in[1,\infty]$.
Moreover, $p_{N,k,\nu}$ and $W_{N,k,\nu}$ are uniformly bounded with respect to $N,k$ and $\nu$ in $L^\infty(0,T; L^1(\R^d))$.
\end{proposition}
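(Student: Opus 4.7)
My plan is to establish, in order, the uniform $L^1$ and $L^\infty$ bounds on $\bar n_{N,k,\nu}$ and each $n_{N,k,\nu}^{(i)}$, then the intermediate $L^q$ bounds by interpolation, and finally the $L^\infty(0,T;L^1(\R^d))$ bounds on $p_{N,k,\nu}$ and $W_{N,k,\nu}$, in the spirit of the $L^q$/maximum-principle estimates of \cite{DJK-arXiv2025,DMS-JDE2025}. For the $L^1$ bound, I would test each equation in \eqref{eq-1} against the constant $1$, so that the divergence term drops and
\[
\frac{d}{dt}\int_{\R^d} n_{N,k,\nu}^{(i)}\,dx = \int_{\R^d} n_{N,k,\nu}^{(i)} G_i(p_{N,k,\nu})\,dx \le C_0\int_{\R^d} n_{N,k,\nu}^{(i)}\,dx,
\]
with $C_0:=\sup_i G_i(0)$ finite by hypothesis. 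Gronwall and averaging over $i$ give the $L^\infty(0,T;L^1)$ bounds on $n_{N,k,\nu}^{(i)}$ and $\bar n_{N,k,\nu}$, uniform in $N,k,\nu$.

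For the $L^\infty$ bound on $\bar n$, I would exploit the Brinkman resolvent in a maximum-principle argument. Because $p_{N,k,\nu}=\frac{k}{k-1}\bar n^{k-1}$ is monotone in $\bar n$, the spatial maximum of $\bar n$ coincides with that of $p$. Since $W_{N,k,\nu}=(I-\nu\Delta)^{-1} p_{N,k,\nu}$ is the convolution of $p$ against a positive Yukawa kernel of unit mass, $W\le p$ at the point $x^\ast$ where $p$ is maximal, hence $\Delta W=(W-p)/\nu\le 0$ there. Evaluating the $\bar n$-equation at $x^\ast$ reduces to $\partial_t \bar n_{\max}=\bar n_{\max}[\Delta W+\sum_i F^{(i)} G_i(p_{\max})]$, and since $\sum_i F^{(i)} G_i(p)\le 0$ whenever $p\ge p_M:=\max_i p_{M_i}$, one finds $\partial_t \bar n_{\max}\le 0$ as soon as $\bar n_{\max}>n_M^{(k)}:=((k-1)p_M/k)^{1/(k-1)}$. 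Since $n_M^{(k)}\to 1$ as $k\to\infty$ and stays bounded for $k>2$, this yields $\|\bar n_{N,k,\nu}\|_{L^\infty(Q_T)}\le n^\ast:=\max\bigl(\|\bar n_{N,k,\nu}(\cdot,0)\|_{L^\infty},\sup_{k>2} n_M^{(k)}\bigr)$, finite and uniform in $N,k,\nu$. The pointwise computation I would make rigorous either by testing against $\bar n^{q-1}$, using $\int \bar n^q\Delta W=\nu^{-1}\int \bar n^q(W-p)$ with self-adjointness of $(I-\nu\Delta)^{-1}$ and the reaction sign, then sending $q\to\infty$, or by a Stampacchia truncation with $(\bar n-n^\ast)_+$.

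The naive maximum principle fails for the individual $n^{(i)}$ because $\Delta W$ has no definite sign at the point where $n^{(i)}$ is maximal; instead I would work with the ratio $r^{(i)}:=n^{(i)}/\bar n$, which satisfies the pure transport-reaction equation
\[
\partial_t r^{(i)}-\nabla W\cdot\nabla r^{(i)}=r^{(i)}\bigl[G_i(p)-\sum_j F^{(j)} G_j(p)\bigr]
\]
(the $\Delta W$ terms cancel in the ratio). Along the flow of $-\nabla W$, $r^{(i)}$ is multiplied by a bounded exponential factor controlled by $\|p\|_{L^\infty}$, and combined with the $L^\infty$ bound on $\bar n$ and the assumed uniform-in-$(i,N)$ bound on the initial profile $n(x,0,a)$ this gives a uniform $L^\infty$ bound on $n^{(i)}=r^{(i)}\bar n$. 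The $L^q$ bounds for $1<q<\infty$ then follow by interpolating $L^1\cap L^\infty$. For $p$, $\|p(\cdot,t)\|_{L^1}=\frac{k}{k-1}\int \bar n^{k-1}\,dx\le \frac{k}{k-1}\|\bar n\|_{L^\infty}^{k-2}\|\bar n(\cdot,t)\|_{L^1}$, and uniformity in $k$ is preserved because $(n_M^{(k)})^{k-1}\le p_M$ keeps $\|\bar n\|_{L^\infty}^{k-2}$ controlled. Finally $\|W(\cdot,t)\|_{L^1}\le \|p(\cdot,t)\|_{L^1}$ by the $L^1$ contraction of the resolvent $(I-\nu\Delta)^{-1}$, uniform in $\nu$.

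The hardest step will be to make the $L^\infty$ bound on $\bar n$ jointly uniform across $\nu\to 0$, $k\to\infty$, and $N\to\infty$: the smoothing of $(I-\nu\Delta)^{-1}$ degenerates as $\nu\to 0$, and the pressure exponent amplifies small variations of $\bar n$ near unity. The quantitative control must therefore come entirely from the $k$-independent cutoff $p_M$ supplied by $\sum_i F^{(i)} G_i(p)\le 0$ for $p\ge p_M$, while the resolvent structure $W\le p$ at the peak of $p$ is used only qualitatively and $N$-independently. Making the ratio argument for $n^{(i)}$ yield $N$-independent constants is the secondary obstacle, which is handled by the BV-in-$a$ assumption on the initial data that prevents $\bar n(x,0)$ from vanishing faster than individual $n^{(i)}(x,0)$ on the common initial support.
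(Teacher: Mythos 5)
Your treatment of $\bar n_{N,k,\nu}$, $p_{N,k,\nu}$ and $W_{N,k,\nu}$ follows essentially the same route as the paper: $L^1$ bounds by integrating the equations and Gr\"onwall, the $L^\infty$ bound on $\bar n_{N,k,\nu}$ through the maximum principle for the pressure (the paper simply cites Lemma 2.1 of Tang et al., whose proof is the Yukawa-kernel argument you sketch), $L^q$ by interpolation, the pointwise inequality $p_{N,k,\nu}\le C\,\bar n_{N,k,\nu}$ from the uniform pressure bound, and the unit-mass resolvent kernel for $W_{N,k,\nu}$. The one genuinely different step is the $L^\infty$ bound on the individual $n^{(i)}_{N,k,\nu}$: you pass to the ratio $r^{(i)}=n^{(i)}_{N,k,\nu}/\bar n_{N,k,\nu}$ and integrate along characteristics of $-\nabla W_{N,k,\nu}$, whereas the paper compares $n^{(i)}_{N,k,\nu}$ with $\Psi(x,t)=\alpha(x)\,\bar n_{N,k,\nu}(x,t)e^{2\beta t}$, with $\alpha(x)=\min\bigl(\|n(\cdot,0,\cdot)\|_{L^\infty}/\bar n(x,0),\,N\bigr)$, via a Gr\"onwall estimate on $\int_S (n-\Psi)\,dx$ --- the same cancellation of the $\Delta W$ term, but run in weak form.

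There is a genuine gap in your version of that step. Your argument yields $n^{(i)}_{N,k,\nu}(\cdot,t)\le \|r^{(i)}(\cdot,0)\|_{L^\infty}\,e^{Ct}\,\|\bar n_{N,k,\nu}\|_{L^\infty}$, and the generic bound on the initial ratio is only $r^{(i)}(x,0)\le N$ (for instance if at some $x$ only phenotype $i$ is present initially), so the estimate degenerates with $N$ unless $n^{(i)}(x,0)/\bar n(x,0)$ is controlled uniformly. The fix you invoke --- the hypothesis $\|\partial_a n(\cdot,0,a)\|_{L^1(\R^d)}\le M$ --- does not provide this: it is an $L^1$-in-$x$ bound and gives no pointwise information, and even a pointwise bound on $\partial_a n(x,0,a)$ would only give additive control of the form $n(x,0,a)\le \bar n(x,0)+V(x)$, not the multiplicative control needed where $\bar n(x,0)$ is small. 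The paper's capped weight $\alpha(x)=\min(\cdot\,,N)$, together with the ensuing case analysis, is exactly the device introduced to avoid assuming a uniform initial ratio; you need either that device or an explicit hypothesis bounding the initial ratio to close the argument. A secondary issue is that the characteristic flow of $-\nabla W_{N,k,\nu}$ is not rigorously available at this regularity: uniformly in the parameters one only has $\nabla W_{N,k,\nu}\in L^2(Q_T)$, and even for fixed $\nu$ one has $D^2W_{N,k,\nu}=D^2K_\nu*p_{N,k,\nu}\in L^q$ only for $q<\infty$, so $\nabla W_{N,k,\nu}$ need not be Lipschitz; the ratio identity should therefore be exploited in weak/Gr\"onwall form (as the paper does with $\Psi$) rather than along classical characteristics.
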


\begin{proof}
We first consider $\bar{n}_{N,k,\nu}$. 
Note that $\bar{n}_{N,k,\nu}$ solves \eqref{eq-n}.
By the standard interpolation inequality $\norm{\bar{n}_{N,k,\nu}}_{L^q(Q_T)} \le \norm{\bar{n}_{N,k,\nu}}_{L^1(Q_T)}^{1/q} \norm{\bar{n}_{N,k,\nu}}_{L^\infty(Q_T)}^{1-1/q}$, it suffices to show that $\bar{n}_{N,k,\nu}$ is uniformly bounded in $L^1(Q_T)$ and $L^\infty(Q_T)$.

We first show that $\bar{n}_{N,k,\nu}$ is uniformly bounded in $L^\infty(Q_T)$.
Since $\sum_{i=1}^N F_{N,k,\nu}^{(i)} = 1$ for all $N$ and $G_i(p) = G(p,\tfrac{i}{N}) \leq \norm{G}_{L^\infty}$, we obtain 
$\sum_{i=1}^N F_{N,k,\nu}^{(i)} G_i(p) \leq \norm{G}_{L^\infty},$ which is independent of $N$, $k$ and $\nu$. 
Therefore, by the maximum principle \cite[Lemma 2.1]{Tang-CAMB2013}, we have $\norm{p_{N,k,\nu}}_{L^\infty(Q_T)}\le C$ in $Q_T$ for some $C>0$ independent of $N$, $k$ and $\nu$. 
Since $\bar{n}_{N,k,\nu} = \bigl(\tfrac{k-1}{k} p_{N,k,\nu}\bigr)^{1/(k-1)}$, it follows that 
\[
\norm{\bar{n}_{N,k,\nu}}_{L^\infty(Q_T)} \le \norm{p_{N,k,\nu}}_{L^\infty(Q_T)}^{\frac1{k-1}}\le C^{\frac1{k-1}} \le \max(1, C).
\]

We now prove $\bar{n}_{N,k,\nu}$ is uniformly bounded in $L^1(\mathbb{R}^d)$, and thus also in $L^1(Q_T)$. 
To this end, we integrate \eqref{eq-n} over $\R^d$ and use $\sum_{i=1}^N F_{N,k,\nu}^{(i)}= 1$ to get
$$ \partial_t \int_{\mathbb{R}^d} \bar{n}_{N,k,\nu}\, dx = \int_{\mathbb{R}^d} \div(\bar{n}_{N,k,\nu}\nabla W_{N,k,\nu})\, dx + \int_{\mathbb{R}^d} \bar{n}_{N,k,\nu} \sum_{i=1}^N F_{N,k,\nu}^{(i)} G_i(p_{N,k,\nu})\, dx \leq \norm{G}_{L^\infty} \int_{\mathbb{R}^d} \bar{n}_{N,k,\nu}\, dx.$$
Applying Gr\"onwall's inequality yields
\EQ{\label{eq-gronwall}
\int_{\mathbb{R}^d} \bar{n}_{N,k,\nu}(x,t)\, dx \leq e^{\norm{G}_{L^\infty}t} \int_{\mathbb{R}^d} \bar{n}_{N,k,\nu}(x,0)\, dx = \norm{\bar{n}_{N,k,\nu}(\cdot,0)}_{L^1(\R^d)} e^{\norm{G}_{L^\infty}t}.
}
Integrating over $[0,T]$, we deduce
\[
\norm{\bar{n}_{N,k,\nu}}_{L^1(Q_T)} \le C_1
\]
for some $C_1=C_1(G,T)$ independent of $N$, $\nu$ and $k$.

To bound $p_{N,k,\nu}$, note that 
\EQN{
p_{N,k,\nu}(x,t) = \frac{k}{k-1}\, \bar{n}_{N,k,\nu}^{k-1}(x,t) 
&\le 2 \norm{\bar{n}_{N,k,\nu}}_{L^\infty(Q_T)}^{k-2} \bar{n}_{N,k,\nu}(x,t)\\ 
&\le 2 \norm{p_{N,k,\nu}}_{L^\infty(Q_T)}^{\frac{k-2}{k-1}} \bar{n}_{N,k,\nu}(x,t)\\
&\le 2 C^{\frac{k-2}{k-1}} \bar{n}_{N,k,\nu}(x,t)
\le 2 \max(1, C) \bar{n}_{N,k,\nu}(x,t).
}
Thus, $\norm{p_{N,k,\nu}(\cdot,t)}_{L^1(\R^d)} \le C_2 \norm{\bar{n}_{N,k,\nu}(\cdot,t)}_{L^1(\R^d)}$ for some $C_2>0$ independent of $N,k$ and $\nu$.
Using \eqref{eq-gronwall}, we then obtain
\[
\norm{p_{N,k,\nu}}_{L^\infty(0,T;L^1(\R^d))} \le C_2\norm{\bar{n}_{N,k,\nu}(\cdot,0)}_{L^1(\R^d)} e^{\norm{G}_{L^\infty}T}.
\]
Since $W_{N,k,\nu} - \nu \Delta W_{N,k,\nu} = p_{N,k,\nu}$, there exists a kernel $K$ with $\norm{K}_{L^1}=1$, uniformly in $N,k,\nu$. 
By Young's convolution inequality, the $L^1$ and $L^\infty$ estimates for $p_{N,k,\nu}$ yield corresponding uniform estimates for $W_{N,k,\nu}$.

Finally, we prove the uniform bounds for each of the $n_{N,k,\nu}^{(i)}$. 
For fixed $N$, define $\Psi(x,t) := \alpha(x) \bar{n}_{N,k,\nu}(x,t) e^{2\beta t}$, where $\alpha(x) := \min(\frac{\norm{n_{N,k,\nu}(x,0,a)}_{L^\infty(\R^d\times[0,1])}}{\bar{n}(x,0)},N)$, and
$\beta := \norm{G}_{L^\infty}$.
Fix $a=\frac{i}{N}$ and set $S = \{x \in \R^d : n(x,t,a) \geq \Psi(x,t)\}$. A direct computation gives
\EQN{
    \frac{d}{dt}\int_{S} & \bkt{n_{N,k,\nu}(x,t,a) - \Psi(x,t)} dx \\
    &= \int_{S} -\bigl(\partial_t\Psi-\partial_t n_{N,k,\nu}\bigr) dx \\
    &= \int_{S} -\nabla \cdot \Bigl(\bigl(\Psi-n_{N,k,\nu}\bigr) \nabla W_{N,k,\nu}\Bigr) dx \\
    &\quad + \int_{S} -\Bigl(2\beta \Psi + \alpha(x) e^{2\beta t}\int_0^1 n_{N,k,\nu} G_{N}\left(p_{N,k,\nu},b\right)db -n_{N,k,\nu}G_{N}\left(p_{N,k,\nu},a\right)\Biggr)dx \\
    &\le \int_{S} -\bigl(\beta \Psi - n_{N,k,\nu} G_{N}(n_{N,k,\nu};a)\bigr) dx \\
    &\le \beta \int_{S} \bkt{n_{N,k,\nu}(x,t,a) - \Psi(x,t)} dx.
}
By Gr\"onwall's inequality, 
$$\int_{S} \bkt{n_{N,k,\nu}(x,t,a) - \Psi(x,t)} dx \leq e^{\beta t}\int_{S} \bkt{n_{N,k,\nu}(x,0,a) - \Psi(x,0)} dx.$$
Since $\Psi(x,0) = \min\bke{\norm{n_{N,k,\nu}(x,0,a) }_{L^\infty(\R^d\times[0,1])},\, N\, \bar{n}(x,0)} \geq n_{N,k,\nu}(x,0,a)$, the right-hand side vanishes. 
Thus $n_{N,k,\nu}(x,t,a) \le \Psi(x,t)$ almost everywhere.
If $\alpha(x)=N$, then $N\bar{n}(x,0) \le \norm{n_{N,k,\nu}(x,0,a)}_{L^\infty(\R^d\times[0,1])}$, giving $\Psi(x,t) \leq \norm{n_{N,k,\nu}(x,0,a)}_{L^\infty(\R^d\times[0,1])} e^{2\beta t}$.
Therefore,
\EQN{
0 \le n_{N,k,\nu}(x,t,a) \le \norm{n_{N,k,\nu}(x,0,a)}_{L^\infty(\R^d\times[0,1])}
e^{2 \norm{G}_{L^{\infty}} T}.
}

Finally, we prove all $n_{N,k,\nu}^{(i)}$ are uniformly bounded in $L^1(Q_T)$. 
Integrating \eqref{eq-1} over $\R^d$ yields
$$ \partial_t \int_{\mathbb{R}^d} n_{N,k,\nu}^{(i)}\, dx = \int_{\mathbb{R}^d} \div(n_{N,k,\nu}^{(i)}\nabla W_{N,k,\nu})\, dx + \int_{\mathbb{R}^d} n_{N,k,\nu}^{(i)} G_i(p_{N,k,\nu})\, dx \leq \norm{G}_{L^\infty} \int_{\mathbb{R}^d} n_{N,k,\nu}^{(i)}\, dx.$$
By Gr\"onwall's inequality, 
\EQN{
\int_{\mathbb{R}^d} n_{N,k,\nu}^{(i)}(x,t)\, dx \leq e^{\norm{G}_{L^\infty}t} \int_{\mathbb{R}^d} n_{N,k,\nu}^{(i)}(x,0)\, dx = \norm{n_{N,k,\nu}^{(i)}(\cdot,0)}_{L^1(\R^d)} e^{\norm{G}_{L^\infty}t}.
}
Integrating over $[0,T]$, we deduce
\[
\norm{n_{N,k,\nu}^{(i)}}_{L^1(Q_T)} \le C_2
\]
for some $C_2=C_2(G,T)$ independent of $N$, $\nu$ and $k$.
This proves the proposition.
\end{proof}

Here we present another estimate for $n(x,t,a)$.

\begin{proposition}\label{prop-2}
There exists $C_1$, $C_2>0$, independent of $N,k,\nu,a$ and $b$, such that for any $t\in [0,T]$ and $a,b\in[0,1]$, we have
$$\int_{\mathbb{R}^d} \vert n_{N,k,\nu}(x,t,a) - n_{N,k,\nu}(x,t,b) \vert\, dx \leq C_1 \int_{\mathbb{R}^d} \vert n_{N,k,\nu}(x,0,a) - n_{N,k,\nu}(x,0,b) \vert\, dx + C_2\vert a-b \vert.$$ 
\end{proposition}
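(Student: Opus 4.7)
The plan is to set $w(x,t) := n_{N,k,\nu}(x,t,a) - n_{N,k,\nu}(x,t,b)$ and derive an $L^1(\R^d)$ Grönwall inequality for $w$. Since $W_{N,k,\nu}$ and $p_{N,k,\nu}$ depend only on the aggregate $\bar n_{N,k,\nu}$ and not on the phenotype variable, subtracting the equations \eqref{eq-1} (extended in the parameter $a$) at $a$ and at $b$ gives
\EQN{
\pd_t w = \div\!\bke{w\,\nb W_{N,k,\nu}} + n_{N,k,\nu}(\cdot,\cdot,a) G(p_{N,k,\nu},a) - n_{N,k,\nu}(\cdot,\cdot,b) G(p_{N,k,\nu},b).
}
Splitting the reaction term as $\bkt{n_{N,k,\nu}(\cdot,\cdot,a)-n_{N,k,\nu}(\cdot,\cdot,b)}G(p_{N,k,\nu},a) + n_{N,k,\nu}(\cdot,\cdot,b)\bkt{G(p_{N,k,\nu},a)-G(p_{N,k,\nu},b)}$ isolates a factor of $w$ from a remainder controlled by $|a-b|$.

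\textbf{Key steps.} First, I would compute $\tfrac{d}{dt}\int_{\R^d}|w|\,dx$ by testing the equation against $\sgn(w)$ (justified either by a standard regularization of the sign function, a Kato-inequality argument, or by working on the viscous approximation used in Lemma \ref{lem:weak-solution} and then passing to the limit). The transport term contributes nothing after integration, since $\sgn(w)\,\div(w\,\nb W_{N,k,\nu}) = \div(|w|\nb W_{N,k,\nu})$ pointwise and the spatial decay of $w$ and of $\nb W_{N,k,\nu}$ (owing to the compactly supported initial data and the smoothing $(I-\nu\De)^{-1}$) kills the resulting boundary term at infinity. Second, for the reaction term, the first piece is bounded by $\|G\|_{L^\infty}\int_{\R^d}|w|\,dx$, while the second piece is bounded by $L_G|a-b|\int_{\R^d} n_{N,k,\nu}(x,t,b)\,dx$, where $L_G := \sup_{p\in[0,\|p_{N,k,\nu}\|_\infty],\,a\in[0,1]}|\pd_a G(p,a)|$ is finite because $G\in C^1(\R^+\times[0,1])$ and $p_{N,k,\nu}$ is uniformly bounded in $L^\infty$ by Proposition \ref{prop-1}.

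\textbf{Closing the estimate.} By Proposition \ref{prop-1}, $\|n_{N,k,\nu}(\cdot,t,b)\|_{L^1(\R^d)} \le C e^{\|G\|_{L^\infty}T}$ uniformly in $N,k,\nu,b,t$. Thus we arrive at the differential inequality
\EQN{
\tfrac{d}{dt}\int_{\R^d}|w(x,t)|\,dx \;\le\; \|G\|_{L^\infty}\int_{\R^d}|w(x,t)|\,dx \;+\; C'|a-b|,
}
for a constant $C'$ independent of $N,k,\nu,a,b$. Applying Grönwall's inequality on $[0,T]$ yields
\EQN{
\int_{\R^d}|w(x,t)|\,dx \;\le\; e^{\|G\|_{L^\infty}T}\int_{\R^d}|w(x,0)|\,dx + \frac{C'}{\|G\|_{L^\infty}}\bke{e^{\|G\|_{L^\infty}T}-1}|a-b|,
}
which gives the claim with $C_1 := e^{\|G\|_{L^\infty}T}$ and $C_2 := \tfrac{C'}{\|G\|_{L^\infty}}(e^{\|G\|_{L^\infty}T}-1)$, both independent of $N,k,\nu,a,b$.

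\textbf{Main obstacle.} The delicate point is the rigorous justification of the chain rule $\sgn(w)\,\pd_t w = \pd_t|w|$ and of $\sgn(w)\,\div(w\,\nb W_{N,k,\nu}) = \div(|w|\nb W_{N,k,\nu})$ at the level of weak solutions, since $w$ is only known to be in $L^\infty\cap L^1$ and $\nb W_{N,k,\nu}$ lacks pointwise regularity. I would handle this by performing the computation on smooth approximants (as in the construction underlying Lemma \ref{lem:weak-solution}) where $\sgn$ is replaced by $\sgn_\ve$ with $\sgn_\ve(s) := s/\sqrt{s^2+\ve^2}$, obtaining the inequality with an $O(\ve)$ error, and then passing to $\ve\to0$ using the $L^\infty$ bound of $W_{N,k,\nu}$ in $H^1$ coming from the Brinkman regularization $W-\nu\De W=p$. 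Since the final estimate depends only on uniform $L^1$/$L^\infty$ quantities already controlled in Proposition \ref{prop-1}, all constants remain independent of $N,k,\nu$.
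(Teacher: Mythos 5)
Your proposal is correct and follows essentially the same route as the paper: form the difference $\tilde n = n(\cdot,\cdot,a)-n(\cdot,\cdot,b)$, split the reaction term into a part proportional to $\tilde n$ and a remainder controlled by $|a-b|$ via $\partial_a G$, test with $\sgn(\tilde n)$ so the transport term vanishes, and close with the inhomogeneous Grönwall inequality together with the uniform $L^1$ bound from Proposition \ref{prop-1}. Your extra care in justifying the $\sgn$ regularization and in taking the sup of $|\partial_a G|$ only over the compact range of $p_{N,k,\nu}$ is a welcome refinement of details the paper leaves implicit.
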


\begin{proof}
Note that by \eqref{eq-1}, the equation for $\tilde{n}_{N,k,\nu}(x,t) := n_{N,k,\nu}(x,t,a) - n_{N,k,\nu}(x,t,b)$ is given by
$$\frac{d}{dt}\tilde{n}_{N,k,\nu} = \div(\tilde{n}_{N,k,\nu}\nabla W_{N,k,\nu}) + \tilde{n}_{N,k,\nu}G(p_{N,k,\nu},a) + n_{N,k,\nu}(x,t,b)(G(p_{N,k,\nu},a)-G(p_{N,k,\nu},b)).$$
Multiplying both sides by ${\rm sgn}(\tilde{n}_{N,k,\nu})$ and integrating, we obtain
\begin{equation}
\begin{aligned}
 \frac{d}{dt}& \int_{\mathbb{R}^d} \vert \tilde{n}_{N,k,\nu} \vert\, dx \\
& \leq \int_{\mathbb{R}^d} \bkt{\div( \vert \tilde{n}_{N,k,\nu} \vert \nabla W_{N,k,\nu}) + \vert \tilde{n}_{N,k,\nu} \vert G(p_{N,k,\nu},a) + n_{N,k,\nu}(x,t,a) \vert G(p_{N,k,\nu},a)-G(p_{N,k,\nu},b) \vert} dx \\
& = \int_{\mathbb{R}^d} \bkt{\vert \tilde{n}_{N,k,\nu} \vert G(p_{N,k,\nu},a) + n_{N,k,\nu}(x,t,a)\vert G(p_{N,k,\nu},a)-G(p_{N,k,\nu},b) \vert} dx \\
& \leq \norm{G}_{L^{\infty}} \int_{\mathbb{R}^d} \vert \tilde{n}_{N,k,\nu} \vert\, dx + \norm{n_{N,k,\nu}(\cdot,t,a)}_{L^1(\mathbb{R}^d)} \norm{\frac{\partial G}{\partial a}}_{L^{\infty}} \vert b-a \vert.
\end{aligned}
\end{equation}
Applying the inhomogeneous Gr\"onwall's inequality yields
\begin{equation}
\begin{aligned}
 \int_{\mathbb{R}^d} \vert \tilde{n}_{N,k,\nu} \vert\, dx 
& \leq e^{\norm{G}_{L^{\infty}} t} \int_{\mathbb{R}^d} \vert \tilde{n}_{N,k,\nu}(x,0) \vert\, dx + \frac{\norm{n_{N,k,\nu}(\cdot,t,a)}_{L^1(\mathbb{R}^d)} \norm{\frac{\partial G}{\partial a}}_{L^{\infty}} \vert b-a \vert}{\norm{G}_{L^{\infty}}} \bke{e^{\norm{G}_{L^{\infty}} t} - 1} \\
& \leq e^{\norm{G}_{L^{\infty}} T} \int_{\mathbb{R}^d} \vert \tilde{n}_{N,k,\nu}(x,0) \vert\, dx + \frac{\norm{n_{N,k,\nu}(\cdot,t,a)}_{L^1(\mathbb{R}^d)} \norm{\frac{\partial G}{\partial a}}_{L^{\infty}} \vert b-a \vert}{\norm{G}_{L^{\infty}}} \bke{e^{\norm{G}_{L^{\infty}} T} - 1}.
\end{aligned}
\end{equation}
Finally, combining Proposition \ref{prop-1} with our assumption on the growth function $G(p,a)$ completes the proof.
\end{proof}

\begin{lemma}\label{lem-1}
$\nabla W_{N,k,\nu}$ is uniformly bounded with respect to $N,k$ and $\nu$ in $L^2(Q_T)$.
\end{lemma}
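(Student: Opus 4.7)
The plan is to test the aggregate equation \eqref{eq-n} for $\bar n_{N,k,\nu}$ against $p_{N,k,\nu}$ and derive an energy--dissipation identity in the spirit of \cite{DJK-arXiv2025, DMS-JDE2025}, then verify that all the resulting constants are independent of $N$ (not merely of $(k,\nu)$ as already asserted in Lemma \ref{lem:weak-solution}). Setting $H(r):=r^k/(k-1)$ so that $H'(\bar n)=\Pi_k(\bar n)=p$, the multiplication of \eqref{eq-n} by $p_{N,k,\nu}$ followed by spatial integration yields
\EQN{
\frac{d}{dt}\int_{\R^d} H(\bar n_{N,k,\nu})\,dx + \int_{\R^d} \bar n_{N,k,\nu}\,\nabla p_{N,k,\nu}\cdot\nabla W_{N,k,\nu}\,dx = \int_{\R^d} \bar n_{N,k,\nu}\,p_{N,k,\nu}\,G^{\ast}_{N,k,\nu}\,dx,
}
where $G^{\ast}_{N,k,\nu}:=\sum_{i=1}^{N} F^{(i)}_{N,k,\nu}\,G_i(p_{N,k,\nu})$. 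The key observation for $N$-uniformity is that $G^{\ast}_{N,k,\nu}$ is a convex combination of the bounded growth rates $G_i$ (since $F^{(i)}_{N,k,\nu}\ge 0$ and $\sum_{i=1}^{N} F^{(i)}_{N,k,\nu}=1$ on $\{\bar n_{N,k,\nu}>0\}$), so $\|G^{\ast}_{N,k,\nu}\|_{L^\infty}\le \|G\|_{L^\infty}$ independently of $N$. Combined with the uniform-in-$(N,k,\nu)$ bounds from Proposition \ref{prop-1} on $\bar n_{N,k,\nu}$ and $p_{N,k,\nu}$, this makes the right-hand side uniformly bounded in $(N,k,\nu)$.

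Next I would use the Brinkman elliptic relation $p_{N,k,\nu}=W_{N,k,\nu}-\nu\Delta W_{N,k,\nu}$, which gives $\nabla p_{N,k,\nu}=\nabla W_{N,k,\nu}-\nu\nabla\Delta W_{N,k,\nu}$, and integrate by parts in the viscous correction (following the scheme of \cite{DJK-arXiv2025}) to rewrite
\EQN{
\int \bar n\,\nabla p\cdot\nabla W\,dx = \int \bar n\,|\nabla W|^2\,dx + \nu\int \bar n\,(\Delta W)^2\,dx + \text{(signed lower-order cross terms)},
}
where the lower-order terms are absorbed using the uniform $L^\infty$ and $L^\infty(0,T;L^1(\R^d))$ bounds from Proposition \ref{prop-1}. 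Integrating in time on $[0,T]$ then yields a uniform bound on the weighted dissipation $\int_0^T\!\!\int \bar n_{N,k,\nu}|\nabla W_{N,k,\nu}|^2\,dx\,dt$. To promote this to the unweighted bound $\int_0^T\!\!\int|\nabla W_{N,k,\nu}|^2\,dx\,dt \le C$, I would invoke the Brinkman kernel structure: on the complement of $\supp\bar n_{N,k,\nu}$ the source $p_{N,k,\nu}$ vanishes and $W_{N,k,\nu}$ solves the homogeneous Yukawa equation, so $W_{N,k,\nu}$ (together with its $L^2$-gradient) is controlled there by its trace on $\partial\supp\bar n_{N,k,\nu}$, which is bounded via the maximum principle by $\|p_{N,k,\nu}\|_{L^\infty}$.

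The main obstacle will be precisely this last step---transferring the $\bar n$-weighted dissipation to the unweighted $L^2(Q_T)$ estimate on $\nabla W$ uniformly in $\nu$---which is the technical core of the DJK argument and relies on a careful use of the smoothing properties of the Brinkman kernel $K_\nu$. Our task beyond that is essentially verification: to check that, after replacing the single-species growth rate in \cite{DJK-arXiv2025} by the convex combination $G^{\ast}_{N,k,\nu}$, none of the constants appearing in the chain of inequalities acquires $N$-dependence; by the $L^\infty$ bound on $G^{\ast}_{N,k,\nu}$ noted above, all constants depend only on $\|G\|_{L^\infty}$, $T$, and the (assumed uniform-in-$N$) initial data norms.
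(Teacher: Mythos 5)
There is a genuine gap, and it sits exactly where you place the technical weight of your argument. First, the energy identity you write after substituting $\nabla p_{N,k,\nu}=\nabla W_{N,k,\nu}-\nu\nabla\Delta W_{N,k,\nu}$ is not available: integrating the viscous correction by parts gives
$-\nu\int \bar n\,\nabla\Delta W\cdot\nabla W\,dx=\nu\int \bar n\,(\Delta W)^2\,dx+\nu\int \Delta W\,\nabla\bar n\cdot\nabla W\,dx$,
and the cross term involves $\nabla\bar n_{N,k,\nu}$, for which no estimate exists in this setting (the alternative manipulation via $\nabla W\cdot\nabla\Delta W=\Delta(|\nabla W|^2/2)-|D^2W|^2$ produces $\Delta\bar n$, which is worse). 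This term has no sign and cannot be absorbed by the $L^\infty$ and $L^\infty(0,T;L^1)$ bounds of Proposition \ref{prop-1}, so the claimed ``signed lower-order cross terms'' step fails. Second, even granting the weighted bound $\iint_{Q_T}\bar n_{N,k,\nu}|\nabla W_{N,k,\nu}|^2\,dxdt\le C$, the proposed upgrade to the unweighted bound via the homogeneous Yukawa equation outside $\supp\bar n_{N,k,\nu}$ is not $\nu$-uniform: solutions of $W-\nu\Delta W=0$ with $O(1)$ boundary data develop boundary layers of width $\sqrt{\nu}$ in which $|\nabla W|\sim\nu^{-1/2}$, so a trace/maximum-principle estimate of the exterior Dirichlet energy degenerates as $\nu\to0$. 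Your $N$-uniformity observation (that $\sum_i F^{(i)}_{N,k,\nu}G_i$ is a convex combination bounded by $\|G\|_{L^\infty}$) is correct and is indeed how the paper handles $N$, but the analytic core of your plan does not close.

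The paper's route avoids both obstacles and is much shorter: it works with the pressure equation \eqref{eq-2} rather than the density equation. Integrating $\partial_t p=\nabla p\cdot\nabla W+(k-1)p\Delta W+(k-1)p\sum_iF^{(i)}G_i(p)$ over $Q_T$ and integrating $\nabla p\cdot\nabla W$ by parts turns the transport term into $-\iint p\Delta W$, leaving a factor $(k-2)\iint p\Delta W$; since $k>2$ and $\|p\|_{L^\infty(0,T;L^1)}$ is uniformly bounded, this yields $\iint_{Q_T}(-p\,\Delta W)\,dxdt\le C$ uniformly in $N,k,\nu$. The elliptic relation then gives the pointwise sign identity $(p-W)\Delta W=-\nu(\Delta W)^2\le0$, hence $-W\Delta W\le -p\Delta W$, and one last integration by parts gives $\iint_{Q_T}|\nabla W|^2\,dxdt\le\iint_{Q_T}(-p\,\Delta W)\,dxdt\le C$. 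If you want to salvage your approach, the lesson is that the unweighted dissipation should be extracted from the pressure variable (where the factor $k-1$ and the sign of $\nu(\Delta W)^2$ do the work), not from the $\bar n$-weighted energy.
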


\begin{proof}
First, note that by \cite{Perthame2015}, the pair $(p_{N,k,\nu},W_{N,k,\nu})$ satisfies
\begin{equation}\label{eq-2}
    \begin{cases}
        \partial_tp_{N,k,\nu} = \nabla p_{N,k,\nu}\cdot \nabla W_{N,k,\nu} + (k-1)p_{N,k,\nu}\Delta W_{N,k,\nu} + (k-1)p_{N,k,\nu}\sum_{i=1}^N F_{N,k,\nu}^{(i)} G_i(p_{N,k,\nu}),\\
        W_{N,k,\nu}-\nu \Delta W_{N,k,\nu} = p_{N,k,\nu},\\
        p_{N,k,\nu}\big|_{t=0} = \frac{k}{k-1} \bke{\bar{n}_{N,k,\nu}\big|_{t=0}}^{k-1}.
    \end{cases}
\end{equation}
Integrating the first equation of \eqref{eq-2} over $Q_T$ and applying integration by parts, we obtain
\EQN{
\int_{\mathbb{R}^d} p_{N,k,\nu}(x,T)&\, dx - \int_{\mathbb{R}^d} p_{N,k,\nu}(x,0)\, dx \\
&= (k-2)\iint_{Q_T} p_{N,k,\nu}\Delta W_{N,k,\nu}\, dxdt + (k-1)\iint_{Q_T}p_{N,k,\nu}\sum_{i=1}^N F_{N,k,\nu}^{(i)} G_i(p_{N,k,\nu})\, dxdt\\
&\le (k-2)\iint_{Q_T} p_{N,k,\nu}\Delta W_{N,k,\nu}\, dxdt + (k-1)G T \norm{p_{N,k,\nu}}_{L^\infty(0,T;L^1(\mathbb{R}^d))},
}
where $G=\max_{i=1,\cdots,N} G_i(0)$.
Rearranging, we find
\EQN{
\iint_{Q_T} -p_{N,k,\nu}\Delta W_{N,k,\nu}\, dxdt 
&\le \frac1{k-2}\int_{\R^d} p_{N,k,\nu}(x,0)\, dx + \frac{k-1}{k-2}\, G T \norm{p_{N,k,\nu}}_{L^\infty(0,T;L^1(\mathbb{R}^d))}\\
&\le \norm{p_{N,k,\nu}}_{L^\infty(0,T;L^1(\R^d))} + 2G T \norm{p_{N,k,\nu}}_{L^\infty(0,T;L^1(\mathbb{R}^d))}.
}
By Proposition \ref{prop-1}, $\norm{p_{N,k,\nu}}_{L^\infty(0,T;L^1(\mathbb{R}^d))}$ is uniformly bounded with respect to $\nu$ and $k$. 
Therefore, there exists $C>0$ independent of $\nu$ and $k$ such that
\EQ{\label{eq-bound-pDeltaW}
\iint_{Q_T} -p_{N,k,\nu}\Delta W_{N,k,\nu}\, dxdt \le C.
}
From the second equation of \eqref{eq-2}, we have $(p_{N,k,\nu}-W_{N,k,\nu}) \Delta W_{N,k,\nu} = -\nu (\Delta W_{N,k,\nu})^2 \leq 0$, which implies $-W_{N,k,\nu} \Delta W_{N,k,\nu} \leq -p_{N,k,\nu} \Delta W_{N,k,\nu}$. 
Thus,
$$\iint_{Q_T} -W_{N,k,\nu}\Delta W_{N,k,\nu}\, dxdt 
\le \iint_{Q_T} -p_{N,k,\nu}\Delta W_{N,k,\nu}\, dxdt \le C.$$
Finally, using integration by parts, we conclude
$$\iint_{Q_T} \vert \nabla W_{N,k,\nu} \vert^2\, dxdt \leq C,$$
which completes the proof of the lemma.
\end{proof}

\begin{lemma}\label{lem-2}
$\norm{W_{N,k,\nu}-p_{N,k,\nu}}_{L^2(Q_T)} = O(\sqrt{\nu})$ as $\nu\to0$  uniformly in $N$ and $k$. 
\end{lemma}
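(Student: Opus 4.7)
\textbf{Proof plan for Lemma \ref{lem-2}.}
The starting point is the resolvent identity coming from the third equation of \eqref{eq-1}, which, rearranged, reads
\[
p_{N,k,\nu}-W_{N,k,\nu}=-\nu\,\Delta W_{N,k,\nu}.
\]
Squaring and integrating over $Q_T$ immediately gives
\[
\norm{W_{N,k,\nu}-p_{N,k,\nu}}_{L^2(Q_T)}^{2}=\nu^{2}\iint_{Q_T}\bke{\Delta W_{N,k,\nu}}^{2}dx\,dt,
\]
so the lemma reduces to proving that $\nu\iint_{Q_T}(\Delta W_{N,k,\nu})^{2}\,dx\,dt$ is uniformly bounded in $N,k,\nu$.

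The plan is to multiply the identity $p_{N,k,\nu}-W_{N,k,\nu}=-\nu\Delta W_{N,k,\nu}$ by $-\Delta W_{N,k,\nu}$ and integrate over $Q_T$, yielding
\[
\nu\iint_{Q_T}\bke{\Delta W_{N,k,\nu}}^{2}dx\,dt
=\iint_{Q_T}-p_{N,k,\nu}\Delta W_{N,k,\nu}\,dx\,dt
-\iint_{Q_T}\abs{\nabla W_{N,k,\nu}}^{2}dx\,dt,
\]
after an integration by parts on the $W\Delta W$ term (legitimate since $W_{N,k,\nu}$ and $\nabla W_{N,k,\nu}$ belong to $L^{2}$ by Lemma \ref{lem-1}, and the initial data are compactly supported so boundary contributions at spatial infinity vanish). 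The first term on the right is uniformly bounded by the inequality \eqref{eq-bound-pDeltaW} already proved in the course of Lemma \ref{lem-1}, and the second term is nonpositive. Hence
\[
\nu\iint_{Q_T}\bke{\Delta W_{N,k,\nu}}^{2}dx\,dt\le C,
\]
with $C$ independent of $N,k,\nu$. Combining this with the identity displayed in the first paragraph yields $\norm{W_{N,k,\nu}-p_{N,k,\nu}}_{L^2(Q_T)}^{2}\le C\nu$, which is exactly the claim.

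The only technical concern is justifying the integration by parts, since $\Delta W_{N,k,\nu}$ is only known a priori via the resolvent equation. This is straightforward: from $\Delta W_{N,k,\nu}=\nu^{-1}(W_{N,k,\nu}-p_{N,k,\nu})$, one sees that for each fixed $\nu>0$ the Laplacian is an $L^{2}$ function in space-time (using the $L^{2}$ bounds on $W$ and on $p$, which is $L^\infty\cap L^1$ by Proposition \ref{prop-1}), so all manipulations are classical; the final estimate is then independent of $\nu$. I do not anticipate any serious obstacle beyond this routine justification, since the whole argument is a one-line energy estimate once the bound \eqref{eq-bound-pDeltaW} from the previous lemma is invoked.
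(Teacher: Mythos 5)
Your proposal is correct and is essentially the paper's own argument: both rest on the resolvent identity $W_{N,k,\nu}-p_{N,k,\nu}=\nu\Delta W_{N,k,\nu}$, an integration by parts turning the $W\Delta W$ term into $-\abs{\nabla W_{N,k,\nu}}^2\le 0$, and the uniform bound \eqref{eq-bound-pDeltaW} on $\iint_{Q_T}-p_{N,k,\nu}\Delta W_{N,k,\nu}\,dxdt$ from Lemma \ref{lem-1}. The only cosmetic difference is that you square the identity and bound $\nu\iint_{Q_T}(\Delta W_{N,k,\nu})^2\,dxdt$, while the paper writes $\iint_{Q_T}(W_{N,k,\nu}-p_{N,k,\nu})^2\,dxdt=\nu\iint_{Q_T}(W_{N,k,\nu}-p_{N,k,\nu})\Delta W_{N,k,\nu}\,dxdt$ directly; the two computations are identical.
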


\begin{proof}
Since $W_{N,k,\nu} - p_{N,k,\nu} = \nu \Delta W_{N,k,\nu}$, we obtain
\EQN{
\iint_{Q_T} (W_{N,k,\nu}-p_{N,k,\nu})^2\, dxdt 
&= \nu \iint_{Q_T} (W_{N,k,\nu}-p_{N,k,\nu})\De W_{N,k,\nu}\, dxdt\\
&= \nu \iint_{Q_T} \bke{ -|\nb W_{N,k,\nu}|^2 - p_{N,k,\nu}\De W_{N,k,\nu}} dxdt
\le \nu \iint_{Q_T} -p_{N,k,\nu}\De W_{N,k,\nu}\, dxdt.
}
By \eqref{eq-bound-pDeltaW} in the proof of Lemma \ref{lem-1}, the integral on the right-hand side is uniformly bounded by a constant $C>0$ independent of $\nu$ and $k$.
This completes the proof of the lemma.
\end{proof}

\begin{corollary}\label{lem-3}
$\partial_t \bar{n}_{N,k,\nu} \in L_{\rm loc}^2 (0,T; H^{-1}(\mathbb{R}^d))$ and for each $a$, $\partial_t n_{N,k,\nu}(\cdot, \cdot, a) \in L_{\rm loc}^2 (0,T; H^{-1}(\mathbb{R}^d))$ uniformly in $N$, $\nu$ and $k$.
\end{corollary}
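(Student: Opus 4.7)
The proof is a standard duality argument: test the PDE against $\varphi\in H^1(\mathbb{R}^d)$ and bound the right-hand side using the uniform estimates already established in Proposition \ref{prop-1} and Lemma \ref{lem-1}. The key observation is that the drift term $\bar{n}_{N,k,\nu}\nabla W_{N,k,\nu}$ sits in $L^2(Q_T)$ uniformly (product of an $L^\infty$ quantity and an $L^2$ gradient), and the reaction term sits in $L^\infty(0,T;L^2(\mathbb{R}^d))$ uniformly by interpolation between the $L^1$ and $L^\infty$ bounds.

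\textbf{Step 1 (dual pairing for $\bar{n}$).} Starting from \eqref{eq-n} and pairing with an arbitrary $\varphi\in H^1(\mathbb{R}^d)$, I obtain
\[
\bka{\pd_t \bar{n}_{N,k,\nu},\varphi}
= -\int_{\R^d} \bar{n}_{N,k,\nu}\nb W_{N,k,\nu}\cdot\nb\varphi\, dx
+ \int_{\R^d} \bar{n}_{N,k,\nu}\sum_{i=1}^N F_{N,k,\nu}^{(i)} G_i(p_{N,k,\nu})\,\varphi\, dx.
\]
By Cauchy--Schwarz and $\sum_i F_{N,k,\nu}^{(i)}\le 1$, $|G_i|\le \norm{G}_{L^\infty}$,
\[
\abs{\bka{\pd_t \bar{n}_{N,k,\nu},\varphi}}
\le \norm{\bar{n}_{N,k,\nu}}_{L^\infty(Q_T)} \norm{\nb W_{N,k,\nu}(\cdot,t)}_{L^2(\R^d)} \norm{\nb\varphi}_{L^2(\R^d)}
+ \norm{G}_{L^\infty} \norm{\bar{n}_{N,k,\nu}(\cdot,t)}_{L^2(\R^d)} \norm{\varphi}_{L^2(\R^d)}.
\]

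\textbf{Step 2 (time integration).} Taking the supremum over $\varphi$ with $\norm{\varphi}_{H^1(\R^d)}\le 1$, squaring and integrating in $t$ gives
\[
\int_0^T \norm{\pd_t \bar{n}_{N,k,\nu}}_{H^{-1}(\R^d)}^2\, dt
\le 2\norm{\bar{n}_{N,k,\nu}}_{L^\infty(Q_T)}^2 \norm{\nb W_{N,k,\nu}}_{L^2(Q_T)}^2
+ 2\norm{G}_{L^\infty}^2 \norm{\bar{n}_{N,k,\nu}}_{L^2(Q_T)}^2.
\]
Proposition \ref{prop-1} controls $\norm{\bar{n}_{N,k,\nu}}_{L^\infty(Q_T)}$ and $\norm{\bar{n}_{N,k,\nu}}_{L^2(Q_T)}$ uniformly (the $L^2$ bound follows from the $L^1\cap L^\infty$ interpolation, or directly from the $L^q$ statement), while Lemma \ref{lem-1} controls $\norm{\nb W_{N,k,\nu}}_{L^2(Q_T)}$ uniformly in $N,k,\nu$. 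Hence $\pd_t\bar{n}_{N,k,\nu}\in L^2(0,T;H^{-1}(\R^d))$ uniformly, which in particular gives the $L^2_{\rm loc}$ conclusion.

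\textbf{Step 3 (argument for $n_{N,k,\nu}(\cdot,\cdot,a)$).} Repeating the same duality argument on the $i$-th equation of \eqref{eq-1} with $a=i/N$, one gets exactly the analogue
\[
\abs{\bka{\pd_t n_{N,k,\nu}(\cdot,t,a),\varphi}}
\le \norm{n_{N,k,\nu}^{(i)}}_{L^\infty(Q_T)} \norm{\nb W_{N,k,\nu}(\cdot,t)}_{L^2(\R^d)} \norm{\nb\varphi}_{L^2(\R^d)}
+ \norm{G}_{L^\infty} \norm{n_{N,k,\nu}^{(i)}(\cdot,t)}_{L^2(\R^d)} \norm{\varphi}_{L^2(\R^d)},
\]
and Proposition \ref{prop-1} once again provides uniform $L^\infty(Q_T)$ and (by interpolation) $L^2(Q_T)$ control on each $n_{N,k,\nu}^{(i)}$, independent of $i$, $N$, $k$, $\nu$. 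Squaring and integrating as before concludes the proof.

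\textbf{Main difficulty.} There is no serious obstacle: the computation is purely duality plus the already-established uniform estimates. The only mild point worth flagging is that one uses $L^2\hookrightarrow H^{-1}(\R^d)$ (with constant $1$) to handle the reaction term as a distribution on all of $\R^d$, and that the factorization $\bar{n}_{N,k,\nu}\nb W_{N,k,\nu}\in L^2(Q_T)$ relies critically on the uniform $L^\infty$ bound from Proposition \ref{prop-1} paired with the $L^2$ gradient bound from Lemma \ref{lem-1}.
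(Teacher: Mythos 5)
Your proposal is correct and follows the same route the paper intends: the paper's own proof is a one-line remark that the corollary "follows directly from the uniform boundedness of $\nabla W_{N,k,\nu}$ in Lemma \ref{lem-1}," and your duality computation (drift term bounded by $\norm{\bar{n}_{N,k,\nu}}_{L^\infty}\norm{\nb W_{N,k,\nu}}_{L^2}$, reaction term by the uniform $L^1\cap L^\infty$, hence $L^2$, bounds from Proposition \ref{prop-1}) is exactly the standard argument being invoked. In fact you prove the slightly stronger global $L^2(0,T;H^{-1}(\R^d))$ bound, which of course implies the stated $L^2_{\rm loc}$ conclusion.
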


\begin{proof}
The corollary follows directly from the uniform boundedness of $\nabla W_{N,k,\nu}$ established in Lemma \ref{lem-1}.
\end{proof}

\begin{lemma}\label{lem-n-tail}
There exists constant C independent from $N$, $k$ and $\nu$ so that, for all $t\in[0,T]$, we have
$$\int_{\mathbb{R}^d}\abs{x}\bar{n}_{N,k,\nu}\, dx \leq C.$$
\end{lemma}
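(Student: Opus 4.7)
My plan is a standard first-moment estimate for the scalar transport–reaction equation \eqref{eq-n}, using a smooth cutoff to justify integration by parts and then Grönwall.

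First, I would introduce a family of smooth weights $\phi_R \in C^\infty(\R^d)$ with $\phi_R(x) = |x|$ on the ball $B_R$, $\phi_R(x)$ constant for $|x| \geq 2R$, and $\|\nabla \phi_R\|_{L^\infty} \leq 1$ uniformly in $R$. Testing \eqref{eq-n} against $\phi_R$ and integrating by parts in the divergence term yields
\EQN{
\frac{d}{dt} \int_{\R^d} \phi_R\, \bar n_{N,k,\nu}\, dx
= - \int_{\R^d} \bar n_{N,k,\nu}\, \nabla W_{N,k,\nu} \cdot \nabla \phi_R\, dx
+ \int_{\R^d} \phi_R\, \bar n_{N,k,\nu} \sum_{i=1}^N F_{N,k,\nu}^{(i)} G_i(p_{N,k,\nu})\, dx.
}
The reaction term is bounded by $\|G\|_{L^\infty} \int \phi_R\, \bar n_{N,k,\nu}\, dx$ since $\sum_i F_{N,k,\nu}^{(i)} = 1$, which is exactly what we want for Grönwall.

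The transport term is the one that requires uniform control in $(N,k,\nu)$. Using $|\nabla \phi_R| \leq 1$ and Cauchy–Schwarz in $x$, then in $t$, gives
\EQN{
\int_0^t \int_{\R^d} \bar n_{N,k,\nu} |\nabla W_{N,k,\nu}|\, dx\, ds
\leq \Big(\int_0^t \|\bar n_{N,k,\nu}(\cdot,s)\|_{L^2}^2\, ds\Big)^{1/2} \|\nabla W_{N,k,\nu}\|_{L^2(Q_T)}.
}
By Proposition \ref{prop-1} we have $\|\bar n_{N,k,\nu}\|_{L^\infty(Q_T)} \leq C$ and $\|\bar n_{N,k,\nu}\|_{L^\infty(0,T;L^1(\R^d))} \leq C$, so the interpolation $\|\bar n\|_{L^2}^2 \leq \|\bar n\|_{L^\infty}\|\bar n\|_{L^1}$ yields a uniform bound on $\|\bar n_{N,k,\nu}(\cdot,s)\|_{L^2}$; combined with Lemma \ref{lem-1}, the entire right-hand side is uniformly bounded.

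Integrating the identity in time and invoking Grönwall's inequality produces
\EQN{
\int_{\R^d} \phi_R\, \bar n_{N,k,\nu}(x,t)\, dx
\leq e^{\|G\|_{L^\infty} T}\left( \int_{\R^d} \phi_R\, \bar n_{N,k,\nu}(x,0)\, dx + C \right),
}
with $C$ independent of $N,k,\nu,R$. Since the initial data is compactly supported (so $\int |x| \bar n_{N,k,\nu}(x,0)\, dx$ is controlled uniformly), letting $R \to \infty$ and using monotone convergence finishes the proof. The only real subtlety is that $\nabla W_{N,k,\nu}$ is controlled merely in $L^2(Q_T)$ rather than $L^\infty(0,T;L^2)$; this is precisely why I integrate in time first and apply Cauchy–Schwarz in space-time rather than pointwise in $t$, so the only time-dependence entering Grönwall is the reaction term.
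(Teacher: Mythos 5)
Your proposal is correct and follows essentially the same route as the paper: test the equation for $\bar n_{N,k,\nu}$ against the weight $|x|$, bound the transport term via Cauchy--Schwarz in space-time using the uniform $L^1\cap L^\infty$ bounds on $\bar n_{N,k,\nu}$ and the $L^2(Q_T)$ bound on $\nabla W_{N,k,\nu}$ from Lemma \ref{lem-1}, and close with Gr\"onwall. Your smooth cutoff $\phi_R$ and the explicit interpolation $\|\bar n\|_{L^2}^2\le\|\bar n\|_{L^\infty}\|\bar n\|_{L^1}$ simply make rigorous steps the paper carries out formally.
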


\begin{proof}
Note that $\bar{n}$ solves \eqref{eq-n}. Integrating by parts, we obtain
$$ \partial_t \int_{\mathbb{R}^d} \abs{x}\bar{n}_{N,k,\nu}\, dx = -\int_{\mathbb{R}^d} \bar{n}_{N,k,\nu}\nabla W_{N,k,\nu} \cdot \frac{x}{\abs{x}}\, dx + \int_{\mathbb{R}^d} \abs{x}\bar{n}_{N,k,\nu} \sum_{i=1}^N F_{N,k,\nu}^{(i)} G_i(p_{N,k,\nu})\, dx.$$
Applying H\"older's inequality, integrating in time, and using the uniform bounds, we deduce
$$ \int_{\mathbb{R}^d} \abs{x}\bar{n}_{N,k,\nu}(x,t)\, dx \leq \norm{\bar{n}_{N,k,\nu}(\cdot,0)}_{L^1(\mathbb{R}^d)} + \norm{\bar{n}_{N,k,\nu}}_{L^{\infty}} \norm{\nabla W_{N,k,\nu}}_{L^2(Q_T)} + \norm{G}_{L^\infty} \int_{0}^t \int_{\mathbb{R}^d} \abs{x}\bar{n}_{N,k,\nu}\, dxdt.$$
Thus, by Gr\"onwall's inequality in integral form, we obtain
$$\int_{\mathbb{R}^d}\abs{x}\bar{n}_{N,k,\nu}(x,t)\, dx \leq \bke{\norm{\bar{n}_{N,k,\nu}(\cdot,0)}_{L^1(\mathbb{R}^d)} + \norm{\bar{n}_{N,k,\nu}}_{L^{\infty}} \norm{\nabla W_{N,k,\nu}}_{L^2(Q_T)}} e^{\norm{G}_{L^\infty}t}.$$
This establishes the desired result.
\end{proof}

\begin{lemma}\label{lem-p-weak}
Let $f_{+}$ denote the positive part of a function $f$ (if $f(x) \geq 0$, then $f_{+}(x) = f(x)$, and if not, then $f_{+}(x) = 0$). Then for any test function $\psi$, and any value $V \geq 0$, we have:
$$\abs{\int_{Q_T} \psi (p_{N,k,\nu}-V)_{+}\bke{\Delta W_{N,k,\nu} + \sum_{i=1}^N F_{N,k,\nu}^{(i)} G_i(p_{N,k,\nu})} dxdt} = O\bke{\frac{1}{k}}$$
as $k \to \infty$ uniformly in $N$ and $\nu$.
\end{lemma}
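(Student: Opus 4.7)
The plan is to exploit the evolution equation satisfied by $p_{N,k,\nu}$ in \eqref{eq-2} to factor out an explicit $1/(k-1)$, leaving a remainder that is uniformly controlled. Rearranging the first equation of \eqref{eq-2} yields
\[
p_{N,k,\nu}\bke{\Delta W_{N,k,\nu} + \sum_{i=1}^N F_{N,k,\nu}^{(i)} G_i(p_{N,k,\nu})} = \frac{1}{k-1}\bke{\partial_t p_{N,k,\nu} - \nabla p_{N,k,\nu} \cdot \nabla W_{N,k,\nu}}.
\]
Multiplying both sides by $(p_{N,k,\nu}-V)_+/p_{N,k,\nu}$, which lies in $[0,1]$ and vanishes wherever $p_{N,k,\nu} \le V$, converts the left-hand side into the target integrand (up to the factor $k-1$). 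To recognize the right-hand side as a total derivative, I would introduce the primitive
\[
\eta_V(p) := \int_0^p \frac{(s-V)_+}{s}\, ds =
\begin{cases}(p-V) - V\ln(p/V),& p > V,\\ 0,& p \le V,\end{cases}
\]
which satisfies $\eta_V'(p) = (p-V)_+/p$ and $0\le\eta_V(p)\le (p-V)_+ \le p$. Then $\tfrac{(p-V)_+}{p}\partial_t p = \partial_t \eta_V(p)$, and similarly for the spatial gradient.

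Multiplying by $\psi$, integrating over $Q_T$, and integrating by parts (in $t$ using the compact support of $\psi$, and in $x$ on the $\nabla \eta_V(p) \cdot \nabla W$ term) would yield
\[
(k-1)\, I = -\int_{Q_T} \partial_t\psi\; \eta_V(p_{N,k,\nu})\, dxdt + \int_{Q_T} \eta_V(p_{N,k,\nu})\, \nabla\psi \cdot \nabla W_{N,k,\nu}\, dxdt + \int_{Q_T} \psi\, \eta_V(p_{N,k,\nu})\, \Delta W_{N,k,\nu}\, dxdt,
\]
where $I$ denotes the integral in the statement. It then suffices to show that each of the three terms on the right is $O(1)$ uniformly in $N, k, \nu$.

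The first two terms are straightforward: Proposition \ref{prop-1} provides a uniform $L^\infty(Q_T)$ bound on $\eta_V(p_{N,k,\nu})$, while $\psi$ has compact support and $\nabla W_{N,k,\nu}$ is uniformly bounded in $L^2(Q_T)$ by Lemma \ref{lem-1}.

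The main obstacle is the third term, $\int_{Q_T}\psi\,\eta_V(p_{N,k,\nu})\,\Delta W_{N,k,\nu}\,dxdt$. Substituting the Brinkman relation $\nu \Delta W = W - p$ and using $\|W-p\|_{L^2(Q_T)} = O(\sqrt{\nu})$ from Lemma \ref{lem-2} naively gives only an $O(1/\sqrt{\nu})$ bound, which is not uniform as $\nu \to 0$. To sharpen this to a uniform estimate, I would exploit the Brinkman energy identity
\[
\int_{Q_T} (-p_{N,k,\nu}\,\Delta W_{N,k,\nu})\, dxdt = \int_{Q_T}|\nabla W_{N,k,\nu}|^2\, dxdt + \nu \int_{Q_T}(\Delta W_{N,k,\nu})^2\, dxdt \leq C
\]
derived in the proof of Lemma \ref{lem-1}, together with the pointwise bound $\eta_V(p)\le p$, to dominate the contribution of $\eta_V(p_{N,k,\nu})\Delta W_{N,k,\nu}$ by the already-controlled quantity $p_{N,k,\nu}\Delta W_{N,k,\nu}$, modulo terms that are absorbed into the $1/(k-1)$ prefactor. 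Dividing through by $k-1$ then delivers $|I| = O(1/k)$ uniformly in $N$ and $\nu$, as required.
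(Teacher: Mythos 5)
Your reduction is exactly the paper's starting point: multiply the pressure equation from \eqref{eq-2} by $\eta_V(p_{N,k,\nu})=(p_{N,k,\nu}-V)_+/p_{N,k,\nu}$, pass to its primitive $\tilde\eta_V$ (your $\eta_V$), test against $\psi$, and integrate by parts, so that everything hinges on bounding
\[
\iint_{Q_T}\psi\,\tilde\eta_V(p_{N,k,\nu})\,\Delta W_{N,k,\nu}\,dx\,dt
\]
uniformly in $N,k,\nu$. This is precisely where your argument has a genuine gap. The quantity controlled in the proof of Lemma \ref{lem-1} is the \emph{signed} integral $\iint_{Q_T}(-p_{N,k,\nu}\Delta W_{N,k,\nu})\,dx\,dt=\iint_{Q_T}|\nabla W_{N,k,\nu}|^2\,dx\,dt+\nu\iint_{Q_T}(\Delta W_{N,k,\nu})^2\,dx\,dt\le C$, not $\iint_{Q_T}p_{N,k,\nu}|\Delta W_{N,k,\nu}|\,dx\,dt$. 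Since $\Delta W_{N,k,\nu}$ has no sign, the pointwise inequality $0\le\tilde\eta_V(p)\le p$ gives no comparison between $\tilde\eta_V(p)\Delta W$ and the controlled quantity $-p\,\Delta W$; the only available bound on the absolute value is $\|\Delta W_{N,k,\nu}\|_{L^2(Q_T)}\le C\nu^{-1/2}$, which is exactly the $O(\nu^{-1/2})$ loss you already identified. The phrase ``modulo terms absorbed into the $1/(k-1)$ prefactor'' does not name any mechanism that produces such terms, so the estimate is not closed.

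The paper closes this step with an algebraic splitting that your proposal is missing: write $\tilde\eta_V(p_{N,k,\nu})\Delta W_{N,k,\nu}=\bigl(\tilde\eta_V(p_{N,k,\nu})-\tilde\eta_V(W_{N,k,\nu})\bigr)\Delta W_{N,k,\nu}+\tilde\eta_V(W_{N,k,\nu})\Delta W_{N,k,\nu}$. For the first piece, the Lipschitz bound $|\tilde\eta_V(p)-\tilde\eta_V(W)|\le\|\eta_V\|_{L^\infty}\,|p-W|$ together with $W_{N,k,\nu}-p_{N,k,\nu}=\nu\Delta W_{N,k,\nu}$ gives $|(\tilde\eta_V(p)-\tilde\eta_V(W))\Delta W|\le\|\eta_V\|_{L^\infty}\,\nu(\Delta W_{N,k,\nu})^2$, whose space-time integral is uniformly bounded by the energy identity above; this is exactly how the paper uses the bound on $\|(W_{N,k,\nu}-p_{N,k,\nu})\Delta W_{N,k,\nu}\|_{L^1(Q_T)}$, in the same spirit as Lemma \ref{lem-2}. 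For the second piece, one more integration by parts yields $-\tilde\eta_V(W)\nabla\psi\cdot\nabla W-\psi\,\eta_V(W)|\nabla W|^2$, both uniformly bounded via Lemma \ref{lem-1} and the $L^\infty$ bounds of Proposition \ref{prop-1}. With this splitting inserted, the rest of your argument (the $\partial_t\psi$ and $\nabla\psi\cdot\nabla W$ terms, and division by $k-1$) goes through as you wrote it.
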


\begin{proof}
First, note that by \cite{Perthame2015}, the pair $(p_{N,k,\nu},W_{N,k,\nu})$ satisfies \eqref{eq-2}.
Define $\eta_{V}(p) := \frac{(p-V)_{+}}{p}$, $\tilde{\eta}(x) := \int_{0}^x \eta(t)dt$, for any $V \geq 0$.
Then
\begin{equation}
\tilde{\eta}(p) =
\begin{cases}
     (p-V)-V(\ln(\frac{p}{V})), &\text{ if $p \geq V$,} \\
     0, &\text{ if $p < V$.}
\end{cases}
\end{equation}
For $p \geq V \geq 0$, we have $\ln(\tfrac{p}{V}) \geq 0$, hence $\tilde{\eta}(p) \leq p$. For $0 \leq p \leq V$, we obtain $\tilde{\eta}(p)=0 \leq p$. Thus, in all cases, $0 \leq \tilde{\eta}(p) \leq p$. 
Since $\eta(p) \geq 0$, we conclude $\tilde{\eta}(p) \geq 0$.
By Proposition \ref{prop-1}, this implies that $\tilde{\eta}(p_{N,k,\nu})$ is uniformly bounded with respect to $N$, $k$ and $\nu$ in $L^{\infty}(0,T;L^1(\mathbb{R}^d))$.

Multiplying the equation for $p_{N,k,\nu}$ by $\eta(p_{N,k,\nu})$ gives
$$\eta(p_{N,k,\nu})\partial_tp_{N,k,\nu} = \eta(p_{N,k,\nu}) \nabla p_{N,k,\nu}\cdot \nabla W_{N,k,\nu} + (k-1)p_{N,k,\nu}\eta(p_{N,k,\nu})\bke{\Delta W_{N,k,\nu} + \sum_{i=1}^N F_{N,k,\nu}^{(i)} G_i(p_{N,k,\nu})}.$$
Since $p\eta(p) = (p-V)_{+}$ and $\eta(p) = \frac{d\tilde{\eta}(p)}{dp}$, we obtain
\EQN{
\partial_t \tilde{\eta}(p_{N,k,\nu}) &= \nabla (\tilde{\eta}(p_{N,k,\nu})-\tilde{\eta}(W_{N,k,\nu})) \cdot \nabla W_{N,k,\nu} + \tilde{\eta}(W_{N,k,\nu}) \vert \nabla W_{N,k,\nu} \vert^2 \\
&\quad +  (k-1)(p_{N,k,\nu}-V)_{+}\bke{\Delta W_{N,k,\nu} + \sum_{i=1}^N F_{N,k,\nu}^{(i)} G_i(p_{N,k,\nu})}.
}
Multiplying both sides by a test function $\psi$ and integrating over $Q_T$ yields
\EQN{
 -&\iint_{Q_T} \tilde{\eta}(p_{N,k,\nu}) \partial_t \psi\, dxdt \\
  &= -\iint_{Q_T} \bkt{(\tilde{\eta}(p_{N,k,\nu})-\tilde{\eta}(W_{N,k,\nu})) \nabla \psi \cdot \nabla W_{N,k,\nu} + \psi (\tilde{\eta}(p_{N,k,\nu})-\tilde{\eta}(W_{N,k,\nu})) \Delta W_{N,k,\nu}} dxdt \\
&\quad + \iint_{Q_T}\tilde{\eta}(W_{N,k,\nu}) \psi \vert \nabla W_{N,k,\nu} \vert^2\, dxdt + (k-1) \iint_{Q_T} \psi (p_{N,k,\nu}-V)_{+}\bke{\Delta W_{N,k,\nu} + \sum_{i=1}^N F_{N,k,\nu}^{(i)} G_i(p_{N,k,\nu})} dxdt.
}
Therefore, 
\begin{equation}
\begin{aligned}
 (k-1)& \abs{\iint_{Q_T} \psi (p_{N,k,\nu}-V)_{+}\bke{\Delta W_{N,k,\nu} + \sum_{i=1}^N F_{N,k,\nu}^{(i)} G_i(p_{N,k,\nu})} dxdt} \\
& \leq \abs{\iint_{Q_T} \tilde{\eta}(p_{N,k,\nu}) \partial_t \psi\, dxdt} + \abs{\iint_{Q_T}(\tilde{\eta}(p_{N,k,\nu})-\tilde{\eta}(W_{N,k,\nu})) \nabla \psi \cdot \nabla W_{N,k,\nu}\, dxdt}\\
&\quad + \abs{\iint_{Q_T} \psi (\tilde{\eta}(p_{N,k,\nu})-\tilde{\eta}(W_{N,k,\nu})) \Delta W_{N,k,\nu}\, dxdt} + \abs{\iint_{Q_T}\tilde{\eta}(W_{N,k,\nu}) \psi \vert \nabla W_{N,k,\nu} \vert^2\, dxdt}.
\end{aligned}
\end{equation}

Since $p_{N,k,\nu}$ and $W_{N,k,\nu}$ are uniformly bounded in $L^{\infty}$, the functions $\eta(p_{N,k,\nu})$ and $\eta(W_{N,k,\nu})$ are also uniformly bounded in $L^\infty$.
Denote this bound by $\norm{\eta}_{L^\infty}$.
Moreover, since $\psi$ is a test function, $\psi$, $\partial_t\psi$, and $\nabla\psi$ are bounded in $L^1 \cap L^\infty$. 
Using Lemma \ref{lem-2} and H\"older's inequality, we obtain, for some constant $C$ independent of $N$, $k$ and $\nu$,
\begin{equation}
\begin{aligned}
 (&k-1) \abs{\iint_{Q_T} \psi (p_{N,k,\nu}-V)_{+}\bke{\Delta W + \sum_{i=1}^N F_{N,k,\nu}^{(i)} G_i(p_{N,k,\nu})} dxdt } \\
& \leq \norm{\tilde{\eta}(p_{N,k,\nu})}_{L^1(Q_T)} \norm{ \partial_{t}\psi}_{L^{\infty}} + \norm{\nabla \psi }_{L^\infty} \norm{\eta}_{L^\infty} (\norm{p_{N,k,\nu}}_{L^2(Q_T)} + \norm{W_{N,k,\nu}}_{L^2(Q_T)}) \norm{ \nabla W_{N,k,\nu}}_{L^2(Q_T)}\\
&\quad + \norm{\psi}_{L^{\infty}} \norm{\eta }_{L^\infty} \norm{(W_{N,k,\nu}-p_{N,k,\nu}) \Delta W_{N,k,\nu}}_{L^1{Q_T}} + \norm{\psi}_{L^{\infty}} \norm{\tilde{\eta}(W_{N,k,\nu})}_{L^{\infty}} \norm{\nabla W_{N,k,\nu}}_{L^2(Q_T)}^2 
 \leq C.
\end{aligned}
\end{equation}
Since $(k-1) \abs{\iint_{Q_T} \psi (p_{N,k,\nu}-V)_{+}\bke{\Delta W_{N,k,\nu} + \sum_{i=1}^N F_{N,k,\nu}^{(i)} G_i(p_{N,k,\nu})} dxdt} \leq C$, dividing both sides by $(k-1)$ yields the desired result.
\end{proof}

\subsection{Energy Evolution and Dissipation}\label{sec:energy}

From this point on, we establish several energy dissipation results following the framework of \cite{DJK-arXiv2025}. We impose more restrictive conditions on the energy function and prove a slightly weaker result.

\begin{lemma}\label{lem-4}
If $e(\bar{n})$ and $z(p)$ are piecewise $C^2$ convex functions, so that each piece of their second derivatives are of the form $c\bar{n}^\alpha + c'$ and $bp^\beta + b'$ with $\alpha, \beta > 0$ respectively, $e(0)=z(0)=z'(0)=0$, and they satisfy relation:
$$\bar{n} e'(\bar{n}) - e(\bar{n})=z'(p).$$
Then $e(\bar{n}_{N,k,\nu})$ satisfies the following integral equation:
\EQN{
     \int_{\mathbb{R}^d}& \phi(x,T) e(\bar{n}_{N,k,\nu}(x,T))\, dx - \int_{\mathbb{R}^d} \phi(x,0) e(\bar{n}_{N,k,\nu}(x,0))\, dx - \iint_{Q_T}\partial_t \phi e(\bar{n}_{N,k,\nu})\, dxdt \\
    & = \iint_{Q_T} \bigg[ e(\bar{n}_{N,k,\nu}) \nabla W_{N,k,\nu} \cdot \nabla \phi + \phi (z'(p_{N,k,\nu})-z'(W_{N,k,\nu}))\Delta W_{N,k,\nu} + z(W_{N,k,\nu}) \Delta \phi \\
    &\qquad\qquad - z''(W_{N,k,\nu}) \phi \vert \nabla W_{N,k,\nu} \vert^2 + \phi (e(\bar{n}_{N,k,\nu})+z'(p_{N,k,\nu})) \sum_{i=1}^N F_{N,k,\nu}^{(i)} G_i(p_{N,k,\nu}) \bigg] dxdt,
}
for any $\phi \in C^\infty_{\rm loc}(Q_T)$.
\end{lemma}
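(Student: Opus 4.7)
The plan is to multiply the evolution equation \eqref{eq-n} for $\bar{n}_{N,k,\nu}$ by $e'(\bar{n}_{N,k,\nu})$, use the constitutive relation $\bar{n}e'(\bar n)-e(\bar n)=z'(p)$ to recast the flux in terms of $z(W)$ plus controlled remainders, and then pair against $\phi$ with integration by parts in both $t$ and $x$.

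For the pointwise derivation, the chain rule gives $\partial_t e(\bar n)=e'(\bar n)\,\partial_t\bar n$, so using \eqref{eq-n} the main task is to rewrite $e'(\bar n)\,\div(\bar n\nabla W)$. The product rule yields
\[
e'(\bar n)\,\div(\bar n\nabla W)=\div\bigl(\bar n e'(\bar n)\nabla W\bigr)-\bar n e''(\bar n)\nabla\bar n\cdot\nabla W,
\]
and differentiating $\bar n e'(\bar n)-e(\bar n)=z'(p)$ in $x$ gives $\bar n e''(\bar n)\nabla\bar n=z''(p)\nabla p=\nabla z'(p)$. Substituting $\bar n e'(\bar n)=e(\bar n)+z'(p)$ and collecting, the two terms collapse into $\div(e(\bar n)\nabla W)+z'(p)\Delta W$, producing the pointwise identity
\[
\partial_t e(\bar n_{N,k,\nu})=\div(e(\bar n_{N,k,\nu})\nabla W_{N,k,\nu})+z'(p_{N,k,\nu})\Delta W_{N,k,\nu}+\bigl(e(\bar n_{N,k,\nu})+z'(p_{N,k,\nu})\bigr)\sum_i F_{N,k,\nu}^{(i)}G_i(p_{N,k,\nu}).
\]
To isolate $W$ in the middle term I split $z'(p)\Delta W=(z'(p)-z'(W))\Delta W+z'(W)\Delta W$ and use $z'(W)\Delta W=\Delta z(W)-z''(W)|\nabla W|^2$. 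Multiplying by $\phi$, integrating over $Q_T$, and integrating by parts once in $t$ on $\iint\phi\,\partial_t e(\bar n)$, once in $x$ on the divergence term, and twice in $x$ on the $\Delta z(W)$ term, yields the stated identity. Spatial boundary contributions vanish thanks to the tail estimate of Lemma \ref{lem-n-tail}, the $L^q$ bounds of Proposition \ref{prop-1}, and the $L^2(Q_T)$ control of $\nabla W_{N,k,\nu}$ from Lemma \ref{lem-1}.

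The main obstacle is justifying these manipulations when $e,z$ are only piecewise $C^2$ and the triple $(\bar n_{N,k,\nu}, p_{N,k,\nu}, W_{N,k,\nu})$ has only the weak regularity furnished by Lemma \ref{lem:weak-solution}. My plan is a double mollification: first regularize $\bar n, p, W$ in space--time so that the chain and product rules become literally applicable, and then smooth out the finitely many junctions of $e$ and $z$ in a way that preserves both convexity and the algebraic relation $\bar n e'(\bar n)-e(\bar n)=z'(p)$. The explicit power-type structure $e''\sim c\bar n^\alpha+c'$ and $z''\sim bp^\beta+b'$ on each piece makes these approximations compatible and provides enough uniform control of the nonlinear compositions $e(\bar n)$, $z'(p)$, $z'(W)$, $z''(W)$ to pass to the limit in each term by dominated convergence. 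The limit passage uses the $L^\infty$ bounds of Proposition \ref{prop-1}, the $L^2$ bound on $\nabla W$ from Lemma \ref{lem-1}, and the closeness $\|W_{N,k,\nu}-p_{N,k,\nu}\|_{L^2(Q_T)}=O(\sqrt\nu)$ from Lemma \ref{lem-2}, following the energy evolution framework developed in \cite{DJK-arXiv2025}. The phenotype-weighted reaction $\sum_i F_{N,k,\nu}^{(i)}G_i(p)$ introduces no new difficulty since $\sum_i F_{N,k,\nu}^{(i)}=1$ pointwise and $|G_i|\le\|G\|_{L^\infty}$ uniformly, so every individual term estimate from the single-species setting of \cite{DJK-arXiv2025} carries over without change.
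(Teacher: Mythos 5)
Your proposal is correct and follows essentially the same route as the paper: derive the pointwise identity $\partial_t e(\bar n)=\div(e(\bar n)\nabla W)+z'(p)\Delta W+(e(\bar n)+z'(p))\sum_i F^{(i)}_{N,k,\nu}G_i(p)$ via the relation $\bar n e'(\bar n)-e(\bar n)=z'(p)$, split $z'(p)\Delta W=(z'(p)-z'(W))\Delta W+\Delta z(W)-z''(W)|\nabla W|^2$, then test with $\phi$ and integrate by parts. Your extra care about mollifying the solution and the piecewise-$C^2$ junctions, and about the vanishing of spatial boundary terms, only supplements the paper's more formal computation.
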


\begin{proof}
Note that by definition, $e(\bar{n})$ and $z(p)$ are both differentiable for $\bar{n}>0$.
Thus, differentiating directly and using \eqref{eq-n} give
\EQN{
\partial_t e(\bar{n}_{N,k,\nu}) 
&= \div(e(\bar{n}_{N,k,\nu}) \nabla W_{N,k,\nu}) + (\bar{n}_{N,k,\nu}e'(\bar{n}_{N,k,\nu})-e(\bar{n_{N,k,\nu}}))\Delta W_{N,k,\nu}\\
&\quad + \bar{n}_{N,k,\nu} e'(\bar{n}_{N,k,\nu}) \sum_{i=1}^N F_{N,k,\nu}^{(i)} G_i(p_{N,k,\nu}).
}
By definition, $\bar{n}e'(\bar{n}) - e(\bar{n}) = z'(p)$. 
Hence,
$$\partial_t e(\bar{n}_{N,k,\nu}) = \div(e(\bar{n}_{N,k,\nu}) \nabla W_{N,k,\nu}) + z'(p_{N,k,\nu})\Delta W_{N,k,\nu} + (e(\bar{n}_{N,k,\nu})+z'(p_{N,k,\nu})) \sum_{i=1}^N F_{N,k,\nu}^{(i)} G_i(p_{N,k,\nu}).$$
Next, observe that $z'(p) = (z'(p)-z'(W)) + z'(W)$ and that $z'(W)\Delta W = \Delta z(W) - z''(W) \vert W \vert^2$.
Therefore,
\EQN{
\partial_t e(\bar{n}_{N,k,\nu}) 
&= \div(e(\bar{n}_{N,k,\nu}) \nabla W_{N,k,\nu}) + (z'(p_{N,k,\nu})-z'(W_{N,k,\nu}))\Delta W_{N,k,\nu} + \Delta z(W_{N,k,\nu}) \\
&\quad - z''(W_{N,k,\nu}) \vert \nabla W_{N,k,\nu} \vert^2  + (e(\bar{n}_{N,k,\nu})+z'(p_{N,k,\nu})) \sum_{i=1}^N F_{N,k,\nu}^{(i)} G_i(p_{N,k,\nu}).
}
Finally, multiplying by $\phi$, integrating over $Q_T$, and applying integration by parts completes the argument.
\end{proof}

In the same way, we establish the following corollary.

\begin{corollary}\label{lem-5}
If $\eta(t) \in W^{1,\infty}([0,T])$, and $\liminf_{\bar{n} \to 0} \frac{e(\bar{n})}{\bar{n}} \neq -\infty$, with $e,z$ additionally satisfying the same conditions as Lemma \ref{lem-4}, then we have
\EQN{
 \int_{\mathbb{R}^d}& \eta(T) e(\bar{n}_{N,k,\nu}(x,T))\, dx - \int_{\mathbb{R}^d} \eta(0) e(\bar{n}_{N,k,\nu}(x,0))\, dx - \iint_{Q_T}\partial_t \eta e(\bar{n}_{N,k,\nu})\, dxdt \\
 &=  \iint_{Q_T} \bigg[\eta (z'(p_{N,k,\nu})-z'(W_{N,k,\nu}))\Delta W_{N,k,\nu} - z''(W_{N,k,\nu}) \eta \vert \nabla W_{N,k,\nu} \vert^2 \\
 &\qquad\qquad\quad + \eta (e(\bar{n}_{N,k,\nu})+z'(p_{N,k,\nu})) \sum_{i=1}^N F_{N,k,\nu}^{(i)} G_i(p_{N,k,\nu})\bigg] dxdt.
}
\end{corollary}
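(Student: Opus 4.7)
The plan is to reduce Corollary \ref{lem-5} to Lemma \ref{lem-4} by a localization argument: apply Lemma \ref{lem-4} to the test function $\phi(x,t) = \eta_\varepsilon(t)\chi_R(x)$, where $\eta_\varepsilon \in C^\infty([0,T])$ is a standard time-mollification of $\eta$ and $\chi_R \in C_c^\infty(\R^d)$ is a radial cutoff with $\chi_R \equiv 1$ on $B_R$, $\supp\chi_R \subset B_{2R}$, $\|\nabla\chi_R\|_\infty \le C/R$, and $\|\Delta\chi_R\|_\infty \le C/R^2$; then send $\varepsilon \to 0$ and $R \to \infty$. The $\varepsilon$-limit is routine: each integrand depends linearly on $\eta_\varepsilon$ or $\partial_t\eta_\varepsilon$ and is uniformly dominated by $\|\eta\|_{W^{1,\infty}}$ times an $L^1(Q_T)$ function, so dominated convergence recovers the same identity with $\eta$ in place of $\eta_\varepsilon$. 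The substantive step is the spatial limit $R \to \infty$.

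With $\phi = \eta\chi_R$, the identity from Lemma \ref{lem-4} differs from the target identity in Corollary \ref{lem-5} only by the two error terms produced by the spatial derivatives of $\chi_R$:
$$
I_1(R) := \iint_{Q_T} e(\bar{n}_{N,k,\nu})\eta\,\nabla W_{N,k,\nu}\cdot\nabla\chi_R\,dxdt, \qquad I_2(R) := \iint_{Q_T} z(W_{N,k,\nu})\eta\,\Delta\chi_R\,dxdt,
$$
both supported in the annulus $B_{2R}\setminus B_R$. For $I_1(R)$, Cauchy--Schwarz with $|\nabla\chi_R| \le C/R$ gives $|I_1(R)| \lesssim \|\eta\|_\infty \|e(\bar{n}_{N,k,\nu})\|_{L^2(Q_T)} \|\nabla W_{N,k,\nu}\mathbf{1}_{B_{2R}\setminus B_R}\|_{L^2(Q_T)}$; the hypothesis $\liminf_{\bar{n}\to 0} e(\bar{n})/\bar{n} \ne -\infty$ combined with convexity of $e$ and $e(0)=0$ forces $|e(\bar{n})| \lesssim \bar{n}$ on the bounded range of $\bar{n}$, so $e(\bar{n}_{N,k,\nu}) \in L^1\cap L^\infty \subset L^2(Q_T)$ by Proposition \ref{prop-1}, while $\nabla W \in L^2(Q_T)$ by Lemma \ref{lem-1}, so $I_1(R) \to 0$ by absolute continuity of the integral. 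For $I_2(R)$, since $z(0)=z'(0)=0$ and the piecewise structure of $z''$ yields $|z(W)| \lesssim W^2 + W^{\beta+2}$ on bounded ranges, one has $z(W_{N,k,\nu}) \in L^1(Q_T)$ using $W \in L^\infty \cap L^\infty_t L^1_x$ from Proposition \ref{prop-1}, so $|I_2(R)| \le C R^{-2} \|\eta\|_\infty \int_0^T\!\int_{B_{2R}\setminus B_R} |z(W_{N,k,\nu})|\,dxdt \to 0$.

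It remains to pass $\chi_R \to 1$ inside the four surviving integrands on the right-hand side of Lemma \ref{lem-4}, for which each must lie in $L^1(Q_T)$. The main obstacle I anticipate is the term $\eta\,(z'(p_{N,k,\nu})-z'(W_{N,k,\nu}))\Delta W_{N,k,\nu}$, since $\Delta W$ alone is not known to be integrable. Using $W-p = \nu\Delta W$ from the second line of \eqref{eq-2}, this expression equals $\nu^{-1}(z'(p)-z'(W))(W-p)$, which is pointwise nonpositive by monotonicity of $z'$; Lipschitz continuity of $z'$ on the bounded range of $p, W$ together with $\|W-p\|_{L^2(Q_T)}^2 = O(\nu)$ from Lemma \ref{lem-2} yields the uniform bound $\iint |(z'(p)-z'(W))\Delta W|\,dxdt \le L\nu^{-1}\|W-p\|_{L^2(Q_T)}^2 = O(1)$. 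The remaining three terms are straightforward: $|\nabla W|^2 \in L^1(Q_T)$ (Lemma \ref{lem-1}); $e(\bar{n}) \in L^1(Q_T)$ as above; $z'(p) \in L^\infty_t L^1_x$ (since $|z'(p)| \lesssim p$ near $0$ and $p \in L^\infty_t L^1_x$ by Proposition \ref{prop-1}); and $\sum_{i=1}^N F_{N,k,\nu}^{(i)} G_i(p)$ is bounded by $\|G\|_\infty$. Dominated convergence then produces the claimed identity.
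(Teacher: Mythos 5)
Your proposal is correct and takes essentially the same route as the paper: substitute $\phi=\eta(t)$ times a spatial cutoff into Lemma \ref{lem-4}, check that the cutoff error terms vanish, and pass to the limit using $L^1(Q_T)$ bounds on each surviving integrand together with dominated convergence. Your justification of the key $L^1$ bound on $(z'(p_{N,k,\nu})-z'(W_{N,k,\nu}))\Delta W_{N,k,\nu}$, via the identity $\nu\Delta W_{N,k,\nu}=W_{N,k,\nu}-p_{N,k,\nu}$, the monotonicity of $z'$, and Lemma \ref{lem-2}, is simply a more explicit version of the estimate the paper asserts in one line.
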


\begin{proof}
To prove this corollary, take $\psi_m$ to be an increasing sequence of smooth functions such that $\psi_m(x) = 1$ for $\lvert x \rvert \leq m$ and $\psi_m(x) = 0$ for $\lvert x \rvert \geq m+1$. For each $m$, set $\phi_m(x,t) = \eta(t)\psi_m(x)$. Substituting $\phi_m$ into the identity from the previous lemma, we obtain the desired equation. It then remains to use certain $L^1$ bounds and pass the limit $m \to \infty$ inside the integral.

Note that since $\liminf_{n \to 0} \tfrac{e(\bar{n})}{n} \neq -\infty$, the $L^1(\mathbb{R}^d)$ bound on $n_{N,k,\nu}$ implies an $L^1(\mathbb{R}^d)$ bound on $e(\bar{n}_{N,k,\nu})$, uniformly in $m,\nu,k$. Moreover, the convexity of $z$, together with the $L^\infty(Q_T)$ bounds on $p_{N,k,\nu}$ and $W_{N,k,\nu}$, the $L^2(Q_T)$ bound on $\nabla W_{N,k,\nu}$, and Hölder’s inequality, yields an $L^1(Q_T)$ bound on $(z'(p_{N,k,\nu})-z'(W_{N,k,\nu}))\Delta W_{N,k,\nu}$. All the other terms can be bounded in $L^1$ by our previous estimates.

Thus, by the dominated convergence theorem, we may pass to the limit $m \to \infty$ under the integral sign, which gives the desired conclusion. This proves the corollary.
\end{proof}

We now observe that by Banach-Alaoglu theorem, all the standard weak convergence results follow from our estimates. In particular, $n_{N,k,\nu}(\cdot,\cdot,\alpha)$, $\bar{n}_{N,k,\nu}$, $\bke{\frac{(k-1)W_{N,k,\nu}}{k}}^{\frac{1}{k-1}}$, $W_{N,k,\nu}$, $p_{N,k,\nu}$, and $\nabla W_{N,k,\nu}$ converge weakly to $n(\cdot,\cdot,\alpha)$, $\bar{n}$, $\bar{n}$, $p$, $p$ and $\nabla p$, respectively,
with convergence rate independent from $N,k$ and $\nu$. However, the weak convergence of $n\nabla W$ is not immediately clear. To establish this limit, we introduce the following two lemmas:

\begin{lemma}\label{lem-6}
Let $e(\bar{n}), z(p)$ be a pair of convex functions that satisfy the conditions in Lemma \ref{lem-4}, and additionally assume that $\limsup_{n \to 0} \frac{e_n}{n} \neq \infty$. Given any nonnegative test function $\phi \in C_c^\infty(Q_T)$, we have:
\EQN{
\iint_{Q_T} \phi z''(W_{N,k,\nu}) \vert \nabla W_{N,k,\nu} \vert^2\, dxdt \leq C \sup_{b \in [0,\, p_M]} |z'(b)|,
}
where $C$ is independent from $k$ and $\nu$, and $p_M$ is the $L^\infty$ bound for $p$.
\end{lemma}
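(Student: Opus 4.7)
The plan is to derive the bound directly from the identity of Lemma \ref{lem-4}, applied to the given non-negative test function $\phi$, by algebraically isolating the target integral $\iint_{Q_T}\phi\, z''(W_{N,k,\nu})|\nabla W_{N,k,\nu}|^2\,dxdt$ on one side. This leaves, on the other side, six contributions: the two time-boundary evaluations $\pm\int_{\R^d}\phi(\cdot,t)e(\bar{n}_{N,k,\nu}(\cdot,t))\,dx$ at $t=0,T$, the time-derivative term $\iint\partial_t\phi\,e(\bar{n}_{N,k,\nu})\,dxdt$, the advection term $\iint e(\bar{n}_{N,k,\nu})\nabla W_{N,k,\nu}\cdot\nabla\phi\,dxdt$, the Laplacian term $\iint z(W_{N,k,\nu})\Delta\phi\,dxdt$, the cross term $\iint \phi(z'(p_{N,k,\nu})-z'(W_{N,k,\nu}))\Delta W_{N,k,\nu}\,dxdt$, and the reaction term $\iint\phi(e(\bar{n}_{N,k,\nu})+z'(p_{N,k,\nu}))\sum_iF_{N,k,\nu}^{(i)}G_i(p_{N,k,\nu})\,dxdt$.

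The central and most delicate step is the cross term. The difficulty is that $\|\Delta W_{N,k,\nu}\|_{L^2}$ is only of order $\nu^{-1/2}$ (indeed, $W_{N,k,\nu}-p_{N,k,\nu}=\nu\Delta W_{N,k,\nu}$ with the left side $O(\sqrt\nu)$ in $L^2$ by Lemma \ref{lem-2}), so no direct Cauchy--Schwarz estimate survives the limit $\nu\to 0$. Instead, we exploit a sign structure: from $W_{N,k,\nu}-\nu\Delta W_{N,k,\nu}=p_{N,k,\nu}$ one has $p_{N,k,\nu}-W_{N,k,\nu}=-\nu\Delta W_{N,k,\nu}$, so by the mean value theorem $z'(p_{N,k,\nu})-z'(W_{N,k,\nu})=-\nu z''(\xi)\Delta W_{N,k,\nu}$ for some intermediate $\xi\in[0,p_M]$. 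Multiplying by $\Delta W_{N,k,\nu}$ gives $(z'(p_{N,k,\nu})-z'(W_{N,k,\nu}))\Delta W_{N,k,\nu}=-\nu z''(\xi)(\Delta W_{N,k,\nu})^2\le 0$ pointwise by convexity of $z$, and since $\phi\ge 0$ the entire cross term is non-positive and may be dropped from the upper bound. The end-time contribution $-\int\phi(\cdot,T)e(\bar{n}_{N,k,\nu}(\cdot,T))\,dx$ is likewise non-positive (since $e,\phi\ge 0$) and also dropped.

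The remaining four terms are controlled by the uniform a priori bounds of Section \ref{sec:apriori}: Proposition \ref{prop-1} gives uniform $L^\infty(Q_T)$ and $L^\infty_t L^1_x$ estimates on $\bar{n}_{N,k,\nu}$, $p_{N,k,\nu}$, and $W_{N,k,\nu}$, and Lemma \ref{lem-1} gives $\nabla W_{N,k,\nu}\in L^2(Q_T)$ uniformly. The hypothesis $\limsup_{\bar{n}\to 0}e(\bar{n})/\bar{n}<\infty$ combined with convexity and $e(0)=0$ yields $e(\bar{n})\le C\bar{n}$ pointwise, which transfers the $L^1\cap L^\infty$ bound on $\bar{n}_{N,k,\nu}$ to one on $e(\bar{n}_{N,k,\nu})$; this disposes of the initial boundary term, the $\partial_t\phi$ term, the advection term (via Cauchy--Schwarz against $\nabla W_{N,k,\nu}\in L^2$), and the $e(\bar{n}_{N,k,\nu})$ portion of the reaction term. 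The explicitly $z$-dependent pieces use $z(W)\le W\sup_{[0,p_M]}|z'|\le p_M\sup_{[0,p_M]}|z'|$ and $z'(p)\le\sup_{[0,p_M]}|z'|$ pointwise, producing the stated factor $\sup_{b\in[0,p_M]}|z'(b)|$. Collecting these estimates gives the inequality with a constant $C$ depending on $T$, $\phi$, $e$, $z$, $p_M$, and $\|G\|_{L^\infty}$ but not on $N$, $k$, or $\nu$. The hard part is entirely in the cross-term sign argument; everything else is routine bookkeeping against the estimates already proved.
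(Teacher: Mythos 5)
Your skeleton matches the paper's: apply Lemma \ref{lem-4} with the nonnegative test function $\phi$, isolate the dissipation term $\iint\phi\,z''(W_{N,k,\nu})|\nabla W_{N,k,\nu}|^2$, discard the cross term $(z'(p_{N,k,\nu})-z'(W_{N,k,\nu}))\Delta W_{N,k,\nu}$ using the sign coming from $p_{N,k,\nu}-W_{N,k,\nu}=-\nu\Delta W_{N,k,\nu}$ and the monotonicity of $z'$ (this is valid, and dropping it is indeed the right move since $\|\Delta W_{N,k,\nu}\|_{L^1}$ is not uniformly controlled), discard the terminal boundary term, and estimate what remains with Proposition \ref{prop-1} and Lemma \ref{lem-1}. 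The genuine gap is in your treatment of the $e$-dependent terms, and it is precisely the point of the lemma. You bound $e(\bar n_{N,k,\nu})\le C_e\,\bar n_{N,k,\nu}$ with $C_e$ determined by $e$, so the $t=0$ boundary term, the $\partial_t\phi$ term, the advection term, and the $e$-part of the reaction term are estimated by constants depending on $e$ that are \emph{not} multiplied by $\sup_{[0,p_M]}|z'|$. Your final bound is therefore of the form $C_1(e)+C_2\sup_{[0,p_M]}|z'|$, and you explicitly concede that your constant depends on $e$ and $z$. While a $z$-dependent constant formally meets the literal wording ``independent of $k$ and $\nu$,'' it makes the multiplicative form vacuous and breaks the downstream use of the lemma: in the proof of Lemma \ref{lem-10} (and in the same spirit in Lemma \ref{lem-7}) the estimate is applied to a family with $z_\beta''=\chi_{[0,\beta]}$, so that $\sup|z_\beta'|=\beta\to0$, and the corresponding $e_\beta$ varies with $\beta$ through the constraint $\bar n e'-e=z'(p)$; the whole point is that the right-hand side tends to $0$, which your additive term $C_1(e_\beta)$ is not shown to do.

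The missing idea --- the first step of the paper's proof --- is to dominate $e$ by $z'$ pointwise along the solution: under the structural hypotheses of Lemma \ref{lem-4} (each piece of $e''$ of the form $c\bar n^\alpha+c'$ with $\alpha>0$, $e(0)=0$, together with the extra assumption at $\bar n=0$) one has $\bar n\,e'(\bar n)\ge 2e(\bar n)$, hence $e(\bar n_{N,k,\nu})\le \bar n_{N,k,\nu}e'(\bar n_{N,k,\nu})-e(\bar n_{N,k,\nu})=z'(p_{N,k,\nu})\le\sup_{b\in[0,p_M]}|z'(b)|$ almost everywhere. With this domination, every surviving term on the right-hand side --- the $e$-terms just as much as $z(W_{N,k,\nu})\le\|W_{N,k,\nu}\|_{L^\infty}\sup_{[0,p_M]}|z'|$ and the $z'(p_{N,k,\nu})$ reaction term --- is bounded by $\sup_{[0,p_M]}|z'(b)|$ times quantities ($\|\partial_t\phi\|$, $\|\nabla\phi\|$, $\|\Delta\phi\|$, $\|\nabla W_{N,k,\nu}\|_{L^2(Q_T)}$, $\|G\|_{L^\infty}$, $T$) that Proposition \ref{prop-1} and Lemma \ref{lem-1} control uniformly in $N$, $k$, $\nu$, which is exactly the stated conclusion. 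Your sign arguments are fine; it is only this domination step, replacing $e(\bar n)\le C_e\bar n$, that you must add.
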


\begin{proof}
First, note that since $\alpha,\beta > 0$, and under our additional assumption, $e(\bar{n}) \sim n^{\alpha+2}$ satisfies $ne'(\bar{n}) \geq (\alpha + 2)e(\bar{n}) \geq 2e(\bar{n})$. Hence, $z'(p) = ne'(\bar{n})-e(\bar{n}) \geq e(\bar{n})$. 
Applying Lemma \ref{lem-4}, we obtain the inequality
\EQN{
 \iint_{Q_T}& \phi z''(W_{N,k,\nu}) \vert \nabla W_{N,k,\nu} \vert^2\, dxdt \\
 &\leq -\int_{\mathbb{R}^d} \phi(x,T) e(\bar{n}_{N,k,\nu}(x,T))\, dx + \int_{\mathbb{R}^d} \phi(x,0) e(\bar{n}_{N,k,\nu}(x,0))\, dx + \iint_{Q_T}\partial_t \phi e(\bar{n}_{N,k,\nu})\, dxdt \\
&\quad + \iint_{Q_T} \bigg[e(\bar{n}_{N,k,\nu}) \nabla W_{N,k,\nu} \cdot \nabla \phi + \phi (z'(p_{N,k,\nu})-z'(W_{N,k,\nu}))\Delta W_{N,k,\nu} + z(W_{N,k,\nu}) \Delta \phi \\
&\qquad\qquad\quad + \phi (e(\bar{n}_{N,k,\nu})+z'(p_{N,k,\nu})) \sum_{i=1}^N F_{N,k,\nu}^{(i)} G_i(p_{N,k,\nu})\bigg] dxdt.
}
Since $z'(p_{N,k,\nu})$ dominates $e(\bar{n}_{N,k,\nu})$, applying H\"older's inequality to handle the signs and using the bounds established above, we conclude the proof.
\end{proof}

\begin{lemma}\label{lem-7}
For any nonnegative test function $\phi \in C_c^\infty(Q_T)$, we have:
$$\iint_{Q_T} \phi \abs{\bar{n}_{N,k,\nu}-\bke{\frac{(k-1)}{k}W_{N,k,\nu}}^{\frac{1}{k-1}}}  \vert \nabla W_{N,k,\nu} \vert\, dxdt \leq C\nu^{\frac{1}{6}}.$$
\end{lemma}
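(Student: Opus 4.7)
The plan is to leverage the PDE structure relating $W_{N,k,\nu}$ to $p_{N,k,\nu}$ together with a three-way Hölder decomposition. Introduce the auxiliary pseudo-density $v := \bke{\tfrac{k-1}{k}W_{N,k,\nu}}^{1/(k-1)}$, so that $v^{k-1}=\tfrac{k-1}{k}W_{N,k,\nu}$ mirrors $\bar{n}_{N,k,\nu}^{k-1}=\tfrac{k-1}{k}p_{N,k,\nu}$. Substituting $p_{N,k,\nu}=W_{N,k,\nu}-\nu\Delta W_{N,k,\nu}$ yields the algebraic identity
\EQN{
\bar{n}_{N,k,\nu}^{k-1} - v^{k-1} = -\tfrac{k-1}{k}\,\nu\,\Delta W_{N,k,\nu}.
}
Combined with the elementary inequality $|a-b|^{k-1}\le|a^{k-1}-b^{k-1}|$ for $a,b\ge 0$ (which follows from the factorization $a^{k-1}-b^{k-1}=(a-b)\sum_{j=0}^{k-2}a^{j}b^{k-2-j}$ and the bound $\sum_{j}a^{j}b^{k-2-j}\ge\max(a,b)^{k-2}\ge|a-b|^{k-2}$), this produces the key pointwise estimate
\EQN{
|\bar{n}_{N,k,\nu}-v|^{k-1}\le \tfrac{k-1}{k}\,\nu\,\abs{\Delta W_{N,k,\nu}}.
}

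From Lemma \ref{lem-2}, $\norm{W_{N,k,\nu}-p_{N,k,\nu}}_{L^2(Q_T)}^2\le C\nu$, and the identity $\nu\Delta W_{N,k,\nu}=W_{N,k,\nu}-p_{N,k,\nu}$ then gives $\nu^2\norm{\Delta W_{N,k,\nu}}_{L^2(Q_T)}^2\le C\nu$. Squaring the pointwise bound above and integrating against $\phi$ yields the crucial smallness estimate
\EQN{
\iint_{Q_T}\phi\,|\bar{n}_{N,k,\nu}-v|^{2(k-1)}\,dxdt\le C\nu.
}

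Next I would apply Hölder's inequality with exponents $(6,3,2)$ satisfying $\tfrac{1}{6}+\tfrac{1}{3}+\tfrac{1}{2}=1$, splitting $|\bar{n}_{N,k,\nu}-v|=|\bar{n}_{N,k,\nu}-v|^{\alpha_1}|\bar{n}_{N,k,\nu}-v|^{\alpha_2}$ with $\alpha_1+\alpha_2=1$:
\EQN{
&\iint_{Q_T}\phi\,|\bar{n}_{N,k,\nu}-v|\,|\nabla W_{N,k,\nu}|\,dxdt\\
&\quad\le\bke{\iint\phi|\bar{n}_{N,k,\nu}-v|^{6\alpha_1}dxdt}^{1/6}\bke{\iint\phi|\bar{n}_{N,k,\nu}-v|^{3\alpha_2}dxdt}^{1/3}\norm{\phi^{1/2}|\nabla W_{N,k,\nu}|}_{L^2(Q_T)}.
}
Taking $\alpha_1=(k-1)/3$ makes $6\alpha_1=2(k-1)$, so the first factor is controlled by $(C\nu)^{1/6}=C\nu^{1/6}$ via the smallness estimate, while the third factor is bounded by Lemma \ref{lem-1}. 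The middle factor $\iint\phi|\bar{n}_{N,k,\nu}-v|^{4-k}\,dxdt$ has nonnegative exponent precisely when $k\le 4$, in which case Proposition \ref{prop-1} gives the uniform bound.

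The main obstacle I anticipate is extracting the same $\nu^{1/6}$ rate uniformly for $k>4$, where the above Hölder split forces a negative exponent in the middle factor. To handle this regime I would invoke Lemma \ref{lem-6} with $e(\bar{n})=\bar{n}^{k}/(k-1)$ to obtain the uniform weighted bound $\iint\phi\,g(W_{N,k,\nu})|\nabla W_{N,k,\nu}|^2\le C$, then split $Q_T$ by a threshold on $\bar{n}_{N,k,\nu}+v$: on the small region $|\bar{n}_{N,k,\nu}-v|$ is itself small, while on the large region the factor $S:=\sum_{j=0}^{k-2}\bar{n}_{N,k,\nu}^{j}v^{k-2-j}$ is bounded below, enabling one to invert the algebraic identity to bound $|\bar{n}_{N,k,\nu}-v|$ by a constant multiple of $\nu|\Delta W_{N,k,\nu}|$, then apply Cauchy--Schwarz and optimize the threshold against $\nu$ to recover the rate.
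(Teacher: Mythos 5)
Your argument for the range $2<k\le 4$ is correct and is a genuinely different mechanism from the paper's: the pointwise bound $|\bar n_{N,k,\nu}-v|^{k-1}\le\tfrac{k-1}{k}\nu|\Delta W_{N,k,\nu}|$, the bound $\nu\norm{\Delta W_{N,k,\nu}}_{L^2(Q_T)}\le C\sqrt\nu$ from Lemma \ref{lem-2}, and the $(6,3,2)$ H\"older split do give $C\nu^{1/6}$ there. The problem is that the lemma must hold with a constant independent of $k$ (it is used in Lemma \ref{lem-9} precisely in the joint limit $k\to\infty$, $\nu\to0$), and your treatment of $k>4$ is where the real difficulty sits — and your sketch does not close it. Carrying out your threshold argument: on $\{\bar n_{N,k,\nu}+v\le\theta\}$ you get a contribution $C\theta$, while on the complement the factor $S=\sum_{j=0}^{k-2}\bar n_{N,k,\nu}^{\,j}v^{\,k-2-j}$ is only bounded below by $(\theta/2)^{k-2}$, so inverting the identity costs a factor $(2/\theta)^{k-2}$ and yields a contribution $C(2/\theta)^{k-2}\sqrt\nu$; optimizing in $\theta$ gives a rate of order $\nu^{1/(2(k-1))}$ with $\theta\to2$ as $k\to\infty$, not $C\nu^{1/6}$ uniformly in $k$. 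The obstruction is structural: since $\norm{\bar n_{N,k,\nu}}_{L^\infty}$ and $\norm{v}_{L^\infty}$ tend to $1$ as $k\to\infty$, smallness of $\bar n_{N,k,\nu}^{k-1}-v^{k-1}=\tfrac{k-1}{k}(p_{N,k,\nu}-W_{N,k,\nu})$ carries essentially no information about $\bar n_{N,k,\nu}-v$ on the region where both lie below $1$, because $x\mapsto x^{k-1}$ flattens that entire region; any pointwise inversion of the algebraic identity there necessarily produces constants growing exponentially in $k$. The weighted bound $\iint\phi\,v\,|\nabla W_{N,k,\nu}|^2\le C$ you propose to import from Lemma \ref{lem-6} does not rescue this, since its weight $v$ degenerates exactly on that region.

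For comparison, the paper avoids pointwise inversion altogether: it splits according to whether the value of $W_{N,k,\nu}$ lies in the set $S_\delta$ where the map $s\mapsto s^{1/(k-1)}$ has difference quotient exceeding $1/\delta$ (a set of measure $\le\delta$ for large $k$). Off $S_\delta$ it uses the Lipschitz bound $\tfrac1\delta|p_{N,k,\nu}-W_{N,k,\nu}|$ together with Lemma \ref{lem-2}, giving $C\sqrt\nu/\delta$; on $S_\delta$ it applies the energy identity of Lemma \ref{lem-4} with $z''=\mathbf{1}_{S_\delta}$ to get $\iint\phi\,\mathbf{1}_{S_\delta}(W_{N,k,\nu})|\nabla W_{N,k,\nu}|^2\le C|S_\delta|\le C\delta$, so the smallness on the degenerate region comes from the measure of $S_\delta$, not from a derivative lower bound; Cauchy--Schwarz and the choice $\delta=\nu^{1/3}$ then give the $k$-uniform rate $\nu^{1/6}$. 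To repair your proof for $k>4$ you would need an ingredient of this dissipation-localized type rather than the threshold-and-invert scheme.
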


\begin{proof}
First, let $c_k = (\frac{k-1}{k})^{\frac{1}{k-1}}$. Then $c_k \leq 2$ for all $k$, and
$$\iint_{Q_T} \phi \abs{\bar{n}_{N,k,\nu}-\bke{\frac{(k-1)}{k}W_{N,k,\nu}}^{\frac{1}{k-1}}}  \vert \nabla W_{N,k,\nu} \vert\, dxdt = \iint_{Q_T} c_k \phi \abs{p_{N,k,\nu}^{\frac{1}{k-1}} - W_{N,k,\nu}^{\frac{1}{k-1}}}  \vert \nabla W_{N,k,\nu} \vert\, dxdt.$$
For any small $\delta > 0$, define $S_{\delta} := \bket{r>0 : \sup_{s>0} \frac{\vert s^{\frac{1}{k-1}} - r^{\frac{1}{k-1}} \vert}{\vert s-r \vert} \geq \frac{1}{\delta}}$.
Then we can estimate
\EQN{
\iint_{Q_T}& \phi \abs{\bar{n}_{N,k,\nu}-\bke{\frac{(k-1)}{k}W_{N,k,\nu}}^{\frac{1}{k-1}}}  \vert \nabla W_{N,k,\nu} \vert\, dxdt\\
& \leq \frac{1}{\delta} \iint_{Q_T} \phi \vert p_{N,k,\nu}-W_{N,k,\nu} \vert  \vert \nabla W_{N,k,\nu} \vert\, dxdt + \iint_{Q_T} \phi {\mathbf{1}}_{S_{\delta}}(W_{N,k,\nu})  \vert \nabla W_{N,k,\nu} \vert\, dxdt.
}

For the first integral, a standard argument using H\"older's  inequality together with Lemma \ref{lem-1} yields
$$\frac{1}{\delta} \iint_{Q_T} \phi \vert p_{N,k,\nu}-W_{N,k,\nu} \vert  \vert \nabla W_{N,k,\nu} \vert\, dxdt \leq C \frac{\sqrt{\nu}}{\delta}.$$

For the second integral, define $z'(p) := \int_0^p \mathbf{1}_{S_{\delta}}(t), dt$. Solving the associated ODE for $e(\bar{n})$, the pair $(e(\bar{n}),z(p))$ satisfies the hypotheses of Lemma \ref{lem-4}. Hence, by our previous estimates, the lemma yields
\EQN{
     \iint_{Q_T}& z''(W_{N,k,\nu}) \phi \abs{\nabla W_{N,k,\nu}}^2\, dxdt \\
    & = \int_{\mathbb{R}^d} \phi(x,0) e(\bar{n}_{N,k,\nu}(x,0))\, dx - \int_{\mathbb{R}^d} \phi(x,T) e(\bar{n}_{N,k,\nu}(x,T))\, dx \\
    &\quad - \iint_{Q_T} \bkt{\partial_t \phi e(\bar{n}_{N,k,\nu}) + e(\bar{n}_{N,k,\nu}) \nabla W_{N,k,\nu} \cdot \nabla \phi} dxdt \\
    &\quad + \iint_{Q_T} \bigg[\phi (z'(p_{N,k,\nu})-z'(W_{N,k,\nu}))\Delta W_{N,k,\nu} + z(W_{N,k,\nu}) \Delta \phi \\
    &\qquad\qquad\quad + \phi (e(\bar{n}_{N,k,\nu})+z'(p_{N,k,\nu})) \sum_{i=1}^N F_{N,k,\nu}^{(i)} G_i(p_{N,k,\nu}) \bigg] dxdt \\
    & \leq C_1 \norm{e(\bar{n}_{N,k,\nu})}_{L^{\infty}} + C_2 \norm{z'(W_{N,k,\nu})}_{L^{\infty}}.
}
By Lemma \ref{lem-4}, $e(\bar{n})$ is piecewise polynomial with each piece of degree at least $2$, so
$\norm{e(\bar{n}_{N,k,\nu})}_{L^{\infty}} \leq \norm{z'(p_{N,k,\nu})}_{L^{\infty}}$. 
Our $L^\infty$ estimates on $\bar{n}_{N,k,\nu}$, $p_{N,k,\nu}$, and $W_{N,k,\nu}$ ensure that these norms are uniformly bounded in $N,k$ and $\nu$. 
Moreover, a direct computation shows $\norm{z'(W)}_{L^\infty} \leq \mathbf{1}{S_\delta}(W)$. Therefore,
$$\iint_{Q_T} \phi {\mathbf{1}}_{S_{\delta}}(W_{N,k,\nu}) \vert \nabla W_{N,k,\nu} \vert^2\, dxdt \leq C \vert S_{\delta} \vert.$$
Applying H\"older's inequality, we conclude
$$\iint_{Q_T} \phi {\mathbf{1}}_{S_{\delta}}(W_{N,k,\nu})  \vert \nabla W_{N,k,\nu} \vert\, dxdt \leq \norm{\sqrt{\phi}}_{L^2(Q_T)}  \norm{\sqrt{\phi} {\mathbf{1}}_{S_{\delta}}(W_{N,k,\nu}) \vert \nabla W_{N,k,\nu} \vert}_{L^2(Q_T)} \leq \tilde{C} \sqrt{|S_{\delta}|}.$$
By definition of $S_\delta$, for all sufficiently large $k$, we have $\lvert S_\delta \rvert \leq \delta$. 
Hence, the second integral is bounded by $\tilde{C}\sqrt{\delta}$.

Combining both estimates, we obtain
$$\iint_{Q_T} \phi \abs{\bar{n}_{N,k,\nu}-\bke{\frac{k-1}{k}W_{N,k,\nu}}^{\frac{1}{k-1}}}  \vert \nabla W_{N,k,\nu} \vert\, dxdt \leq \tilde{C} \sqrt{\delta} + C\frac{\sqrt{\nu}}{\delta}.$$
Choosing $\delta = \nu^{1/3}$, we conclude
$$\iint_{Q_T} \phi \abs{\bar{n}_{N,k,\nu}-\bke{\frac{k-1}{k}W_{N,k,\nu}}^{\frac{1}{k-1}}}  \vert \nabla W_{N,k,\nu} \vert\, dxdt \leq C \nu^{1/6}.$$
\end{proof}

\section{Identification of limit}\label{sec:limit}
In this section, we use all the lemmas and other preliminary results to identify the equation that the limit function satisfies and prove the main theorem, Theorem \ref{thm 2}. Corollary \ref{lem-3} implies that $\bar{n}_{N,k,\nu}$ converges weakly (up to a subsequence) and $n_{N,k,\nu}(x,t,a)$ converges in the weak star topology (up to a subsequence). The combination of all our estimates in Section \ref{sec3} implies that $W_{N,k,\nu},p_{N,k,\nu}$ converges weakly (up to a subsequence). Since we already have all the weak convergence results we want, WLOG, let us denote our solutions at given values of $k,\nu$ as $W_{N, k,\nu}, p_{N, k,\nu}, n_{N,k,\nu}, \bar{n}_{N, k,\nu}$ respectively, and their weak (or weak-star) limit as $p,p,n,\bar{n}$.

We propose two lemmas that are needed to upgrade our weak convergence of $W_{N,k,\nu}$ and $\nabla W_{N,k,\nu}$ into strong convergence.

\begin{lemma}\label{lem-8}
Let $\ell_{k,\nu} : \mathbb{R} \to \mathbb{R}$ be a sequence of functions converging uniformly on compact subsets of $\mathbb{R}$ to a continuous increasing function $\ell_{\infty,0} : \mathbb{R} \to \mathbb{R}$. Suppose also that $\ell_{k,\nu}(p_{N, k,\nu})$ converges weakly in $L^1_{\mathrm{loc}}(Q_T)$ to a limit $\zeta$ such that $\zeta \leq \ell_{\infty,0}(p)$ almost everywhere. Then, for any nonnegative test function $\psi \in C^\infty_c(Q_T)$,
\begin{equation}
\limsup_{k \to \infty,\, \nu \to 0} \iint_{Q_T} \psi  \bar{n}_{N, k,\nu} \bke{\sum_{i=1}^N F_{N,k,\nu}^{(i)} G_i(p_{N, k,\nu})} (\ell_{k,\nu}(p_{N, k,\nu}) - \ell_{\infty,0}(p))\, dxdt \leq 0.
\end{equation}

\end{lemma}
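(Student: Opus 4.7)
The plan is to decompose
\[
\ell_{k,\nu}(p_{N,k,\nu}) - \ell_{\infty,0}(p) = A_{k,\nu} + B_{k,\nu},
\]
with $A_{k,\nu} := \ell_{k,\nu}(p_{N,k,\nu}) - \ell_{\infty,0}(p_{N,k,\nu})$ and $B_{k,\nu} := \ell_{\infty,0}(p_{N,k,\nu}) - \ell_{\infty,0}(p)$, and to treat the two pieces separately. The $A$-piece is routine: by Proposition~\ref{prop-1} we have $\sup_{N,k,\nu}\norm{p_{N,k,\nu}}_{L^\infty(Q_T)} \le p_M$, so uniform convergence of $\ell_{k,\nu}$ to $\ell_{\infty,0}$ on $[0,p_M]$ forces $\norm{A_{k,\nu}}_{L^\infty(Q_T)}\to 0$. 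Combined with the bound $\bigl|\bar n_{N,k,\nu}\sum_{i=1}^N F_{N,k,\nu}^{(i)} G_i(p_{N,k,\nu})\bigr| \le \norm{G}_{L^\infty}\norm{\bar n_{N,k,\nu}}_{L^\infty(Q_T)}$ and the compact support of $\psi$, the $A$-contribution vanishes in the triple limit.

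The heart of the argument is the $B$-piece. The same uniform convergence gives $\ell_{\infty,0}(p_{N,k,\nu}) = \ell_{k,\nu}(p_{N,k,\nu}) + o(1)_{L^\infty}$, hence $\ell_{\infty,0}(p_{N,k,\nu}) \rightharpoonup \zeta \le \ell_{\infty,0}(p)$ weakly in $L^1_{\rm loc}(Q_T)$ and $B_{k,\nu} \rightharpoonup \zeta - \ell_{\infty,0}(p) \le 0$. The difficulty is that the weight $\psi\,\bar n_{N,k,\nu}\sum_{i=1}^N F_{N,k,\nu}^{(i)}G_i(p_{N,k,\nu})$ is itself only weakly convergent, so the standard weak--weak pairing cannot be closed directly. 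To force the pairing I would apply Lemma~\ref{lem-4} with a convex pair $(e,z)$ satisfying $e(\bar n)+z'(p) = \bar n\,\ell_{\infty,0}(p)$, equivalently $e'(\bar n) = \ell_{\infty,0}(p(\bar n))$ together with $z'(p) = \bar n(p)\,\ell_{\infty,0}(p) - e(\bar n(p))$. With this choice, the left-hand side of Lemma~\ref{lem-4}'s identity is exactly
\[
\iint_{Q_T}\psi\,\bar n_{N,k,\nu}\,\ell_{\infty,0}(p_{N,k,\nu})\bke{\sum_{i=1}^N F_{N,k,\nu}^{(i)}G_i(p_{N,k,\nu})}\,dxdt,
\]
which is thereby rewritten as a sum of boundary terms, the transport integral $\iint e(\bar n_{N,k,\nu})\nabla W_{N,k,\nu}\cdot\nabla\psi\,dxdt$, and the dissipation pieces $\iint\psi(z'(p)-z'(W))\Delta W\,dxdt$, $\iint z(W)\Delta\psi\,dxdt$, and $-\iint z''(W)\psi|\nabla W|^2\,dxdt$. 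Each admits a limit via Section~\ref{sec:apriori}: the transport term passes using the strong convergence of $\nabla W_{N,k,\nu}$ promised in Section~\ref{sec:main-results}, the $(z'(p)-z'(W))\Delta W$ contribution is controlled by the $O(\sqrt\nu)$ closeness from Lemma~\ref{lem-2}, and the $z''(W)|\nabla W|^2$ term is handled exactly as in Lemmas~\ref{lem-6}--\ref{lem-7}. Subtracting the trivial identity for $\iint\psi\,\bar n_{N,k,\nu}\,\ell_{\infty,0}(p)\sum_{i=1}^N F_{N,k,\nu}^{(i)} G_i(p_{N,k,\nu})\,dxdt$ (for which the weak convergence of the reaction against the fixed $L^\infty$-function $\psi\,\ell_{\infty,0}(p)$ passes directly) and invoking the hypothesis $\zeta\le\ell_{\infty,0}(p)$ at the final comparison produces the nonpositive limsup.

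The main obstacle will be the rigid structural requirement in Lemma~\ref{lem-4}, namely that $(e,z)$ be piecewise $C^2$ with second derivatives of the form $c\bar n^\alpha+c'$ and $bp^\beta+b'$. For a general continuous increasing $\ell_{\infty,0}$, the $e$ prescribed above does not have this structure. I would circumvent this by first approximating $\ell_{\infty,0}$ uniformly on $[0,p_M]$ by a sequence of increasing piecewise polynomial functions $\tilde\ell_m$, running the argument above with $\tilde\ell_m$ (which does produce an admissible pair), and then sending $m\to\infty$ last, using the uniform $L^\infty$ bounds on $p_{N,k,\nu}$ and $\bar n_{N,k,\nu}$ to control the approximation error. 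Coordinating this quadruple limit $(N,k,\nu,m)$ while preserving the one-sided bound furnished by $\zeta\le\ell_{\infty,0}(p)$ is the delicate point; once this is handled, the sign information propagates through and yields the stated inequality.
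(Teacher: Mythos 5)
Your reduction of the $A$-piece (replacing $\ell_{k,\nu}(p_{N,k,\nu})$ by $\ell_{\infty,0}(p_{N,k,\nu})$ via the uniform $L^\infty$ bound on $p_{N,k,\nu}$) is exactly the paper's first step and is fine. The treatment of the $B$-piece, however, has genuine gaps. First, it is circular: you pass to the limit in the transport term $\iint e(\bar n_{N,k,\nu})\nabla W_{N,k,\nu}\cdot\nabla\psi\,dxdt$ (and implicitly in the $z''(W_{N,k,\nu})|\nabla W_{N,k,\nu}|^2$ term) by invoking the strong convergence of $\nabla W_{N,k,\nu}$ ``promised in Section \ref{sec:main-results}''; but that strong convergence is only established in Lemmas \ref{lem-9}--\ref{lem-10}, and Lemma \ref{lem-9} is proved \emph{using} Lemma \ref{lem-8}. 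Second, the term $\iint\psi\,(z'(p_{N,k,\nu})-z'(W_{N,k,\nu}))\Delta W_{N,k,\nu}\,dxdt$ is not killed by Lemma \ref{lem-2}: one has $\norm{p_{N,k,\nu}-W_{N,k,\nu}}_{L^2}=O(\sqrt\nu)$ but only $\norm{\Delta W_{N,k,\nu}}_{L^2}=O(\nu^{-1/2})$, so this contribution is of size $\nu\iint\psi(\Delta W_{N,k,\nu})^2$, which is merely $O(1)$; the paper's closing Remark stresses precisely that $\nu(\Delta W_{N,k,\nu})^2\to0$ in $L^1$ is \emph{not} available, so your Lemma-\ref{lem-4} rewriting leaves a non-vanishing error with the wrong sign control for an upper bound. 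Third, the hypothesis $\zeta\le\ell_{\infty,0}(p)$ never concretely enters your rewritten expression: after the energy identity the integrand consists of $e(\bar n_{N,k,\nu})$, $z(W_{N,k,\nu})$, $z'(p_{N,k,\nu})$, $z'(W_{N,k,\nu})$ and $\nabla W_{N,k,\nu}$, and you do not explain how the weak limit $\zeta$ of $\ell_{k,\nu}(p_{N,k,\nu})$ re-emerges from these terms, so the ``final comparison'' is not actually set up.

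The missing idea is the monotonicity mechanism that the paper uses and your proposal never touches: since $\partial G/\partial p\le -c<0$ and $\ell_{\infty,0}$ is increasing, one has pointwise
\begin{equation*}
\Bke{\sum_{i=1}^N F_{N,k,\nu}^{(i)}\bigl(G_i(p_{N,k,\nu})-G_i(p)\bigr)}\bigl(\ell_{\infty,0}(p_{N,k,\nu})-\ell_{\infty,0}(p)\bigr)\le 0 ,
\end{equation*}
where $\Bke{\cdot}$ denotes ordinary parentheses. Adding and subtracting $G_i(p)$ therefore splits the $B$-piece into a pointwise nonpositive part plus a part in which the oscillating factor $\ell_{\infty,0}(p_{N,k,\nu})$ is paired against the \emph{fixed} function $\psi\,G_i(p)$; this is exactly what breaks the weak--weak pairing deadlock, after which the hypothesis $\zeta\le\ell_{\infty,0}(p)$ (together with $G_i(p)\ge0$) closes the estimate with no energy identity, no strong convergence of $\nabla W_{N,k,\nu}$, and no extra approximation parameter $m$. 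Your route, by contrast, imports the entire entropy machinery of Lemma \ref{lem-4} with a $k$-dependent pair $(e,z)$ and a quadruple limit, and still cannot close because of the circularity and the $\nu(\Delta W_{N,k,\nu})^2$ obstruction above.
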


\begin{proof}
By the assumption that $\ell_{k,\nu} \to \ell_{\infty,0}$ uniformly on compact subsets of $\mathbb{R}$, it suffices to show the following weaker estimate:
\begin{equation}
\limsup_{k \to \infty,\, \nu \to 0} \iint_{Q_T} \psi  \bar{n}_{N, k,\nu} \bke{\sum_{i=1}^N F_{N,k,\nu}^{(i)} G_i(p_{N, k,\nu})} (\ell_{\infty,0}(p_{N, k,\nu}) - \ell_{\infty,0}(p))\,dxdt \leq 0.
\end{equation}

We now use the key monotonicity structure: since $G$ is decreasing and $\ell_{\infty,0}$ is increasing, it follows that
\begin{equation}
\bke{\sum_{i=1}^N F_{N,k,\nu}^{(i)} (G_i(p_{N, k,\nu}) - G_i(p))}(\ell_{\infty,0}(p_{N, k,\nu}) - \ell_{\infty,0}(p)) \leq 0 \quad \text{almost everywhere}.
\end{equation}
Using this observation, we rewrite
\EQN{
\iint_{Q_T}& \psi \bar{n}_{N, k,\nu} \bke{\sum_{i=1}^N F_{N,k,\nu}^{(i)} G_i(p_{N, k,\nu})} (\ell_{\infty,0}(p_{N, k,\nu}) - \ell_{\infty,0}(p))\, dxdt \\
&= \iint_{Q_T} \psi \bar{n}_{N, k,\nu} \bke{\sum_{i=1}^N F_{N,k,\nu}^{(i)} G_i(p)}(\ell_{\infty,0}(p_{N, k,\nu}) - \ell_{\infty,0}(p))\, dxdt \\
&\quad + \iint_{Q_T} \psi \bar{n}_{N, k,\nu} \bke{\sum_{i=1}^N F_{N,k,\nu}^{(i)} (G_i(p_{N, k,\nu})-G_i(p))}(\ell_{\infty,0}(p_{N, k,\nu}) - \ell_{\infty,0}(p))\, dxdt.
}

The second term on the right-hand side is nonpositive by the sign condition on $G_i(\cdot) = G(\cdot,\frac{i}{N})$ and the monotonicity of $l_{\infty,0}$. 
Therefore,
\begin{align*}
\iint_{Q_T} \psi \bar{n}_{N, k,\nu} &\bke{\sum_{i=1}^N F_{N,k,\nu}^{(i)} G_i(p_{N, k,\nu})}(\ell_{\infty,0}(p_{N, k,\nu}) - \ell_{\infty,0}(p))\, dxdt\\ 
&\leq \iint_{Q_T} \psi \bar{n}_{N, k,\nu} \bke{\sum_{i=1}^N F_{N,k,\nu}^{(i)} G_i(p)}(\ell_{\infty,0}(p_{N, k,\nu}) - \ell_{\infty,0}(p))\, dxdt.
\end{align*}
Hence, it suffices to show
\begin{equation}
\limsup_{k \to \infty,\, \nu \to 0} \iint_{Q_T} \psi  \bar{n}_{N, k,\nu} \bke{\sum_{i=1}^N F_{N,k,\nu}^{(i)} G_i(p)}(\ell_{\infty,0}(p_{N, k,\nu}) - \ell_{\infty,0}(p))\, dxdt \leq 0.
\end{equation}
To justify this, note that from the tail estimate in Lemma \ref{lem-n-tail}, together with the $L^1$ and $L^\infty$ bounds on $\bar{n}_{N,k,\nu}$ and $p_{N,k,\nu}$ in Proposition \ref{prop-1}, it follows that $\bar{n}_{N,k,\nu}p_{N,k,\nu}$ converges weakly to $\bar{n}p$. Since $\ell_{\infty,0}(p)$ is equicontinuous in $p$, this implies that $\bar{n}_{N,k,\nu},\ell_{\infty,0}(p_{N,k,\nu})$ converges weakly to $n\zeta$.
Consequently, 
\begin{equation}
\lim_{k \to \infty,\, \nu \to 0} \iint_{Q_T} \psi  \bar{n}_{N, k,\nu} \ell_{\infty,0}(p_{N, k,\nu})\, dxdt = \iint_{Q_T} \psi  n \zeta\, dxdt.
\end{equation}

Finally, since $G_i(p) \geq 0$ for all $i$ and $\zeta \leq \ell_{\infty,0}(p)$ almost everywhere, we obtain
\begin{align*}
\limsup_{k \to \infty,\, \nu \to 0} \iint_{Q_T} \psi \bar{n}_{N, k,\nu}& \bke{\sum_{i=1}^N F_{N,k,\nu}^{(i)} G_i(p)}(\ell_{\infty,0}(p_{N, k,\nu}) - \ell_{\infty,0}(p))\, dxdt \\
&\leq \iint_{Q_T} \psi \bke{\sum_{i=1}^N F_{N,k,\nu}^{(i)} G_i(p)}(\bar{n} \zeta - n \ell_{\infty,0}(p))\, dxdt \leq 0.
\end{align*}
This concludes the proof.
\end{proof}

\begin{lemma}\label{lem-9}
$\bke{\frac{k-1}{k}W_{N, k,\nu}}^{\frac{1}{k-1}} \nabla W_{N, k,\nu} \to \nabla p$ strongly in $L^2_{\mathrm{loc}}(0,T; L^2(\mathbb{R}^d))$ as $k \to \infty$, $\nu \to 0$ and $N \to \infty$.
\end{lemma}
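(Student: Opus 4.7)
The plan is to first reframe the expression as a gradient, extract weak convergence, and then match $L^2$-norms via the energy-evolution equation (EEE) of Lemma \ref{lem-4} combined with the complementarity encoded in Lemma \ref{lem-p-weak}.

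I begin with the key algebraic identity
\[
\bke{\tfrac{k-1}{k}W_{N,k,\nu}}^{1/(k-1)}\nabla W_{N,k,\nu} = \nabla F_k(W_{N,k,\nu}), \qquad F_k(w) := \bke{\tfrac{k-1}{k}w}^{k/(k-1)},
\]
which reduces the claim to $\nabla F_k(W_{N,k,\nu})\to\nabla p$ strongly in $L^2_{\mathrm{loc}}$. As a preliminary observation, strong $L^2_{\mathrm{loc}}$-convergence $W_{N,k,\nu}\to p$ follows from the identity $\iint\phi W_{N,k,\nu}^2 = \iint\phi W_{N,k,\nu} p_{N,k,\nu} + \nu\iint\phi W_{N,k,\nu}\Delta W_{N,k,\nu}$ (using $W-p=\nu\Delta W$) combined with the already-established weak convergence $W_{N,k,\nu}\rightharpoonup p$ and the vanishing of the $\nu$-term as $\nu\to0$ by Lemma \ref{lem-1}, yielding $\iint\phi W^2\to\iint\phi p^2$. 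Since $F_k\to\mathrm{id}$ uniformly on $[0,M]$ (with $M$ the $L^\infty$-bound of Proposition \ref{prop-1}), we obtain $F_k(W_{N,k,\nu})\to p$ in $L^2_{\mathrm{loc}}$, and since $\nabla F_k(W_{N,k,\nu}) = F_k'(W_{N,k,\nu})\nabla W_{N,k,\nu}$ is uniformly bounded in $L^2$ by Proposition \ref{prop-1} and Lemma \ref{lem-1}, it converges weakly in $L^2_{\mathrm{loc}}$ to $\nabla p$. Weak lower semicontinuity then yields $\liminf\iint\phi|\nabla F_k(W_{N,k,\nu})|^2\geq\iint\phi|\nabla p|^2$ for every nonnegative $\phi\in C_c^\infty(Q_T)$.

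For the matching $\limsup$-bound, I apply Lemma \ref{lem-4} with $(e,z)$ chosen so that $z''(w) = F_k'(w)^2 = \bke{\tfrac{k-1}{k}w}^{2/(k-1)}$, which makes the EEE left-hand side exactly $\iint\phi|\nabla F_k(W_{N,k,\nu})|^2$. Integration yields $z'(w) = \tfrac{k}{k+1}\bke{\tfrac{k-1}{k}w}^{(k+1)/(k-1)}\to w$ and $z(w)\to w^2/2$ uniformly on $[0,M]$, while the constraint $\bar n e'(\bar n)-e(\bar n) = z'(p)$ with $p=\tfrac{k}{k-1}\bar n^{k-1}$ forces $e(\bar n) = \tfrac{\bar n^{k+1}}{k+1}$, which is $O(1/k)$ in $L^\infty$ so all $e$-weighted contributions vanish in the limit. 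The mixed term $\iint\phi(z'(p_{N,k,\nu})-z'(W_{N,k,\nu}))\Delta W_{N,k,\nu}$ is nonpositive by monotonicity of $z'$ together with $\Delta W_{N,k,\nu} = (W_{N,k,\nu}-p_{N,k,\nu})/\nu$, so it may be dropped from the upper bound. The remaining terms pass to their limits as follows: $\iint z(W_{N,k,\nu})\Delta\phi\to\iint\tfrac{p^2}{2}\Delta\phi$ by strong $L^2_{\mathrm{loc}}$-convergence of $W$ with uniform convergence of $z$, and $\iint\phi z'(p_{N,k,\nu})\sum_i F_{N,k,\nu}^{(i)}G_i(p_{N,k,\nu}) \to \iint\phi\,p\int_0^1 n(x,t,a)G(p,a)\,da\,dxdt$ via the strong $L^2_{\mathrm{loc}}$-convergence of $p_{N,k,\nu}$, Lemma \ref{lem-8} applied with $\ell_{k,\nu}=z'$ for the nonlinear dependence, and Proposition \ref{prop-2} for the Riemann-sum in $a$. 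Combining these yields
\[
\limsup\iint\phi|\nabla F_k(W_{N,k,\nu})|^2\,dxdt \leq \iint\tfrac{p^2}{2}\Delta\phi\,dxdt + \iint\phi\,p\int_0^1 n(x,t,a)G(p,a)\,da\,dxdt.
\]

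The argument is closed via the complementarity $p(\Delta p + \int_0^1 n(a)G(p,a)\,da) = 0$, obtained by passing to the limit $k\to\infty$ in Lemma \ref{lem-p-weak} with $V=0$. This lets one substitute $\iint\phi\,p\int nG\,da = -\iint\phi\,p\Delta p$; integrating by parts and pairing with $\iint\tfrac{p^2}{2}\Delta\phi = -\iint p\nabla p\cdot\nabla\phi$ cancels the cross terms, leaving $\limsup\leq\iint\phi|\nabla p|^2$. Combined with the $\liminf$-bound this gives $L^2$-norm convergence, hence strong convergence. The main obstacle is the coordinated passage to the limit in the reaction term, where the three limits $N\to\infty$, $k\to\infty$, $\nu\to0$ must be handled simultaneously: Lemma \ref{lem-8} supplies the one-sided control needed for the nonlinear $z'(p_{N,k,\nu})$, and the $a$-variance estimate of Proposition \ref{prop-2} controls the Riemann-sum error. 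A secondary subtlety is the rigorous interpretation of $\iint\phi\,p\Delta p$ as an $L^1$-integral, which follows from $p\in L^\infty$, $\nabla p\in L^2_{\mathrm{loc}}$, and $\Delta p\in L^1_{\mathrm{loc}}$ on $\{p>0\}$ via the complementarity relation.
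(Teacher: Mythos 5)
Your choice of the energy pair is the same as the paper's (your $z''(w)=\bke{\tfrac{k-1}{k}w}^{2/(k-1)}$, $e(\bar n)=\tfrac{\bar n^{k+1}}{k+1}$ is exactly what the paper feeds into Corollary \ref{lem-5} with $\eta(t)=t$), and your treatment of the reaction term via Lemma \ref{lem-8} and the sign of the mixed term $(z'(p_{N,k,\nu})-z'(W_{N,k,\nu}))\Delta W_{N,k,\nu}$ matches the paper. However, there are two genuine gaps. First, your ``preliminary observation'' that $W_{N,k,\nu}\to p$ strongly in $L^2_{\rm loc}$ is unjustified: the identity $\iint\phi W_{N,k,\nu}^2=\iint\phi W_{N,k,\nu}p_{N,k,\nu}+\nu\iint\phi W_{N,k,\nu}\Delta W_{N,k,\nu}$ reduces the claim to passing to the limit in $\iint\phi W_{N,k,\nu}p_{N,k,\nu}$, which is a product of two sequences each of which is known to converge only \emph{weakly}; weak convergence of the factors does not give convergence of the product, and upgrading exactly this weak convergence to strong convergence is the whole point of Lemmas \ref{lem-9}--\ref{lem-10}. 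Everything downstream that invokes ``strong $L^2_{\rm loc}$-convergence of $p_{N,k,\nu}$'' or of $W_{N,k,\nu}$ (including $F_k(W_{N,k,\nu})\to p$ in $L^2_{\rm loc}$ and the identification of the reaction term à la Lemma \ref{lem-nGp}) therefore rests on results that, in the paper's logical order, are consequences of Lemma \ref{lem-9}, not inputs to it.

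Second, your closing step is circular: the complementarity relation $p\bke{\Delta p+\int_0^1 n\,G(p,a)\,da}=0$ is the paper's Lemma \ref{lem-p}, whose proof \emph{uses} Lemmas \ref{lem-9}--\ref{lem-10}. It cannot be obtained ``by passing to the limit $k\to\infty$ in Lemma \ref{lem-p-weak} with $V=0$'' from weak convergence alone, because one cannot identify the weak limit of $p_{N,k,\nu}\Delta W_{N,k,\nu}$ without strong convergence of $\nabla W_{N,k,\nu}$ (or control of $\nu(\Delta W_{N,k,\nu})^2$); the paper's final remark makes precisely this point, showing that without the strong-convergence machinery one only gets a supersolution inequality through a defect measure. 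The paper avoids both problems by not identifying the limit of the reaction term at all at this stage: it names the weak limit $R$ of $\bar n_{N,k,\nu}\sum_i F^{(i)}_{N,k,\nu}G_i(p_{N,k,\nu})$, bounds the $\limsup$ of the right-hand side by $\iint tRp$ via Lemma \ref{lem-8} (with $\ell_{\infty,0}(p)=p$), and compares with the energy identity of the limit system, $\iint t|\nabla p|^2=\iint tRp$, thus needing neither complementarity nor strong convergence of $p_{N,k,\nu}$. A lesser point: your $\limsup$ bound is localized by a nonnegative $\phi\in C_c^\infty$, whereas the statement concerns $L^2_{\rm loc}$ in time but global in space; the paper's use of $\eta(t)=t$ in Corollary \ref{lem-5} handles all of $\mathbb{R}^d$ directly. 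To repair your argument you would need to replace both the strong-convergence shortcut and the complementarity step by the paper's comparison with the limit energy identity (or supply an independent proof of the latter).
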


\begin{proof}
By Corollary \ref{lem-5} and choosing the test function $\eta(t) = t$, we obtain for the approximate solutions:
\begin{equation}
\begin{aligned}
     \int_{\mathbb{R}^d} T \bke{\frac{\bar{n}_{N, k,\nu}(x,T)^{k+1}}{k+1}} dx
    + \iint_{Q_T}& t \abs{\bke{\frac{k-1}{k}W_{N, k,\nu}}^{\frac{1}{k-1}} \nabla W_{N, k,\nu} }^2 dxdt
    - \iint_{Q_T} \frac{\bar{n}_{N, k,\nu}^{k+1}}{k+1}\, dxdt \\
    & \leq \iint_{Q_T} t \bar{n}_{N, k,\nu} \bke{\sum_{i=1}^N F_{N,k,\nu}^{(i)} G_i(p_{N, k,\nu})} \bke{\frac{k}{k-1}}^\frac{k-2}{k-1} p_{N, k,\nu}^\frac{k}{k-1}\, dxdt.
\end{aligned}
\end{equation}

For the limiting functions $\bar{n}$ and $p$, following the notation of \cite{DJK-arXiv2025}, let $R$ denote the weak limit of the term $\bar{n}_{N,k,\nu} \sum_{i=1}^N F_{N,k,\nu}^{(i)}G_i(p_{N,k,\nu})$.
Such a weak limit exists due to our assumptions on $G$, the conditions $F_{N,k,\nu}^{(i)} \geq 0$ and $\sum_{i=1}^N F_{N,k,\nu}^{(i)} = 1$, the $L^\infty$ bounds for $p_{N,k,\nu}$, and the $L^2$ bounds for $\bar{n}_{N,k,\nu}$ given in Proposition \ref{prop-1}. The corresponding energy dissipation identity then reads:
\begin{equation}
\iint_{Q_T} t |\nabla p|^2\, dxdt = \iint_{Q_T} tRp\, dxdt.
\end{equation}

Our goal is to compare the last two equations as $k \to \infty$, $\nu \to 0$, and $N \to \infty$, proceeding term by term.

By weak convergence of $\bar{n}_{N,k,\nu}$ and the convexity of $x \mapsto \frac{x^{k+1}}{k+1}$, we obtain
\begin{equation}
0 \leq \liminf_{N \to \infty,\, k \to \infty,\, \nu \to 0} \int_{\mathbb{R}^d} T \bke{\frac{\bar{n}_{N, k,\nu}(x,T)^{k+1}}{k+1}} dx.
\end{equation}
By a direct computation and the $L^\infty$ bound on $\bar{n}_{N,k,\nu}$, we see that $\frac{\bar{n}_{N,k,\nu}^{k+1}}{k+1} \to 0$ uniformly. 
Thus,
\begin{equation}
\liminf_{N \to \infty,\, k \to \infty,\, \nu \to 0} \iint_{Q_T} -\frac{\bar{n}_{N, k,\nu}^{k+1}}{k+1}\, dxdt = 0.
\end{equation}
We now consider
\begin{equation}
\begin{aligned}
 \limsup_{k \to \infty,\, \nu \to 0} \iint_{Q_T}& t \bar{n}_{N, k,\nu} \bke{\sum_{i=1}^N F_{N,k,\nu}^{(i)} G_i(p_{N, k,\nu})} \bke{\frac{k}{k-1}}^\frac{k-2}{k-1} p_{N, k,\nu}^\frac{k}{k-1}\, dxdt \\
&\leq  \limsup_{k \to \infty,\, \nu \to 0}\iint_{Q_T} t \bar{n}_{N, k,\nu}\bke{\sum_{i=1}^N F_{N,k,\nu}^{(i)} G_i(p_{N, k,\nu})}p\, dxdt 
= \iint_{Q_T} tRp\, dxdt.
\end{aligned}
\end{equation}
This follows from Lemma \ref{lem-8} by taking $\ell_{k,\nu}(p) = \bigl(\tfrac{k}{k-1}\bigr)^{\tfrac{k-2}{k-1}} p_{N,k,\nu}^{\tfrac{k}{k-1}}$ and $\ell_{\infty,0}(p)=p$.

Combining the above results, we conclude
\begin{equation}\label{eq-L2-upper-bound}
\limsup_{N \to \infty,\, k \to \infty,\, \nu \to 0} \iint_{Q_T} t \abs{\bke{\frac{k-1}{k}W_{N, k,\nu}}^{\frac{1}{k-1}} \nabla W_{N, k,\nu} }^2 dxdt 
\leq \int_{Q_T} t |\nabla p|^2\, dxdt.
\end{equation}

By Lemmas \ref{lem-2} and \ref{lem-7}, we have $\bke{\frac{k}{k-1}}^\frac{k-2}{k-1} W_{N, k,\nu}^\frac{k}{k-1} \rightharpoonup p$ and $\bke{\frac{k-1}{k}W_{N, k,\nu}}^{\frac{1}{k-1}} \nabla W_{N, k,\nu} \rightharpoonup \nabla p$ weakly in $L^2$. 
Therefore, the $L^2$ upper bound \eqref{eq-L2-upper-bound}, combined with the weak convergence, implies
\begin{equation}
\bke{\frac{k-1}{k}W_{N, k,\nu}}^{\frac{1}{k-1}} \nabla W_{N, k,\nu} \to \nabla p\quad  \text{ strongly in }\  L^2_{\mathrm{loc}}(0,T; L^2(\mathbb{R}^d)).
\end{equation}
\end{proof}

\begin{lemma}\label{lem-10}
Up to a subsequence, $W_{N, k,\nu} \to p$ strongly in $L^2_{\mathrm{loc}}(0,T; H^1_{\mathrm{loc}}(\mathbb{R}^d))$ and $p_{N, k,\nu} \to p$ strongly in $L^2_{\mathrm{loc}}(Q_T)$ as $k \to \infty$ and $\nu \to 0$.
\end{lemma}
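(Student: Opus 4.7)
The plan is to combine the strong convergence of the weighted gradient from Lemma \ref{lem-9} with the Brinkman closeness $\norm{W_{N,k,\nu} - p_{N,k,\nu}}_{L^2(Q_T)} = O(\sqrt{\nu})$ from Lemma \ref{lem-2} to upgrade all weak convergences to strong ones. Since $W_{N,k,\nu}$ and $p_{N,k,\nu}$ share the same weak $L^2$ limit $p$ and their difference is $O(\sqrt{\nu})$ in $L^2(Q_T)$, strong $L^2_{\mathrm{loc}}$ convergence of one implies strong $L^2_{\mathrm{loc}}$ convergence of the other; it thus suffices to prove $W_{N,k,\nu} \to p$ strongly in $L^2(0,T; H^1_{\mathrm{loc}}(\R^d))$.

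For the strong $L^2_{\mathrm{loc}}$ convergence of $W_{N,k,\nu}$ itself, I would introduce the primitive $u_{N,k,\nu} := \bke{\frac{k-1}{k} W_{N,k,\nu}}^{k/(k-1)}$, chosen so that $\nabla u_{N,k,\nu} = \bke{\frac{k-1}{k} W_{N,k,\nu}}^{1/(k-1)} \nabla W_{N,k,\nu}$, which by Lemma \ref{lem-9} converges to $\nabla p$ strongly in $L^2_{\mathrm{loc}}$. A direct pointwise computation using the uniform $L^\infty$ bound on $W$ from Proposition \ref{prop-1} yields $\norm{u_{N,k,\nu} - W_{N,k,\nu}}_{L^\infty(Q_T)} \to 0$ as $k \to \infty$, so $u_{N,k,\nu} \rightharpoonup p$ weakly in $L^2$ and it is enough to prove strong convergence of $u_{N,k,\nu}$. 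I would then invoke an Aubin--Lions argument on $\{u_{N,k,\nu}\}$: spatial compactness follows from the uniform $L^2(0,T; H^1_{\mathrm{loc}})$ bound on $u$ (the strong convergence of $\nabla u$ combined with the $L^\infty$ bound), and time compactness is transferred from the uniform $L^2_{\mathrm{loc}}(0,T; H^{-1})$ bound on $\partial_t \bar{n}_{N,k,\nu}$ in Corollary \ref{lem-3} via the algebraic relations among $\bar{n}, p, W, u$ and the $L^\infty$ bounds. This produces strong convergence of $u_{N,k,\nu}$ in $L^2_{\mathrm{loc}}(Q_T)$, and hence of $W_{N,k,\nu}$ to $p$.

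For the strong $L^2_{\mathrm{loc}}$ convergence of $\nabla W_{N,k,\nu}$, I would test the Brinkman relation $W_{N,k,\nu} - \nu \Delta W_{N,k,\nu} = p_{N,k,\nu}$ against a nonnegative $\phi \in C_c^\infty(Q_T)$. A localized integration by parts argument mirroring the proof of Lemma \ref{lem-1}, combined with $W_{N,k,\nu} - p_{N,k,\nu} = \nu \Delta W_{N,k,\nu}$ and Lemma \ref{lem-2}, yields a local energy inequality of the shape $\iint \phi |\nabla W_{N,k,\nu}|^2 \le -\iint \phi\, p_{N,k,\nu} \Delta W_{N,k,\nu} + o(1)$, where the $o(1)$ remainder collects products of $(W_{N,k,\nu}-p_{N,k,\nu})$ with $\nabla \phi \cdot \nabla W_{N,k,\nu}$ and vanishes by Lemma \ref{lem-2} and the uniform $L^2$ bound on $\nabla W$ from Lemma \ref{lem-1}. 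Passing to the limit on the right-hand side using the bound \eqref{eq-bound-pDeltaW} and matching it against the limiting energy identity $\iint |\nabla p|^2 = \iint R p$ already used in the proof of Lemma \ref{lem-9} gives $\limsup \iint \phi |\nabla W_{N,k,\nu}|^2 \le \iint \phi |\nabla p|^2$, which combined with the matching $\liminf$ from weak lower semicontinuity upgrades the weak convergence $\nabla W_{N,k,\nu} \rightharpoonup \nabla p$ to strong convergence in $L^2_{\mathrm{loc}}$.

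The main obstacle is the time-compactness transfer in the second paragraph: the natural uniform-in-$N,k,\nu$ time regularity lives on $\bar{n}_{N,k,\nu}$ via Corollary \ref{lem-3}, whereas we need an analogous estimate on $u_{N,k,\nu}$, and the nonlinear pressure law $p = \frac{k}{k-1}\bar{n}^{k-1}$ couples these through a $k$-dependent factor that is not obviously controllable in the joint limit. Working directly with $u_{N,k,\nu}$ rather than $p_{N,k,\nu}$, and carefully absorbing the powers using the uniform $L^\infty$ bounds together with Lemma \ref{lem-2}, is where the bulk of the technical effort lies.
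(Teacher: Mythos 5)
Your reduction via Lemma \ref{lem-2} (trading $p_{N,k,\nu}$ for $W_{N,k,\nu}$) and the introduction of the primitive $u_{N,k,\nu}$ with $\nabla u_{N,k,\nu}=\bke{\tfrac{k-1}{k}W_{N,k,\nu}}^{1/(k-1)}\nabla W_{N,k,\nu}$ and $\norm{u_{N,k,\nu}-W_{N,k,\nu}}_{L^\infty}\to0$ are consistent with the paper. But the step you yourself flag is a genuine gap, not a technicality: your Aubin--Lions argument needs a uniform time-regularity estimate on $u_{N,k,\nu}$, and the only available one, $\partial_t\bar n_{N,k,\nu}\in L^2_{\rm loc}(0,T;H^{-1})$ from Corollary \ref{lem-3}, does not transfer, since $\partial_t p_{N,k,\nu}\sim k\,\bar n_{N,k,\nu}^{\,k-2}\,\partial_t\bar n_{N,k,\nu}$ carries an unbounded factor $k$ (and one must also invert the Brinkman operator uniformly in $\nu$). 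The paper explicitly identifies the lack of time compactness for $p_{N,k,\nu}$ as the central obstruction of the whole problem and does \emph{not} use Aubin--Lions here: strong $L^2_{\rm loc}$ convergence of $W_{N,k,\nu}$ is obtained from the strong convergence of the weighted gradient in Lemma \ref{lem-9}, which in turn comes from the energy-dissipation identity (Corollary \ref{lem-5}) and the monotonicity Lemma \ref{lem-8}, not from any time-compactness of $u$, $p$ or $W$.

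The second half of your plan also fails as written. Your local energy inequality leaves the term $-\iint\phi\,p_{N,k,\nu}\Delta W_{N,k,\nu}\,dxdt$, and with the available estimates its limit cannot be identified: there is no bound on $\nabla p_{N,k,\nu}$ to integrate by parts, and substituting $p_{N,k,\nu}=W_{N,k,\nu}-\nu\Delta W_{N,k,\nu}$ produces the nonnegative defect $\nu\iint\phi(\Delta W_{N,k,\nu})^2$, which is not known to vanish (this is precisely the point of the paper's closing Remark, where only a supersolution inequality survives via a defect measure); the bound \eqref{eq-bound-pDeltaW} gives uniform boundedness, not the value of the limit. Matching against $\iint t|\nabla p|^2=\iint tRp$ is also not legitimate at this stage: that identity is for the limit dynamics and presupposes the limiting equation for $p$, which is proved later (Lemma \ref{lem-p}) using Lemma \ref{lem-10} itself, so the argument would be circular. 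Finally, your route never confronts the real difficulty in upgrading $\nabla W_{N,k,\nu}$: the weight $\bke{\tfrac{k-1}{k}W_{N,k,\nu}}^{1/(k-1)}$ degenerates where $W_{N,k,\nu}$ is small, so strong convergence of the weighted gradient does not control $|\nabla W_{N,k,\nu}|^2$ there. The paper handles exactly this by splitting $K$ into $\{W_{N,k,\nu}>\beta\}$ and its complement, using a Moreau--Yosida approximation of $x\mapsto\bke{\tfrac{k}{k-1}}^{\frac{k-2}{k-1}}x^{\frac{k}{k-1}}$ on the first set and the entropy-dissipation bound of Lemma \ref{lem-6} with $z''=\chi_{[0,\beta]}$ on the second, then letting $\delta\to0$ and $\beta\to0$. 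Some version of this low-$W$ control is indispensable and is absent from your proposal.
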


\begin{proof}
By Lemma \ref{lem-9}, we have
\begin{equation}
\bke{\frac{k-1}{k}W_{N, k,\nu}}^{\frac{1}{k-1}} \nabla W_{N, k,\nu} \to \nabla p \text{ strongly in } L^2_{\mathrm{loc}}(0,T; L^2(\mathbb{R}^d)).
\end{equation}
This implies the strong convergence $\bke{\frac{k}{k-1}}^\frac{k-2}{k-1} W_{N, k,\nu}^\frac{k}{k-1} \to p$, and, therefore
\begin{equation}
W_{N, k,\nu} \to p \quad \text{strongly in }\ L^2_{\mathrm{loc}}(Q_T)
\end{equation}
Since $p_{N, k,\nu} = W_{N, k,\nu} - \nu \Delta W_{N, k,\nu}$ and $\nu \Delta W_{N, k,\nu} \to 0$ in $L^2_{\mathrm{loc}}$, it follows from the weak convergence results that
\begin{equation}
p_{N, k,\nu} \to p \quad \text{strongly in }\  L^2_{\mathrm{loc}}(Q_T).
\end{equation}

It remains to show that $\nabla W_{N, k,\nu} \to \nabla p$ strongly in $L^2_{\mathrm{loc}}(Q_T)$. Since we already have the weak convergence $\nabla W_{N, k,\nu} \rightharpoonup \nabla p$, it suffices to prove
\begin{equation}
\limsup_{k \to \infty,\, \nu \to 0} \iint_K \vert \nabla W_{N, k,\nu} \vert^2\, dxdt \leq \iint_K |\nabla p|^2\, dxdt
\end{equation}
for every compact set $K \subset Q_T$.

To establish this, fix $\delta > 0$ and $\beta > 0$. 
Define the \emph{Moreau--Yosida approximation} of $g(x) = \bke{\frac{k}{k-1}}^\frac{k-2}{k-1} x^\frac{k}{k-1}$ by
\begin{equation}
g_{k,\nu,\delta}(b) := \inf_{\theta \in \mathbb{R}} \bke{\bke{\frac{k}{k-1}}^\frac{k-2}{k-1} \theta ^\frac{k}{k-1} + \frac{1}{2\delta} |\theta - b|^2},
\end{equation}
and set
\begin{equation}
K_{\nu,\beta} := \{ (x,t) \in K : W_{N, k,\nu}(x,t) > \beta \}.
\end{equation}
On $K_{\nu,\beta}$ we have
\begin{equation}
\vert \nabla W_{N, k,\nu} \vert^2 \leq \bke{\frac{ \vert (\frac{(k-1)}{k}W_{N, k,\nu})^{\frac{1}{k-1}} \nabla W_{N, k,\nu} \vert}{g'_{k,\nu,\delta}(b)}}^2
\quad \text{a.e.}.
\end{equation}
Using this estimate and the strong convergence of $\bke{\frac{k}{k-1}}^\frac{k-2}{k-1} W_{N, k,\nu}^\frac{k}{k-1} \to p$, we obtain
\begin{align*}
&\limsup_{k \to \infty,\, \nu \to 0} \iint_K \vert \nabla W_{N, k,\nu} \vert^2\, dxdt\\
&\ \leq \limsup_{k \to \infty,\, \nu \to 0} \bkt{\iint_{K_{\nu,\beta}} \bke{\frac{\vert (\frac{k-1}{k}W_{N, k,\nu})^{\frac{1}{k-1}} \nabla W_{N, k,\nu} \vert}{g'_{k,\nu,\delta}(W_{N, k,\nu})}}^2 dxdt 
+ \iint_{K \setminus K_{\nu,\beta}} \abs{\bke{\frac{k-1}{k}W_{N, k,\nu}}^{\frac{1}{k-1}} \nabla W_{N, k,\nu} }^2 dxdt } \\
&\ = \iint_{\{(x,t)\in K :\, p(x,t) > \beta\}} \bke{\frac{\vert (\frac{k-1}{k}W_{N, k,\nu})^{\frac{1}{k-1}} \nabla W_{N, k,\nu} \vert}{g'_{\infty,0,\delta}(p)}}^2 dxdt
+ \limsup_{k \to \infty,\, \nu \to 0} \iint_{K \setminus K_{\nu,\beta}} \abs{\bke{\frac{k-1}{k}W_{N, k,\nu}}^{\frac{1}{k-1}} \nabla W_{N, k,\nu}}^2 dxdt.
\end{align*}
As $\delta \to 0$,
\begin{equation}
\left( \frac{1}{g'_{\infty,0,\delta}(p)} \right)^2 \to 1 \quad \text{a.e.},
\end{equation}
and so
\begin{equation}
\limsup_{k \to \infty,\, \nu \to 0} \iint_K \vert \nabla W_{N, k,\nu} \vert^2 \, dxdt
\leq \iint_K |\nabla p|^2\, dxdt + \limsup_{k \to \infty,\, \nu \to 0} \iint_{K \setminus K_{\nu,\beta}} \abs{\bke{\frac{k-1}{k}W_{N, k,\nu}}^{\frac{1}{k-1}} \nabla W_{N, k,\nu} }^2 dxdt.
\end{equation}

To estimate the remaining term, take a test function $\phi \in C_c^\infty(Q_T)$ such that $\phi = 1$ on $K$. 
Define $z_{\nu,\beta}'' := \chi_{[0,\beta]}$ and let $C$ be as in Lemma \ref{lem-6}. 
Then, by the entropy dissipation inequality,
\begin{equation}
\lim_{\beta \to 0} \limsup_{k \to \infty,\, \nu \to 0} \iint_{K \setminus K_{\nu,\beta}} \abs{\bke{\frac{k-1}{k}W_{N, k,\nu}}^{\frac{1}{k-1}} \nabla W_{N, k,\nu}}^2 dxdt
\leq \lim_{\beta \to 0} C \sup_{b \in [0,B_p]} z_{\nu,\beta}'(b) = 0.
\end{equation}
Therefore,
\begin{equation}
\limsup_{k \to \infty,\, \nu \to 0} \iint_K \vert \nabla W_{N, k,\nu} \vert^2\, dxdt \leq \iint_K \vert \nabla p \vert^2\, dxdt.
\end{equation}
Since we already had weak convergence, this inequality implies the strong convergence 
$\nabla W_{N, k,\nu} \to \nabla p$ in $L^2_{\mathrm{loc}}$. 

\end{proof}

Now, we can identify the weak limit of the term $n$.

\begin{lemma}\label{lem-nGp}
The term $\bar{n}_{N, k,\nu}(\sum_{i=1}^N F_{N,k,\nu}^{(i)} G_i(p_{N, k,\nu}))$ converges weakly in $L^2_{\rm loc}(Q_T)$ to $\int_0^1 n(x,t,a) G(p(x,t), a) da$ as $N \to \infty$, $k\to \infty$ and $\nu \to 0$.
\end{lemma}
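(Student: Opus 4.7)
The starting point is the algebraic identity $\bar n_{N,k,\nu}F_{N,k,\nu}^{(i)}=n_{N,k,\nu}^{(i)}/N$ (valid wherever $\bar n_{N,k,\nu}>0$), which recasts the quantity of interest as the uniform left Riemann sum
\EQN{
\bar n_{N,k,\nu}\sum_{i=1}^N F_{N,k,\nu}^{(i)}G_i(p_{N,k,\nu})=\frac{1}{N}\sum_{i=1}^N n_{N,k,\nu}\bke{x,t,\tfrac{i}{N}}G\bke{p_{N,k,\nu},\tfrac{i}{N}}
}
for the map $a\mapsto n_{N,k,\nu}(x,t,a)G(p_{N,k,\nu},a)$ on $[0,1]$. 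Given $\psi\in C_c^\infty(Q_T)$, the plan is to control the resulting error against $\psi$ in two separate steps: (I) a Riemann-sum error comparing the sum above to $\int_0^1 n_{N,k,\nu}(\cdot,\cdot,a)G(p_{N,k,\nu},a)\,da$ for each finite $N$, and (II) the passage $N\to\infty$, $k\to\infty$, $\nu\to0$ inside the averaged integral.

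For (I), on each subinterval $a\in[(i-1)/N,i/N]$ the integrand difference splits via the triangle inequality into $\|G\|_{L^\infty}|n_{N,k,\nu}(x,t,i/N)-n_{N,k,\nu}(x,t,a)|$ plus $n_{N,k,\nu}(x,t,a)\,\|\pd_a G\|_{L^\infty}/N$. Integrating the first piece in $x$ invokes Proposition \ref{prop-2} together with the stated $L^1$-Lipschitz-in-$a$ assumption on the initial datum to give a bound of order $1/N$ uniformly in $N,k,\nu$; the second piece is controlled by the uniform $L^1(Q_T)$ bound on $n_{N,k,\nu}(\cdot,\cdot,a)$ from Proposition \ref{prop-1}. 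Summing in $i$ and integrating against $\psi$, the total Riemann-sum error is $O(1/N)$ and vanishes in the triple limit.

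For (II), I would interchange the $(x,t)$ and $a$ integrals and apply dominated convergence in $a$. For each fixed $a\in[0,1]$, Lemma \ref{lem-10} together with the uniform $L^\infty$ bound on $p_{N,k,\nu}$ and the $C^1$ regularity of $G$ give $\psi\,G(p_{N,k,\nu},a)\to\psi\,G(p,a)$ strongly in $L^2(Q_T)$, while the uniform-in-$a$ weak convergence $n_{N,k,\nu}(\cdot,\cdot,a)\rightharpoonup n(\cdot,\cdot,a)$ in $L^2(Q_T)$ asserted in Theorem \ref{thm 2} yields
\EQN{
\iint_{Q_T}\psi\,n_{N,k,\nu}(x,t,a)G(p_{N,k,\nu},a)\,dxdt\;\longrightarrow\;\iint_{Q_T}\psi\,n(x,t,a)G(p,a)\,dxdt.
}
The $a$-uniform $L^\infty(Q_T)$ bound on $n_{N,k,\nu}(\cdot,\cdot,a)$ from Proposition \ref{prop-1}, combined with the compact support of $\psi$, provides the dominating constant required to exchange the limit with integration over $a\in[0,1]$. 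Finally, since $\bar n_{N,k,\nu}\sum_i F_{N,k,\nu}^{(i)}G_i(p_{N,k,\nu})$ is uniformly bounded in $L^\infty(Q_T)\subset L^2_{\rm loc}(Q_T)$, the distributional convergence against $C_c^\infty$ obtained above upgrades by density to weak convergence in $L^2_{\rm loc}(Q_T)$.

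\textbf{Main obstacle.} The delicate point is the uniformity in $a$: both the weak $L^2$ convergence $n_{N,k,\nu}(\cdot,\cdot,a)\rightharpoonup n(\cdot,\cdot,a)$ and the accompanying $L^\infty$ bound must hold uniformly in $a\in[0,1]$ to justify the dominated-convergence step. The equicontinuity of the family $\{n_{N,k,\nu}(\cdot,\cdot,a)\}$ in the trait variable, secured by Proposition \ref{prop-2} and the Lipschitz-in-$a$ initial datum, is exactly what makes this uniformity available, and is what prevents the triple limit from decoupling into iterated limits that would otherwise lose compactness.
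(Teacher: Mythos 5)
Your argument is correct, but your decomposition differs from the paper's. You first replace the discrete sum $\frac1N\sum_i n_{N,k,\nu}(\cdot,\cdot,\tfrac iN)G(p_{N,k,\nu},\tfrac iN)$ by the integral $\int_0^1 n_{N,k,\nu}(\cdot,\cdot,a)G(p_{N,k,\nu},a)\,da$ \emph{at the prelimit level}, with an $O(1/N)$ error uniform in $k,\nu$ coming directly from Proposition \ref{prop-2} and the Lipschitz-in-$a$ initial data, and only then pass to the triple limit inside the $a$-integral by pairing the weak $L^2$ convergence of $n_{N,k,\nu}(\cdot,\cdot,a)$ with the strong convergence $\psi G(p_{N,k,\nu},a)\to\psi G(p,a)$ (Lemma \ref{lem-10} plus the Lipschitz bound on $G$ in $p$ on the uniform $L^\infty$ range), closing with dominated convergence in $a$. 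The paper splits the other way: its term (a) compares the discrete sum with the same discrete sum of the \emph{limit} functions, invoking weak-$*$ convergence of $n_{N,k,\nu}(x,t,\cdot)G(p_{N,k,\nu},\cdot)$, and its term (b) is the Riemann-sum convergence for the limit integrand, for which the $a$-regularity of Proposition \ref{prop-2} must first be transferred to the limit via weak lower semicontinuity of the $L^1$ norm. Your route buys a quantitative, $(k,\nu)$-uniform Riemann error and avoids that transfer step, and it makes explicit the weak-times-strong mechanism that the paper compresses into a single convergence assertion; the paper's route confines all $(N,k,\nu)$-dependence to one term and needs regularity in $a$ only for the limit. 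Two small presentational points: the simultaneous-in-$a$ weak convergence you invoke should be cited from the compactness discussion preceding Lemma \ref{lem-8} (it follows from Corollary \ref{lem-3} and Proposition \ref{prop-2}) rather than from Theorem \ref{thm 2}, whose proof this lemma is part of; and the $L^1$, $L^\infty$ bounds of Proposition \ref{prop-1} are stated for $a=i/N$ but are used by you (as by the paper) for arbitrary $a\in[0,1]$ — the proof indeed applies verbatim, and it is worth saying so.
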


\begin{proof}
First, note by definition,
\begin{equation}
\bar{n}_{N, k,\nu}\bke{\sum_{i=1}^N F_{N,k,\nu}^{(i)} G_i(p_{N, k,\nu})} = \frac{1}{N}\sum_{i=1}^N n_{N, k,\nu}(x,t,\tfrac{i}{N}) G(p_{N, k,\nu}, \tfrac{i}{N})
\end{equation}
and our goal is to show that it converges weakly to
\begin{equation}
\int_0^1 n(x,t,a) G(p(x,t), a)\, da.
\end{equation}

To this end, let $\psi$ be any test function.
We split the difference into two terms:
\begin{equation}\label{eq-a}
\text{(a)} := \frac{1}{N} \sum_{i=1}^N \iint_{Q_T} \psi \left( n_{N, k,\nu}(x,t,\tfrac{i}{N}) G(p_{N, k,\nu}, \tfrac{i}{N}) - n(x,t,\frac{i}{N}) G(p, \tfrac{i}{N}) \right)  dxdt
\end{equation}
and
\begin{align}\label{eq-b}
\text{(b)} 
:=& \frac{1}{N} \sum_{i=1}^N \iint_{Q_T} \psi \left( n(x,t,\tfrac{i}{N}) G(p(x,t), \tfrac{i}{N}) - \int_0^1 n(x,t,a) G(p(x,t), a)  da \right)dxdt.
\end{align}

We first deal with the term (a) in \eqref{eq-a}. 
From the a priori estimates on $n_{N,k,\nu}$ in Proposition \ref{prop-1}, together with Lemma \ref{lem-10}, we know that $n_{N, k,\nu}(x,t,\cdot)G(p_{N,k,\nu},\cdot)$ converges weak-* in $L^\infty$ to $n(x,t,\cdot)G(p,\cdot)$ up to a subsequence.
Hence, each summand in \eqref{eq-a} converges uniformly to zero, and therefore (a) vanishes in the limit as $N \to \infty$, $k \to \infty$, and $\nu \to 0$.

For the term (b) in \eqref{eq-b},
note that the integrand
\begin{equation}
n(x,t,a) G(p(x,t), a) p(x,t)
\end{equation}
belongs to $L^1(Q_T \times [0,1])$, and its total variation in $a$ is uniformly bounded (by Proposition \ref{prop-2}, weak lower semi-continuity of the $L^1$ norm, and the assumption that $G$ is $C^1$).
Hence, the Riemann sum converges to the integral as $N \to \infty$, $k \to \infty$, and $\nu \to 0$.

Combining the two parts, we obtain
\begin{equation}
\begin{aligned}
 \lim_{N \to \infty,\, k \to \infty,\, \nu \to 0} \frac{1}{N} \sum_{i=1}^N \iint_{Q_T}& \psi\, n_{N, k,\nu}(x,t,\tfrac{i}{N}) G(p_{N, k,\nu}, \tfrac{i}{N})\, dxdt \\
& = \iint_{Q_T} \psi \int_0^1 n(x,t,a) G(p(x,t), a)\, da dxdt.
\end{aligned}
\end{equation}
Thus, the desired result follows.
\end{proof}

We finally identify an equation for the weak limit $p$.

\begin{lemma}\label{lem-p}
Let $p$ be the weak limit of $p_{N,k,\nu}$ as $N\to\infty$, $k\to\infty$ and $\nu\to0$.
Then $p$ is a weak solution of 
$$p\bke{\Delta p + \int_{0}^1 n(x,t,a)G(p,a) da} = 0,\ \text{ in $Q_T$}.$$
\end{lemma}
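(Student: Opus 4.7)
The plan is to pass to the limit in the approximate complementarity estimate from Lemma \ref{lem-p-weak} taken with $V=0$. For any test function $\psi\in C_c^\infty(Q_T)$, that lemma yields
\[
\int_{Q_T}\psi\,p_{N,k,\nu}\bke{\De W_{N,k,\nu}+\sum_{i=1}^N F_{N,k,\nu}^{(i)}G_i(p_{N,k,\nu})}\,dxdt = O\bke{\tfrac{1}{k}}\xrightarrow{k\to\infty}0,
\]
so the task reduces to identifying the limits of the two summands on the left and combining them.

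For the reaction term, I would use the factorization
\[
p_{N,k,\nu}\sum_{i=1}^N F_{N,k,\nu}^{(i)}G_i(p_{N,k,\nu})=\frac{p_{N,k,\nu}}{\bar{n}_{N,k,\nu}}\cdot\bke{\bar{n}_{N,k,\nu}\sum_{i=1}^N F_{N,k,\nu}^{(i)}G_i(p_{N,k,\nu})}.
\]
The ratio equals $(k/(k-1))^{1/(k-1)}p_{N,k,\nu}^{(k-2)/(k-1)}$ and converges to $p$ strongly in $L^2_{\mathrm{loc}}(Q_T)$: the prefactor and exponent both tend to $1$, and the uniform $L^\infty$ bound of Proposition \ref{prop-1} combined with the strong $L^2_{\mathrm{loc}}$ convergence $p_{N,k,\nu}\to p$ of Lemma \ref{lem-10} lets one pass via dominated convergence. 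Coupled with the weak $L^2_{\mathrm{loc}}$ convergence of the second factor to $\int_0^1 n(x,t,a)G(p,a)\,da$ from Lemma \ref{lem-nGp}, the product converges weakly in $L^1_{\mathrm{loc}}$ to $p\int_0^1 nG(p,a)\,da$, and hence the reaction term passes to $\int_{Q_T}\psi\,p\int_0^1 nG(p,a)\,da\,dxdt$.

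For the diffusion term, use the Brinkman identity $p_{N,k,\nu}=W_{N,k,\nu}-\nu\De W_{N,k,\nu}$ together with integration by parts to rewrite
\[
\int_{Q_T}\psi\,p_{N,k,\nu}\De W_{N,k,\nu}\,dxdt = \int_{Q_T}\frac{W_{N,k,\nu}^2}{2}\De\psi\,dxdt - \int_{Q_T}\psi|\nb W_{N,k,\nu}|^2\,dxdt - \nu\int_{Q_T}\psi(\De W_{N,k,\nu})^2\,dxdt.
\]
By the strong $L^2_{\mathrm{loc}}$ convergences $W_{N,k,\nu}\to p$ and $\nb W_{N,k,\nu}\to\nb p$ from Lemma \ref{lem-10}, the first two integrals converge to $\int_{Q_T}(p^2/2)\De\psi\,dxdt - \int_{Q_T}\psi|\nb p|^2\,dxdt$. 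Since $p\in L^\infty\cap H^1_{\mathrm{loc}}$, the chain-rule identity $p\De p:=\De(p^2/2)-|\nb p|^2$ defines $p\De p$ as a distribution, and these two limits together are precisely the distributional pairing $\int_{Q_T}\psi\,p\De p\,dxdt$.

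The principal obstacle will be showing that the corrector $\nu\int_{Q_T}\psi(\De W_{N,k,\nu})^2\,dxdt\to 0$. Writing $\nu(\De W)^2=(W-p_{N,k,\nu})^2/\nu$, this amounts to refining Lemma \ref{lem-2}'s bound $\|W_{N,k,\nu}-p_{N,k,\nu}\|_{L^2}^2=O(\nu)$ to $o(\nu)$ on the support of $\psi$. A Fourier-multiplier argument using $\widehat{W-p}(\xi)=-\nu|\xi|^2\hat{p}_{N,k,\nu}(\xi)/(1+\nu|\xi|^2)$ gives
\[
\frac{\|W_{N,k,\nu}-p_{N,k,\nu}\|_{L^2}^2}{\nu}=\int_{\R^d}\frac{\nu|\xi|^4}{(1+\nu|\xi|^2)^2}|\hat{p}_{N,k,\nu}(\xi)|^2\,d\xi,
\]
whose integrand vanishes pointwise as $\nu\to 0$ and is uniformly dominated by $|\xi|^2/4$; dominated convergence then yields the decay under a suitable local $H^1$ control of $p_{N,k,\nu}$, which I would obtain by differentiating \eqref{eq-2}, testing against $\nb p_{N,k,\nu}$ with a smooth cutoff, and invoking the a priori estimates of Section \ref{sec3}. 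Combining everything produces $\int_{Q_T}\psi\,p\bke{\De p+\int_0^1 nG(p,a)\,da}\,dxdt=0$ for every $\psi\in C_c^\infty(Q_T)$, which is the weak formulation of the claimed complementarity relation.
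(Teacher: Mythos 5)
Your treatment of the reaction term (the factorization $p_{N,k,\nu}\sum_i F^{(i)}_{N,k,\nu}G_i = \bke{\tfrac1N\sum_i n^{(i)}G_i}\cdot\bke{\tfrac{k}{k-1}}^{1/(k-1)}p_{N,k,\nu}^{(k-2)/(k-1)}$, strong convergence of the second factor plus Lemma \ref{lem-nGp}) is essentially the paper's argument. The diffusion term is where your route diverges and where there is a genuine gap: you attempt the triple limit directly on the Brinkman identity, and everything then hinges on showing $\nu\iint_{Q_T}\psi(\De W_{N,k,\nu})^2\,dxdt\to0$. Your proposed proof of this rests on a uniform (in $N,k,\nu$) local $H^1$ bound for $p_{N,k,\nu}$, which is not among the a priori estimates of Section \ref{sec3} and is not plausibly obtainable by ``differentiating \eqref{eq-2} and testing against $\nb p_{N,k,\nu}$'': the pressure equation carries the factor $(k-1)$ and is a transport equation in the rough velocity $\nb W_{N,k,\nu}$, so no $k$-uniform gradient estimate comes out of it — avoiding exactly this kind of pressure regularity is the stated point of the energy-evolution framework used here. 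Moreover, even if you granted a uniform $H^1$ bound, the dominated-convergence step in your Fourier argument is not valid as written, because $\hat p_{N,k,\nu}$ changes along the sequence: the bound $\tfrac{\nu|\xi|^4}{(1+\nu|\xi|^2)^2}\le\tfrac{|\xi|^2}{4}$ together with $\sup\int|\xi|^2|\hat p_{N,k,\nu}|^2<\infty$ only gives boundedness of $\nu\|\De W_{N,k,\nu}\|_{L^2}^2$, not its decay; you would additionally need equi-integrability of $|\xi|^2|\hat p_{N,k,\nu}|^2$ at high frequencies uniformly in the parameters (i.e.\ strictly more than a uniform $H^1$ bound). Without the corrector vanishing, the sign $-\nu\iint\psi(\De W)^2\le0$ yields, for $\psi\ge0$, only the supersolution inequality — this is precisely the content of the remark following the lemma in the paper, which notes that the direct triple-limit argument gives a weak supersolution via a reduced defect measure unless one proves $\nu(\De W_{N,k,\nu})^2\to0$ in $L^1$ or extra regularity of $\nb p_{N,k,\nu}$.

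The paper circumvents this obstacle by a sequential limit: with $N,k$ fixed it first sends $\nu\to0$, invoking \cite[Theorem 1.5]{DMS-JDE2025} so that the limit $p_{N,k,0}$ already satisfies a Darcy-type relation with $\De p_{N,k,0}$ in place of $\De W_{N,k,\nu}$ and no Brinkman corrector appears; the $O(1/k)$ complementarity estimate of Lemma \ref{lem-p-weak} survives this limit, and only then are $N,k\to\infty$ taken, using the strong convergences of Lemma \ref{lem-10} for the term $p\De p$ (via the same integration by parts you propose) and Lemma \ref{lem-nGp} for the reaction term. To repair your proof you should either adopt this sequential structure or supply an actual proof that $\nu(\De W_{N,k,\nu})^2\to0$ in $L^1_{\rm loc}$, which the currently available estimates do not give.
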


\begin{proof}
We proceed with a sequential limit argument to identify the equation satisfied by the limit function $p$.

First, fix $N,k < \infty$ and let $\nu \to 0$.
By \cite[Theorem 1.5]{DMS-JDE2025}, together with our estimates for $W_{N,k,\nu}$, $\nabla W_{N,k,\nu}$, and $\nu(\Delta W_{N,k,\nu})^2$ in Proposition \ref{prop-1}, Lemmas \ref{lem-1}–\ref{lem-2}, and direct computations, the limiting function $p_{N,k,0}$ satisfies
$$\partial_{t} p_{N,k,0} = \abs{\nabla p_{N,k,0}}^2 + (k-1)p_{N,k,0}\bke{\Delta p_{N,k,0} + \sum_{i=1}^N F_{N,k,\nu}^{(i)} G_i(p_{N,k,0})}.$$
Moreover, Lemma \ref{lem-p-weak} gives the uniform estimate: for all $V \geq 0$,
\begin{equation}\label{eq-Nk0}
    \abs{\iint_{Q_T} \psi (p_{N,k,0}-V)_{+}\bke{\Delta p_{N,k,0} + \sum_{i=1}^N F_{N,k,\nu}^{(i)} G_i(p_{N,k,0})} dxdt} = O\bke{\frac{1}{k}}.
\end{equation}
Set $V=0$ and let $N,k \to \infty$. 
For any test function $\psi$, we have
$$\lim_{N\to \infty,\, k\to \infty} \iint_{Q_T} \psi p_{N,k,0} \Delta p_{N,k,0} dxdt = \lim_{N\to \infty,\, k\to \infty} -\iint_{Q_T} \bke{\psi \abs{\nabla p_{N,k,0}}^2 + p_{N,k,0} \nabla \psi \cdot \nabla p_{N,k,0}} dxdt.$$
By Proposition \ref{prop-1}, Lemmas \ref{lem-1} and \ref{lem-10}, and the relation with $W_{N,k,\nu}$, this limit coincides with
\begin{equation}
    \begin{aligned}
        & \lim_{N\to \infty,\, k\to \infty} -\iint_{Q_T}\bke{\psi \abs{\nabla p_{N,k,0}}^2 + p_{N,k,0} \nabla \psi \cdot \nabla p_{N,k,0}} dxdt \\
        & = \lim_{N\to \infty,\, k\to \infty,\, \nu \to 0} -\iint_{Q_T} \bke{\psi \abs{\nabla W_{N,k,\nu}}^2 + W_{N,k,\nu} \nabla \psi \cdot \nabla W_{N,k,\nu} }dxdt,
    \end{aligned}
\end{equation}
which, after another integration by parts, yields the weak limit $p \Delta p$.

For the second term, consider
\begin{equation}
\begin{aligned}
 \iint_{Q_T}& \psi p_{N,k,\nu} \sum_{i=1}^NF_{N,k,\nu}^{(i)}G_i(p_{N,k,\nu}(x_{N,k,\nu}, t_{N,k,\nu}))\, dxdt \\
& = \iint_{Q_T} \psi  \left( \frac{1}{N}\sum_{i=1}^N n_{N, k,\nu}(x,t,\tfrac{i}{N}) G(p_{N, k,\nu}, \tfrac{i}{N}) \right) \bke{\frac{k}{k-1}}^{\frac{1}{k-1}}(p_{N,k,\nu}(x,t))^{\frac{k-2}{k-1}}\, dxdt.
\end{aligned}
\end{equation}
We aim to show that this converges to:
\begin{equation}
\iint_{Q_T} \psi p\int_{0}^1 n(x,t,a)G(p,a)\, da dxdt.
\end{equation}
By Lemma \ref{lem-10} and the momentum estimate of Lemma \ref{lem-n-tail}, the factor $\bke{\frac{k}{k-1}}^{\frac{1}{k-1}} p_{N,k,\nu}^{\frac{k-2}{k-1}}$ converges strongly in $L^1_{\mathrm{loc}}(Q_T)$ to $p$. 
Since $\psi$ has compact support and $\frac{1}{N}\sum_{i=1}^N n_{N, k,\nu}(x,t,\tfrac{i}{N}) G(p_{N, k,\nu}, \tfrac{i}{N})$ is uniformly bounded in $L^1(Q_T) \cap L^{\infty}(Q_T)$, we deduce that
$$\iint_{Q_T} \psi  \left( \frac{1}{N}\sum_{i=1}^N n_{N, k,\nu}(x,t,\tfrac{i}{N}) G(p_{N, k,\nu}, \tfrac{i}{N}) \right) \bkt{\bke{\frac{k}{k-1}}^{\frac{1}{k-1}}(p_{N,k,\nu}(x,t))^{\frac{k-2}{k-1}}-p(x,t)} dxdt \to 0.$$
Furthermore, by Lemma \ref{lem-nGp} and our estimates on $\nabla W_{N,k,\nu}$ (Lemma \ref{lem-1}), we may use $p\psi$ as a test function, and obtain
$$\iint_{Q_T} \psi p \left( \frac{1}{N}\sum_{i=1}^N n_{N, k,\nu}(x,t,\tfrac{i}{N}) G(p_{N, k,\nu}, \tfrac{i}{N}) \right) dxdt \to \iint_{Q_T} \psi  p\int_{0}^1 n(x,t,a)G(p,a)\, dadxdt.$$
This uniform weak convergence result guarantees that, when $N$ and $k \to \infty$,
$$\iint_{Q_T} \psi p_{N,k,0} \sum_{i=1}^N F_{N,k,\nu}^{(i)} G_i(p_{N,k,0})\, dxdt \to \iint_{Q_T} \psi p\int_{0}^1 n(x,t,a)G(p,a)\, da dxdt.$$
Now we take $V=0$ and pass to the limit in \eqref{eq-Nk0} to get
$$\iint_{Q_T} \psi p\bke{\Delta p + \int_{0}^1 n(x,t,a)G(p,a) da} dxdt = 0.$$
Hence $p$ is a weak solution of
$$p\bke{\Delta p + \int_{0}^1 n(x,t,a)G(p,a) da} = 0.$$
This is the desired result.
\end{proof}

\begin{remark}
We finally point out that if one attempts to identify the triple limit using only the uniform estimates, it is not possible to directly obtain a weak limit for $p_{N,k,\nu}\Delta W_{N,k,\nu}$ without either proving that $\nu(\Delta W_{N,k,\nu})^2 \to 0$ in $L^1(Q_T)$ or establishing additional regularity estimates on $\nabla p_{N,k,\nu}$. However, by employing the notion of a \emph{reduced defect measure} introduced in \cite{DiPerma1987}, we can still derive the following result.

\begin{quote}
Let $p$ be the weak limit of $p_{N,k,\nu}$ as $N\to\infty$, $k\to\infty$ and $\nu\to0$.
Then $p$ is a weak supersolution of 
$$p\bke{\Delta p+\int_{0}^1 n(x,t,a)G(p,a) da)}=0\ \text{ in $Q_T$}.$$
To be more specific, for any nonnegative test function $\psi$, we have
$$\iint_{Q_T} \psi p\bke{\Delta p+\int_{0}^1 n(x,t,a)G(p,a) da} dxdt\ge0.$$
\end{quote}

\begin{proof}
The argument again starts from Lemma \ref{lem-p-weak}, setting $V=0$. 
We obtain
\begin{equation}
    \begin{aligned}
         \iint_{Q_T}& \psi p_{N,k,\nu}\bke{\Delta W_{N,k,\nu} + \sum_{i=1}^N F_{N,k,\nu}^{(i)} G_i(p_{N,k,\nu})} dxdt \\
        & = \iint_{Q_T} \psi \bkt{W_{N,k,\nu}\Delta W_{N,k,\nu} - \nu(\Delta W_{N,k,\nu})^2 + p_{N,k,\nu}\sum_{i=1}^N F_{N,k,\nu}^{(i)} G_i(p_{N,k,\nu})} dxdt \\
        & = O\bke{\frac{1}{k}}.
    \end{aligned}
\end{equation}
Rearranging gives
\begin{equation}
        \iint_{Q_T} \psi \bkt{W_{N,k,\nu}\Delta W_{N,k,\nu} + p_{N,k,\nu}\sum_{i=1}^N F_{N,k,\nu}^{(i)} G_i(p_{N,k,\nu})} dxdt = O\bke{\frac{1}{k}} + \iint_{Q_T} \nu \psi (\Delta W_{N,k,\nu})^2\, dxdt.
\end{equation}
Passing to the limit, note that by definition $\nu (\Delta W_{N,k,\nu})^2$ converges weakly to a nonnegative defect measure $\mu$. Using the same identification argument as in Lemma \ref{lem-p}, and since $\psi \geq 0$, we conclude
$$\iint_{Q_T} \psi p\bke{\Delta p + \int_{0}^1 n(x,t,a)G(p,a) da} dxdt = \mu(\psi) \geq 0.$$
This establishes the claim.
\end{proof}
\end{remark}

\bigskip

\bibliographystyle{abbrv}

\end{document}